\DeclareMathOperator{\Aut}{Aut}
\DeclareMathOperator{\id}{id}
\DeclareMathOperator{\Ret}{Ret}
\DeclareMathOperator{\Sym}{Sym}
\DeclareMathOperator{\Hol}{Hol}
\DeclareMathOperator{\Stab}{Stab}
\numberwithin{equation}{section}
\numberwithin{figure}{section}
\numberwithin{table}{section}
\newtheorem{thm}{Theorem}[section]
\newtheorem*{thm*}{Theorem}
\newtheorem{lem}[thm]{Lemma}
\newtheorem{cor}[thm]{Corollary}
\newtheorem{pro}[thm]{Proposition}
\theoremstyle{definition} 
\newtheorem{defn}[thm]{Definition}
\newtheorem{exa}[thm]{Example}
\title[Solutions to the Pentagon Equation]{Bijective solutions to the Pentagon Equation}
\author{I. Colazzo}
\address{School of Mathematics, University of Leeds, Leeds, UK. ORCID: 0000-0002-2713-0409}
\email{I.Colazzo@leeds.ac.uk}
\author{J. Okni\'nski}
\address{Institute of Mathematics, University of Warsaw, Banacha 2, 02-097 Warsaw, Poland. ORCID: 0000-0002-2434-3425}
\email{okninski@mimuw.edu.pl}
\author{A. Van Antwerpen}
\address{Department of Mathematics and Statistics, National University of Ireland - Maynooth, Maynooth, Ireland. ORCID: 0000-0001-7619-6298}
\email{arne.vanantwerpen@mu.ie}
\subjclass[2020]{Primary: 81R12, 20M99, Secondary: 16T25, 20E22, 81R50 }
\keywords{Pentagon equation, Set-theoretic solution, Matched product of groups, Semigroup}
\begin{document}

\begin{abstract}
A complete classification of all finite bijective set-theoretic solutions $(S,s)$ to the Pentagon Equation is obtained. First, it is shown that every such solution determines a semigroup structure on the set $S$ that is the direct product $E\times G$ of a semigroup of left zeros $E$ and a group $G$. Next, we prove that this leads to a decomposition of the set $S$ as a Cartesian
product $X\times A\times G$, for some sets $X,A$ and to the discovery of a hidden group structure on $A$. Then an unexpected structure of a matched product of groups $A,G$ is found such that the solution $(S,s)$ can be explicitly described as a lift of a solution determined on the set $A\times G$ by this matched product of groups. Conversely, every matched product of groups leads to a family of solutions arising in this way. Moreover, a simple criterion for the isomorphism of two solutions is obtained. These results significantly extend those of Colazzo, Jespers, and Kubat, in their treatment of the involutive case.  Furthermore, connections to solutions to the Yang--Baxter Equation and to the theory of skew braces are uncovered. The latter motivate a further investigation of the relationship between solutions to the Yang--Baxter Equation and the Pentagon Equation at the set-theoretical level.
\end{abstract}
 
\maketitle

\section{Introduction}

A linear map $v : V \otimes V \to V \otimes V $ on the tensor square of a vector space $V$ is
a solution to the \emph{Pentagon Equation} if, on $V\otimes  V \otimes V$, one has $$v_{23}v_{13}v_{12} = v_{12}v_{23},$$ where each $v_{ij}$ is the map
applying $v$ to the $(i,j)$-component and acting as the identity on the remaining one. If $S$ is a nonempty set and $s\colon S\times S\to
S\times S$ is a map, then the pair $(S,s)$ is said to be a set-theoretic solution to the
Pentagon Equation if the equality
\[
s_{23}s_{13}s_{12}=s_{12}s_{23}
\]
holds on $S\times S\times S$, with the same notation agreement as above. Namely, if $\tau: S\times S\to S\times S$, $\tau(x,y)=(y,x)$ is the flip map then $s_{12}=s\times \id$,
$s_{13}=(\tau \times \id)(\id \times s)(\tau \times \id)$ and $s_{23}=\id \times s$. Clearly, every set-theoretic solution $(S,s)$ to the Pentagon Equation extends linearly to a unique solution $(V,v)$ on the vector space $V$ with basis $S$.

The Pentagon Equation has appeared in several areas of mathematics, especially in the theory of quantum groups and Hopf algebras, as well as in various contexts of mathematical physics. In particular, these include: operators on Hilbert spaces with applications to  duality results on compact groups \cite{BaSk93,BaSk98}, three dimensional integrable systems and the so called tetrahedron equation \cite{Ma94}, conformal field theory  \cite{MR1002038}, the Heisenberg double and classification problems for Hopf algebras \cite{Mi04}, Cuntz algebras  \cite{MR2679972}.
The Pentagon Equation also appears in low-dimensional topology in the construction of link invariants in the work of Kashaev \cite{zbMATH01628545}.
In \cite{MR2349626}, the set-theoretic version of the Pentagon Equation was applied to the so called symmetrically factorizable Lie groups, while in \cite{Ka2011} it  was considered in the context of the discrete Liouville equation. 
Moreover, nontrivial relations between the Pentagon Equation and another important equation of mathematical physics, the Yang--Baxter Equation, were indicated and investigated by several authors. In particular, in \cite{Ka96} it is shown that the Pentagon Equation plays the same role for the Heisenberg double as the Quantum Yang–Baxter Equation does for the Drinfeld double. The context of Hopf algebras and of the relations between the Yang--Baxter and Pentagon Equations were studied also in \cite{MR1273649}. 
Solutions to the Pentagon Equation also have appeared with different names. For example, a unitary operator on the tensor square of a Hilbert space is said to be multiplicative if it is a solution to the
Pentagon Equation \cite{BaSk93, MR1369908}; while in \cite{St98}, for a fixed braided monoidal category $A$, a map $V : A \otimes A \to A \otimes A$ is said to be a fusion operator if it satisfies the Pentagon
Equation.
Recently, set-theoretic solutions to the Pentagon Equation received a lot of attention. They have been approached via new methods, including methods of semigroup theory, with an effort to describe and classify certain special classes of such solutions, see   \cite{MR4152351,MR4170296,castelli2024commutative, Jiang2005}. 
In particular, Colazzo, Jespers and Kubat \cite{MR4170296} completely classified involutive set-theoretic solutions $(S,s)$ (meaning that $s^2 =\id$) to the Pentagon Equation, while Castelli \cite{castelli2024commutative} more recently obtained a description of some other special classes of bijective set-theoretic solutions.

In this paper we deal with arbitrary finite bijective set-theoretic solutions $(S,s)$ to the Pentagon Equation and we provide a complete classification of all such solutions. In particular, our results provide far-reaching extensions of those obtained in \cite{MR4170296} and  \cite{castelli2024commutative}. 

Surprisingly, matched products of groups play a crucial role in our main results. These are also known as Zappa--Sz\'{e}p products or bicrossed products of groups, and they arise as exact factorizations of groups, $G=AB$ with $A,B$ subgroups of $G$ and $A\cap B=\{1\}$. For a survey, with an emphasis on their role in the theory of the Yang--Baxter Equation, we refer to the paper of Takeuchi \cite{MR2024436}.

Our main theorem can be formulated as follows (see Section~\ref{sect_matched} for the definition of a matched pair of groups).

\begin{thm} \label{main}
     Let $(S,s)$ be a finite bijective set-theoretic solution to the Pentagon Equation. Then there
exists a matched pair of groups $(A,G,\sigma, \delta)$, a set $X$
and maps $\varphi_a\in \Sym(X)$, for all $a\in A$, such that $S$,
as a set, is the Cartesian product  $X\times A \times G$ and $s$
is of the following form
     \begin{align}\label{eq:star}\tag{$\star$}
         s((\alpha,a,x),(\beta,b,y))
         =((\alpha,a,xy),(\varphi_{b(\delta_x(a))^{-1}}\varphi_b(\beta),
b(\delta_x(a))^{-1},\sigma_{\delta_x(a)}(y)))
     \end{align}
     for $\alpha ,\beta \in X, a,b \in A$ and $x,y\in G$.
     Moreover, for any matched pair of groups $(A,G,\sigma, \delta)$, a
set $X$ and maps $\varphi_a\in \Sym(X)$, defined for all $a\in A$,
the map $s$ defined on $S= X\times A \times G$ in \eqref{eq:star}
is a bijective set-theoretic solution to the Pentagon Equation.
\end{thm}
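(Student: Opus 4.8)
The plan is to treat the two assertions of Theorem~\ref{main} separately, and I expect the real difficulty to lie entirely in the first (structural) half. For the direct half I would begin by turning the solution into a semigroup. Writing $s(u,v)=(u\cdot v,\theta_u(v))$, where $u\cdot v$ denotes the first coordinate of $s(u,v)$ and $\theta_u(v)$ the second, a direct comparison of the two sides of $s_{23}s_{13}s_{12}=s_{12}s_{23}$ on a triple $(u,v,w)$ yields, coordinate by coordinate, that $(S,\cdot)$ is associative together with the two cocycle-type identities
\begin{align*}
\theta_u(vw)&=\theta_u(v)\,\theta_{uv}(w),\\
\theta_{\theta_u(v)}\!\big(\theta_{uv}(w)\big)&=\theta_v(w).
\end{align*}
Thus every solution canonically produces a finite semigroup $(S,\cdot)$. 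I would then show, using finiteness together with the bijectivity of $s$, that $(S,\cdot)$ is a left group -- right cancellative and left simple; by the structure theory of completely simple semigroups this delivers the first claim of the abstract, a semigroup identification $S\cong E\times G$ with $E$ a band of left zeros and $G$ the maximal subgroup.

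The next, and decisive, step is to manufacture the set $X$, the group $A$ and the matched pair out of this data, and this is where I expect the main obstacle to be: the left-zero band $E$ carries no usable multiplicative information of its own, so the \emph{hidden} group structure must be extracted from the maps $\theta_u$ and from the way $s$ permutes the $E$-coordinate rather than from $\cdot$ itself. The idea is to use the $G$-action implicit in the solution to split $E\cong X\times A$, taking $A$ to be the part on which this action is (after the identifications) regular -- so that $A$ inherits a group law -- and $X$ a transversal on which only the residual permutations $\varphi_a$ act. The maps $\sigma_a\in\Sym(G)$ and $\delta_x\in\Sym(A)$ would then be \emph{read off} from the $G$- and $A$-components of $s$, and the two identities above, re-expressed in the new coordinates, must be shown to be \emph{exactly} the Zappa--Sz\'ep compatibility axioms of a matched pair $(A,G,\sigma,\delta)$ (cf.\ \cite{MR2024436}). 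Verifying that these identifications are consistent and that $s$, rewritten in the coordinates $(\alpha,a,x)\in X\times A\times G$, takes precisely the shape \eqref{eq:star} is the heart of the argument and generalises the involutive analysis of \cite{MR4170296}.

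For the converse half one starts from arbitrary data $(A,G,\sigma,\delta)$, $X$ and $\varphi_a\in\Sym(X)$ and must check two things: that $s$ in \eqref{eq:star} is bijective, and that it solves the Pentagon Equation. Bijectivity I would prove by exhibiting the inverse explicitly inside the Zappa--Sz\'ep product group $\mathcal{P}=A\bowtie G$, in which every element factors uniquely as $g\cdot a$ $(g\in G,\ a\in A)$ with rewriting rule $a x=\sigma_a(x)\,\delta_x(a)$. Given an output $\big((\alpha,a,p),(\beta^{*},b^{*},y^{*})\big)$, I factor the group element $a\,p\in\mathcal{P}$ as $g_0 a_0$ with $g_0\in G,\ a_0\in A$, and recover $x=\sigma_a^{-1}\!\big(g_0 (y^{*})^{-1}\big)$, then $y=x^{-1}p$, $b=b^{*}\,\delta_x(a)$ and $\beta=\varphi_b^{-1}\varphi_{b^{*}}^{-1}(\beta^{*})$; a short computation using uniqueness of factorization shows the constraint $\sigma_{\delta_x(a)}(y)=y^{*}$ then holds automatically, so this inverse is well defined. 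Since $\sigma_a$ is a bijection of $G$, these formulas are unambiguous, proving $s$ bijective.

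Finally, the Pentagon identity itself I would verify by tracking the three coordinates of $s_{23}s_{13}s_{12}(u,v,w)$ and $s_{12}s_{23}(u,v,w)$ independently. The $G$-coordinate identity reduces to associativity in $G$ twisted through $\sigma_a(xy)=\sigma_a(x)\,\sigma_{\delta_x(a)}(y)$; the $A$-coordinate identity reduces to the matched-pair axiom $\delta_x(ab)=\delta_{\sigma_b(x)}(a)\,\delta_x(b)$ together with the cocycle for $\delta$; and the $X$-coordinate identity collapses by telescoping of the consecutive factors $\varphi_{b c^{-1}}\varphi_b$, which is exactly why \emph{no} compatibility condition on the $\varphi_a$ is required. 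The only real labour in this half is bookkeeping -- organising the nested substitutions so that the matched-pair axioms can be applied in the correct order -- and I expect no conceptual obstruction here, in contrast to the extraction of the hidden group $A$, which remains the crux of the theorem.
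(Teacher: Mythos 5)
Your converse half is essentially the paper's own argument (Theorem~\ref{thm:matched} plus Proposition~\ref{pro:extension}): coordinate-wise verification of \eqref{PE1}--\eqref{PE3} using the matched-pair axioms, and bijectivity via an explicit inverse computed from unique factorization in $A\bowtie G$. One small correction there: the $X$-coordinate does not collapse by pure telescoping; one must first use \eqref{eq:deltaprod} in the form $\delta_y(b)=\delta_{\sigma_{\delta_x(a)}(y)}\bigl(b(\delta_x(a))^{-1}\bigr)\delta_{xy}(a)$ to see that the subscripts of the two $\varphi$-factors produced on the left-hand side of \eqref{PE3} coincide with those on the right-hand side, and only then do the middle factors cancel. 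Still, no condition on the $\varphi_a$ is needed, so your conclusion stands.

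The structural half, however, has a genuine gap: you explicitly defer the construction of the hidden group $A$ and of the matched pair (``this is where I expect the main obstacle to be'', ``the heart of the argument'', ``remains the crux''), and the mechanism you sketch --- split $E\cong X\times A$ by taking $A$ to be ``the part on which the implicit $G$-action is regular'' --- is not one from which a group law on $A$ can be extracted, nor do you indicate how. The paper's key idea, entirely absent from your proposal, is to use the \emph{inverse} of $s$: writing $s^{-1}(x,y)=(\psi_y(x),y\circ x)$, the map $t(x,y)=(x\circ y,\psi_x(y))$ is again a finite bijective solution, so the same set carries a second left-group structure $(S,\circ)$; the compatibility relations \eqref{Interplay1}--\eqref{Interplay4} between $(\cdot,\theta)$ and $(\circ,\psi)$ then show (for irretractable solutions) that $A=1\circ S$ equals $E(S,\cdot)$ and is a \emph{subgroup} of $(S,\circ)$, while $G=1\cdot S$ equals $E(S,\circ)$, and $\sigma,\delta$ are defined from $\theta$ and $\psi$ jointly (Lemma~\ref{lem:actions}, Theorem~\ref{thm:sol2mathc}). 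Your plan also omits two steps without which this cannot even be set up: the proof that every $\theta_x$ is a permutation (Proposition~\ref{bijective}, which needs a real argument involving the common kernel congruence $\sim$ and the subgroups $H_e$), and the retract construction --- the congruence $eg\approx fh$ iff $\theta_e=\theta_f$ and $g=h$ --- which is what canonically produces the factor $X=\ker\theta\cap E(S,\cdot)$, reduces the problem to irretractable solutions, and finally forces the $X$-component of $\theta$ to have the form $\varphi^{-1}_{cb^{-1}}\varphi_c$ by solving the functional equation $\eta_{ba^{-1},ca^{-1}}\eta_{a^{-1},c}=\eta_{b,c}$ (Theorem~\ref{thm:summary}). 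As it stands, your text is a correct statement of \emph{what} must be proved in the hard direction, not a proof of it.
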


There is also a natural notion of a homomorphism of set-theoretic solutions to
the Pentagon Equation (see Definition~\ref{def:iso}) and  the proof of Theorem~\ref{main} yields a transparent criterion for when two set-theoretic solutions are isomorphic. In fact, up to isomorphism the family of permutations $\{\varphi_a\}$ is not essential, i.e. every finite bijective solution is isomorphic to one of the form
\eqref{eq:star} with $\varphi_a=\id_X$ for all $a\in A$. In particular, up to isomorphism we may assume $\varphi_a=\id_X$ for all $a$, and then
\eqref{eq:star} becomes
\begin{align*}
         s((\alpha,a,x),(\beta,b,y))
         =((\alpha,a,xy),(\beta,
b(\delta_x(a))^{-1},\sigma_{\delta_x(a)}(y))).
\end{align*}
In addition, isomorphism classes of finite bijective solutions to the Pentagon Equation
are in a bijection with pairs consisting of a positive integer and an isomorphism class of
matched pairs of groups (see Corollary~\ref{cor:classif-iso}).
It is worth stressing that Theorem~\ref{main} seems quite surprising, since the analogous problem on (describing all) bijective set-theoretic solutions to the Yang--Baxter Equation has proved to be extremely difficult and remains wide open, see \cite{zbMATH07919707} and references therein.

% \vspace{-0.6em}

By convention, if $(S,s)$ is a set-theoretic solution to the Pentagon Equation, we write
$s(x,y)=(xy, \theta_x(y))$, for a map $S\times S\to S$,
$(x,y)\mapsto x\cdot y$ and certain maps $\theta_x\colon S\to S$,
for $x\in S$. It is well known that $(S,\cdot )$ is a semigroup,
\cite{MR1637789}.

The proof of our main theorem proceeds in several steps. The first step is crucial: we show that as a semigroup $S$ is a direct product $S= E\times G$ of a left zero semigroup $E$ (meaning that
$ef=e$ for every $e,f \in E$) and a group $G$; see Theorem~\ref{thm:leftgroup}.
Next in Proposition~\ref{bijective} we prove that
each map $\theta_{x}$ for $ x\in S$, is a permutation of $S$.
This allows us to exploit group-theoretic tools, which turn out to be central in the formulation and proof of our main results. 

In Section~\ref{sect3}, we introduce  a notion of retract $\Ret (S,s)$ (see Proposition~\ref{pro:retract}) defined via a congruence on the semigroup $(S,\cdot)$. This
retract again carries the structure of a set-theoretic solution to the Pentagon Equation, which we denote by
 $\Ret (S,s) = (\overline{S},\overline{s})$. Then using the bijectivity of $s$ we discover a dual bijective set-theoretic solution $(S,t)$, where $t=\tau s^{-1}\tau$, that defines another
semigroup structure $(S, \circ)$ on the set $S$. This yields a decomposition into a Cartesian product $S=X\times A\times G$
such that the underlying set of the retract $\overline{S}$ can be identified with $A\times G$ and
the set of idempotents $E$ of $S$ can be identified with $X\times A$.
Later, we show that the retract operator is idempotent, in other words $\Ret(\Ret(S,s))=\Ret(S,s)$, for any finite bijective set-theoretic solution to the Pentagon Equation $(S,s)$.
We emphasize that our notion of retract differs from the retraction for set-theoretic solutions to the Yang--Baxter Equation introduced in \cite{ESS}. In the Yang--Baxter setting, retraction is often iterated and serves as a measure of the complexity of the solution, whereas in the present setting the retract is a key structural ingredient and, in the finite bijective case, it allows one to reconstruct $(S,s)$ from $\Ret(S,s)$.
In Section~\ref{sect_matched} we focus on irretractable solutions $(S,s) =\Ret(S,s)=(\bar{S},\bar{s})$, so $S= A\times G$. Here we show that for an irretractable solution the dual solution $(S,t)$ endows $A$ with a group structure and  surprisingly switches the
roles of $A$ and $G$. This unravels a hidden
symmetry in the retract solution $(\overline{S},\overline{s})$ and leads to the construction of a matched
product of groups $A$ and $G$ in Theorem~\ref{thm:sol2mathc}. Conversely, every matched product of groups leads, in a natural way, to an irretractable solution to the Pentagon Equation, and it turns out to be responsible for the formula describing the set-theoretic solution
$(\overline{S},\overline{s})$, see Theorem~\ref{thm:matched}.

Finally, we show that the
original set-theoretic solution $(S,s)$ is a natural extension (lift) of its retract, so that the formula \eqref{eq:star} follows, see Theorem~\ref{thm:summary}.

Although every matched pair of groups leads
in a natural way to a set-theoretic solution to the Pentagon Equation, in Section~\ref{comments} we provide  certain concrete constructions that may be used to illustrate the main ideas and strategy used in the paper. 
In particular, in the finite case the description of all involutive set-theoretic solutions, obtained earlier in
\cite{MR4170296}, via a different approach, follows easily from
Theorem~\ref{main}.
Matched pairs of groups also appear naturally in the theory of set-theoretic solutions to the Yang--Baxter Equation, see \cite{MR2383056,MR2024436,Lu2000}. Moreover, braces and skew braces introduced by Rump in \cite{zbMATH05118810} and by Guarnieri--Vendramin in \cite{GuVe2017} in the context of set-theoretic solutions to the Yang--Baxter Equation, canonically determine a matched pair of groups (see \cite[Lemma~5.9]{MR3763907}). In the last section we show explicitly how skew braces also define irretractable set-theoretic solutions to the Pentagon Equation. Thus, our results open a new connection (at the set-theoretic level) between solutions to the Pentagon Equation and solutions to the Yang--Baxter Equation. 
Skew braces can also be described in terms of regular subgroups of holomorphs \cite{GuVe2017}. This provides a computationally efficient way of defining matched pairs of groups and creates also a bridge with Hopf-Galois structures \cite{zbMATH01014452}. For more details we refer to Section~\ref{comments}.
We conclude by relating set-theoretic solutions to the
Pentagon Equation to Hopf algebras with positive bases. In particular, we recall
that any finite set-theoretic solution to the Pentagon Equation gives rise to a Hopf algebra with a positive basis \cite{CJ26x}, and we explain why Hopf-algebraic classification results
do not directly recover the set-theoretic classification we obtained in Theorem~\ref{main}.

\section{Set-theoretic bijective solutions to the Pentagon Equation}
In this section we prove key results on the semigroup structures arising from finite set-theoretic bijective solutions to the Pentagon Equation.

\begin{defn}
    A pair $(S,s)$, where $S$ is a set and $r\colon S\times S\to S\times S$ is 
    said to be a \emph{set-theoretic solution to the Pentagon Equation} if
    the equality 
    \begin{equation}
        \label{eq:PE}    
        s_{23}s_{13}s_{12}=s_{12}s_{23}
    \end{equation}
    holds in $S\times S\times S$. 
\end{defn}

From now on, by \emph{a solution to the PE} we always mean a set-theoretic solution to the Pentagon Equation.

\begin{defn}\label{def:iso}
    Let $(S,s)$ and $(T,t)$ be solutions to the PE. We say that a map $f\colon S\to T$ 
    is a \emph{homomorphism of solutions} if 
    the diagram
    \[
    \begin{tikzcd}
    	{S\times S} & {T\times T} \\
    	{S\times S} & {T\times T}
    	\arrow["{f\times f}", from=1-1, to=1-2]
    	\arrow["s"', from=1-1, to=2-1]
    	\arrow["{f\times f}"', from=2-1, to=2-2]
    	\arrow["t", from=1-2, to=2-2]
    \end{tikzcd}
    \]
    commutes.
    The solutions $(S,s)$, $(T,t)$ are \emph{isomorphic} if there exists a bijective homomorphism of solutions $f:S\to T$.
\end{defn}

By convention, if $(S,s)$ is a solution to the PE, we write 
$s(x,y)=(xy, \theta_x(y))$, for a map 
$S\times S\to S$, $(x,y)\mapsto x\cdot y$, and maps 
$\theta_x\colon S\to S$ for $x,y\in S$. 

The following
lemma appears in \cite{MR1637789}. 

\begin{lem}
\label{lem:Kashaev}
    Let $S$ be a set and $s\colon S\times S\to S\times S$ be a map. Then 
    $(S,s)$ is a set-theoretic solution to the PE if and only if
    \begin{align}
        \label{PE1}x(yz)&=(xy)z,\\
        \label{PE2}\theta_x(y)\theta_{xy}(z)&=\theta_x(yz),\\
        \label{PE3}\theta_{\theta_x(y)}\theta_{xy}(z)&=\theta_y(z),
    \end{align}
    for all $x,y,z\in S$. 
    Moreover, $s$ is bijective if and only if for every $x,y\in S$ there exists a unique pair $(u,v)\in S\times S$
    such that $uv=x$ and $\theta_u(v)=y$. 
\end{lem}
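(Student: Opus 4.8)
The plan is to express everything through the binary operation $x\cdot y$ and the maps $\theta_x$, and then to prove the equivalence by evaluating both sides of \eqref{eq:PE} on an arbitrary triple $(x,y,z)\in S\times S\times S$ and comparing the three resulting coordinates. Since two maps $S\times S\times S\to S\times S\times S$ coincide if and only if they agree on every triple, this reduces the pentagon relation to a coordinatewise identity, and each coordinate will deliver exactly one of \eqref{PE1}, \eqref{PE2}, \eqref{PE3}.

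First I would compute the right-hand side $s_{12}s_{23}$. Applying $s_{23}=\id\times s$ to $(x,y,z)$ gives $(x,yz,\theta_y(z))$, and then applying $s_{12}=s\times\id$ produces $(x(yz),\theta_x(yz),\theta_y(z))$. Next I would compute the left-hand side $s_{23}s_{13}s_{12}$. Applying $s_{12}$ first yields $(xy,\theta_x(y),z)$. The only delicate step is the evaluation of $s_{13}$: using $s_{13}=(\tau\times\id)(\id\times s)(\tau\times\id)$, one tracks a general triple $(a,b,c)$ through the two flips and the middle application of $s$, obtaining $(a,b,c)\mapsto(b,a,c)\mapsto(b,ac,\theta_a(c))\mapsto(ac,b,\theta_a(c))$; specializing $a=xy$, $b=\theta_x(y)$, $c=z$ gives $((xy)z,\theta_x(y),\theta_{xy}(z))$. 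Finally, applying $s_{23}$ to this triple yields $((xy)z,\theta_x(y)\theta_{xy}(z),\theta_{\theta_x(y)}(\theta_{xy}(z)))$.

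Comparing the two triples coordinate by coordinate then gives the claimed equivalence in both directions at once: the first coordinates match if and only if $(xy)z=x(yz)$, which is \eqref{PE1}; the second coordinates match if and only if \eqref{PE2} holds; and the third coordinates match if and only if \eqref{PE3} holds. For the bijectivity statement I would simply observe that it is a reformulation of what it means for $s$ to be a bijection: by the definition $s(u,v)=(uv,\theta_u(v))$, an equation $s(u,v)=(x,y)$ is equivalent to the conjunction $uv=x$ and $\theta_u(v)=y$, so $s$ is bijective precisely when every $(x,y)\in S\times S$ admits a unique such preimage $(u,v)$.

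I expect the proof to be essentially routine, with the main obstacle being purely organizational: the correct handling of the two flip maps $\tau$ in the expansion of $s_{13}$, where a misplaced flip would scramble the indices in the second and third coordinates. Everything else is a direct substitution into the definitions, which is why the lemma is stated without an independent argument and attributed to \cite{MR1637789}.
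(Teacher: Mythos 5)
Your proof is correct and coincides with the computation the paper leaves implicit: the paper's entire proof of this lemma reads ``It is straightforward,'' with the statement attributed to \cite{MR1637789}. You expand both sides of the pentagon relation correctly (including the delicate $s_{13}=(\tau\times\id)(\id\times s)(\tau\times\id)$ step), and the coordinatewise comparison yielding \eqref{PE1}--\eqref{PE3}, together with the preimage reformulation of bijectivity, is exactly the intended argument.
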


\begin{proof}
    It is straightforward. 
\end{proof}

The first crucial step towards the classification will be to describe the structure of the semigroup $(S,\cdot)$. Namely, we will prove that if $(S,s)$ is a finite bijective solution to the PE then $S$ is a left group, i.e. $S$ is isomorphic to $E\times G$ where $E$ is a set, $G$ is a group, and the multiplication on $S$ is defined by 
\begin{align*}
    (e,g)(f,h)= (e,gh)
\end{align*}
for every $(e,g),(f,h)\in E\times G$.
We refer the reader to \cite{MR0132791} for the necessary background on semigroups. 

For any semigroup $S$ we denote with $E(S)$ the set of idempotent elements of $S$.

\begin{thm}\label{thm:leftgroup}
    Let $(S,s)$ be a finite bijective solution to the PE. Then $S$ is a left group.
\end{thm}

\begin{proof}
    By \cite[Proposition 2.2]{MR4170296}, if $(S,s)$ is a solution to the PE of finite order, then $S$ is a simple semigroup. This implies, in particular, that  if $(S,s)$ is bijective and $S$ is finite, $S$ is a completely simple semigroup (cf \cite[page 76]{MR0132791}).

    By Rees' theorem it follows that we may identify $S$ with a completely simple matrix semigroup $\mathcal{M}(G,I,\Lambda;P)$ with $G$ a finite group whenever $(S,s)$ is a finite bijective solution to the PE. We recall that, if $G$ is a group, $I$ and $\Lambda$ are non-empty sets, and $P$ is a matrix indexed by $I$ and $\Lambda$ with entries $p_{\lambda,i} \in G$ then the \emph{Rees matrix semigroup} $S=\mathcal{M}(G,I,\Lambda;P)$ is the set $I\times G \times \Lambda$ with multiplication given by
\begin{align*}
    (i, g, \lambda)(j, h, \mu) = (i, g p_{\lambda,j} h, \mu).
\end{align*}
In particular, the set of idempotent elements of $S$ is given by
\begin{align*}
    E(S)=\left\{(i,p_{\lambda i}^{-1},\lambda)\colon  i\in I, \lambda\in \Lambda \right\}.
\end{align*}

If $e=(i,p_{\lambda i}^{-1},\lambda)\in E(S)$, then the right ideal 
generated by $e$ is 
\[
eS =\{(i, g,\mu) \colon g\in G, \mu\in\Lambda\}
\]
and the left ideal generated by $e$ is 
\[
Se =\{(j, g,\lambda) \colon j \in I, g\in G\}.
\]
Let $m=|I|$ and $n=|\Lambda|$. Then  
$|eS| = |G||\Lambda| = n|G|$ and $|Se| = |I||G| = m|G|$. Now notice that for every $x=(i, g, \mu) \in eS$, there exist exactly $m$ elements $y\in Se$  such that $xy = e$, i.e. $y \in \{(j, p_{\mu,j}^{-1}g^{-1}p_{\lambda,i}^{-1},
\lambda) \colon j \in I\}$. Therefore we get $nm|G|$ different pairs $(x, y) \in S \times S$ such that $xy = e$.
Next, we use property \eqref{PE2} with $xy=e$, and then $y=ye$ and $x=ex$. So, in particular
\begin{align*}
    \theta_x(y) = \theta_x(ye) = \theta_x(y)\theta_{xy}(e) = \theta_x(y)\theta_e(e),
\end{align*}
and  $\theta_x(y)\mathcal{L}\theta_e(e)$, where as usual $\mathcal{L}$ denotes the Green's relation. It follows that $|\{\theta_x(y) : xy = e\}|\leq |S\theta_e(e)|= m|G|$. However, $|\{(x,y) \colon xy = e\}| = mn|G|$. Since $s$ is bijective, considering the set 
\begin{align*}
    s^{-1}(\{(xy,\theta_x(y)) : xy=e\}),
\end{align*}
we get that $mn|G|\leq m|G|$. Hence $n=1$ and the statement follows.
\end{proof}

The second crucial step is to describe important properties of the maps $\theta_x$, for $x\in S$. In particular, we will prove that all of them are bijective. We begin with some preparatory lemmas.

\begin{lem}\label{lem:idempotent_reduction}
Let $(S,s)$ be a finite bijective solution to the PE and set $E=E(S)$. Then
$\theta_x(E)\subseteq E$ for all $x\in S$. In particular, $s$ restricts to a
finite bijective solution on $E$ (since $E(S)$ is closed under multiplication).

Moreover, if $E(S)=S$ (equivalently $S$ is a left zero semigroup), then for
each $x\in S$ the map $\theta_x$ is bijective and is either the identity or a
fixed-point free permutation.
\end{lem}
\begin{proof}
    By Theorem~\ref{thm:leftgroup}, $S$ is a left group, so 
    every element in $E(S)$ is a right identity, i.e. $xe=x$ for all $x\in S$ and $e\in E(S)$. Let $x\in S$ and $e\in E(S)$. From \eqref{PE2} it follows that 
    \begin{align*}
        \theta_x(e) = \theta_x(e^2) = \theta_x(e)\theta_{xe}(e) 
        =\theta_x(e)\theta_x(e),
    \end{align*}
    i.e. $\theta_x(e)\in E(S)$.

    For the second statement see \cite[Lemma 2.4]{MR4170296}.
\end{proof}

\begin{lem} \label{right}
    Let $(S,s)$ be a finite bijective solution to the PE and let $e\in E(S)$. If $x\in eS$, then $\theta_y(x)\in \theta_y(e)S$.
    \begin{proof}
        By Lemma~\ref{lem:idempotent_reduction}, $\theta_y(e)\in E(S)$. Let $e\in E(S)$, $x \in eS$ and $y\in S$. By \eqref{PE2} and from $x=ex$ we get
        \begin{align*}
            \theta_y(x) = \theta_y(ex) = \theta_y(e)\theta_{ye}(x)
            = \theta_y(e)\theta_{y}(x),
        \end{align*}
        hence $\theta_y(x) \in \theta_y(e)S$.
    \end{proof}
\end{lem}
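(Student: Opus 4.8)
The plan is to extract everything from the fundamental identity \eqref{PE2}, which is the only axiom relating the semigroup product to the $\theta$-maps, together with the two structural facts already in hand: that $S$ is a left group (Proposition~\ref{pro:leftgroup}) and that $\theta_y$ carries idempotents to idempotents (Lemma~\ref{lem:idempotent}). The claim is that the right-ideal structure is respected by every $\theta_y$, so I expect the argument to hinge on rewriting $x$ so that \eqref{PE2} telescopes into a factorization of the form $\theta_y(x)=\theta_y(e)\cdot(\text{something})$.

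First I would translate the hypothesis $x\in eS$ into a usable equation. Since $e$ is idempotent, $e(ez)=(ee)z=ez$, so every element of $eS$ is fixed on the left by $e$; hence $x\in eS$ is equivalent to $ex=x$. This reformulation is where the completely simple / left group structure genuinely enters, as idempotents interact cleanly with left multiplication.

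Next I would substitute $x=ex$ into \eqref{PE2}. Applying $\theta_a(b)\theta_{ab}(c)=\theta_a(bc)$ with $a=y$, $b=e$, $c=x$ yields $\theta_y(e)\theta_{ye}(x)=\theta_y(ex)=\theta_y(x)$. Here I would invoke the left group property a second time: in a left group every idempotent is a right identity, so $ye=y$ and the middle subscript collapses, giving $\theta_y(e)\theta_y(x)=\theta_y(x)$. Reading this equality as a product exhibits $\theta_y(x)$ as an element of the right ideal generated by $\theta_y(e)$; since $\theta_y(e)\in E(S)$ by Lemma~\ref{lem:idempotent}, the conclusion $\theta_y(x)\in\theta_y(e)S$ follows immediately.

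The computation is short, so the main obstacle is conceptual rather than technical: the point is to recognize that the \emph{two} roles played by $e$ — as a left fixer of $x$ and as a right identity for $y$ — are exactly what make \eqref{PE2} degenerate into a single-term factorization. I anticipate no real difficulty beyond being careful that the identity $\theta_y(x)=\theta_y(e)\theta_y(x)$ does place $\theta_y(x)$ inside $\theta_y(e)S$, which it does, the right-hand side being manifestly a product with left factor $\theta_y(e)$.
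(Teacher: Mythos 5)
Your argument is correct and is essentially identical to the paper's proof: both rewrite $x=ex$, apply \eqref{PE2} to get $\theta_y(x)=\theta_y(e)\theta_{ye}(x)$, collapse the subscript via $ye=y$ (idempotents being right identities in the left group), and read off membership in $\theta_y(e)S$, with Lemma~\ref{lem:idempotent} supplying $\theta_y(e)\in E(S)$. No gaps.
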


For a map $\theta\colon S \rightarrow S$ let 
$\sim_{\theta}$ be the relation $\{(a,b)\in S\times S\mid \theta (a) = \theta (b)\}$.

\begin{lem}\label{lem:ker}
Let $(S,s)$ be a finite bijective solution to the PE. 
Then the following statements hold: 
\begin{enumerate}
    \item $\sim_{\theta_x}\ =\ \sim_{\theta_{y}}$, for any $x,y \in S$.
    \item $|\theta_x(S)|=|\theta_y(S)|$, for any $x,y\in S$.
    \item The relation $\sim \ =\ \sim_{\theta_x}$, for any $x$, is a left congruence on $S$.
\end{enumerate}
\begin{proof}
        Let us prove (1). Let $x,y\in S$. By equation \eqref{PE3}, 
    \begin{align}\label{star}
        \sim_{\theta_{xy}}\ \subseteq\ \sim_{\theta_y}.
    \end{align}
    Moreover, since $S$ is a left group, there exists $v$ such that $y = vxy$. It follows that $\sim_{\theta_y}\ =\ \sim_{\theta_{vxy}}\overset{\eqref{star}}{\subseteq} \sim_{\theta_{xy}} \overset{\eqref{star}}{\subseteq} \sim_{\theta_y}$. Therefore, $\sim_{\theta_{xy}}\ =\ \sim_{\theta_y}$. Finally, let $x'\in S$. Since $S$ is left simple, there exists $z$ such that $x'=zx$. Then $\sim_{\theta_{x'}}\ =\ \sim_{\theta_{zx}}\ =\ \sim_{\theta_{x}}$.
    
    We prove (2). Since $\sim_{\theta_x}$ is an equivalence relation, by (1) and the fact that $S$ is finite it follows that $|\theta_x(S)|=|\theta_{x'}(S)|$, for any $x,x'\in S$.
    
    We now prove (3). By (1), $\sim_{\theta_x}\ =\ \sim_{\theta_{y}}$, for every $x,y\in S$. We denote this relation simply with $\sim$.
    Let $a\sim b$ and $y\in S$. Then $\theta_{xy}(a) = \theta_{xy}(b)$ for any $x \in S$. It follows that $\theta_x(ya) \overset{\eqref{PE2}}{=}\theta_{x}(y)\theta_{xy}(a)=\theta_x(y)\theta_{xy}(b)\overset{\eqref{PE2}}{=}\theta_x(yb)$. Therefore $\sim$ is a left congruence.
\end{proof}
\end{lem}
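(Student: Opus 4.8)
The plan is to prove the three claims in order, with part~(1) carrying essentially all of the weight and parts~(2) and~(3) following quickly from it. For part~(1) the natural tool is equation~\eqref{PE3}, which I read as the assertion that $\theta_y(z)$ is recovered from $\theta_{xy}(z)$ by applying the single fixed map $\theta_{\theta_x(y)}$. Consequently, whenever $\theta_{xy}(a)=\theta_{xy}(b)$ we immediately obtain $\theta_y(a)=\theta_y(b)$; that is, $\sim_{\theta_{uw}}\ \subseteq\ \sim_{\theta_w}$ for all $u,w\in S$. This inclusion is the only direct consequence of the axioms, and the real task is to upgrade it to an equality of the relations indexed by arbitrary elements.

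The key device for the reverse inclusion is that $S$ is a left group by Proposition~\ref{pro:leftgroup}, hence left simple, so every element is a left multiple of any prescribed element. Choosing $v$ with $v(xy)=y$ and applying the inclusion above with $u=v$, $w=xy$ gives $\sim_{\theta_{v(xy)}}\ \subseteq\ \sim_{\theta_{xy}}$, i.e.\ $\sim_{\theta_y}\ \subseteq\ \sim_{\theta_{xy}}$; combined with $\sim_{\theta_{xy}}\ \subseteq\ \sim_{\theta_y}$ this yields $\sim_{\theta_{xy}}\ =\ \sim_{\theta_y}$. A second application of left simplicity---writing an arbitrary $x'$ as $x'=zx$ and using the product equality for $zx$---then removes the asymmetry between the indices and delivers $\sim_{\theta_{x'}}\ =\ \sim_{\theta_x}$ for all $x,x'$. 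I expect this two-step use of left simplicity to be the main obstacle: everything hinges on recognising that~\eqref{PE3} bounds the kernel of $\theta_y$ only above, by that of $\theta_{xy}$, and that absorbing $y$ into a product whose second factor is exactly $xy$ is what lets left simplicity reverse the bound.

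Once part~(1) is in hand, part~(2) is immediate: the map sending a $\sim_{\theta_x}$-class $[a]$ to $\theta_x(a)$ is a well-defined bijection $S/\!\sim_{\theta_x}\ \to\ \theta_x(S)$, so $|\theta_x(S)|$ equals the number of $\sim_{\theta_x}$-classes, and since all these relations coincide (and $S$ is finite) the class counts, hence the image sizes, agree for every $x$. For part~(3) I would denote the common relation by $\sim$ and check the left-congruence property directly. Given $a\sim b$ and $y\in S$, part~(1) lets me use $\theta_{xy}(a)=\theta_{xy}(b)$; substituting into~\eqref{PE2} gives $\theta_x(ya)=\theta_x(y)\theta_{xy}(a)=\theta_x(y)\theta_{xy}(b)=\theta_x(yb)$, whence $ya\sim yb$. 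This shows $\sim$ is preserved under left multiplication and completes the argument.
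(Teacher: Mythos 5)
Your argument is correct and follows essentially the same route as the paper: the inclusion $\sim_{\theta_{xy}}\ \subseteq\ \sim_{\theta_y}$ from \eqref{PE3}, reversed via left simplicity by writing $y=v(xy)$, then a second use of left simplicity to equate $\sim_{\theta_{x'}}$ and $\sim_{\theta_x}$, with (2) counted via equivalence classes and (3) checked directly from \eqref{PE2}. No substantive differences to report.
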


\begin{lem}\label{lem:subgroup}
    Let $(S,s)$ be a finite bijective solution to the PE. Let $e\in E(S)$.
    Then, there exists a subgroup $H_e$ of $G_e = eS$, such that for any $a,b\in G_e$,
\begin{align*}
    a\sim b \quad \Longleftrightarrow \quad aH_e=bH_e.
\end{align*}

\begin{proof}
   Define 
   \begin{align*}
       H_e=\left\{ a \in G_e \ |\  a\sim e\right\}.
   \end{align*}
   We claim that $H_e$ is a subgroup of $G_e$. Clearly $e\in H_e$. If $a,b\in H_e$, then for any $x\in S$ we have
   \begin{align}
       \label{eqa} \theta_x(a) = \theta_x(e)\\
       \label{eqb} \theta_x(b) = \theta_x(e).
   \end{align}
   Then 
   \begin{align}
       \theta_x(ab)\overset{\eqref{PE2}}{=} \theta_x(a)\theta_{xa}(b) \overset{\eqref{eqa}\eqref{eqb}}{=}\theta_x(e)\theta_{xa}(e) = \theta_x(e)
   \end{align}
   where the last equality holds since, by Lemma~\ref{lem:idempotent_reduction}, $\theta_x(e),\theta_{xa}(e)\in E(S)$ and any idempotent element in $S$ is a right identity.

   Now, let $a\in H_e$ and denote by $\bar{a}$ the unique element in $G_e$ such that $a\bar{a} = e = \bar{a}a$. It follows that, for any $x\in S$
   \begin{align*}
       \theta_x(e) = \theta_x(\bar{a}a) = \theta_x(\bar{a}) \theta_{x\bar{a}}(a) = \theta_x(\bar{a}) \theta_{x\bar{a}}(e) = \theta_x(\bar{a}),
   \end{align*}
   where the last equality holds because $\theta_{x\bar{a}}(e)\in E(S)$ and so it is a right identity. It follows that $\bar{a}\in H_e$, so $H_e$ is a subgroup of $G_e$. The assertion follows from Lemma~\ref{lem:ker}.
\end{proof}
\end{lem}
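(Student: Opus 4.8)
The plan is to exploit the left-group decomposition $S=E\times G$ furnished by Proposition~\ref{pro:leftgroup} together with the key structural fact from Lemma~\ref{lem:ker}: the relation $\sim$ is a single left congruence on $S$, independent of the chosen map $\theta_x$. The first step I would carry out is to record that $G_e=eS$ is genuinely a group and to identify its identity. Writing $e=(e_0,1)$ in the decomposition (the idempotents of $E\times G$ are exactly the elements $(f,1)$), one has $eS=\{(e_0,g)\colon g\in G\}$, which is closed under the multiplication of $S$, has $e$ as a two-sided identity, and in which $(e_0,g)$ has inverse $(e_0,g^{-1})$. Thus $G_e$ is a subgroup of $S$ isomorphic to $G$, and this is the setting in which the statement is to be read.

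The second step is to observe that the restriction of $\sim$ to $G_e$ is a left congruence on the \emph{group} $G_e$: since $G_e$ is closed under multiplication, for $a,b,c\in G_e$ the implication $a\sim b\Rightarrow ca\sim cb$ is inherited directly from Lemma~\ref{lem:ker}(3). This reduces the lemma to the standard correspondence between left congruences on a group and its subgroups. Concretely, I would set $H_e=\{a\in G_e\colon a\sim e\}$, the $\sim$-class of the identity $e$, and verify that it is a subgroup using only the congruence property: $e\in H_e$ is immediate; if $a\sim e$ then left-multiplying by $a^{-1}$ gives $e\sim a^{-1}$, so $a^{-1}\in H_e$; and if $a\sim e$ and $b\sim e$ then left-multiplying $b\sim e$ by $a$ yields $ab\sim a\sim e$, so $ab\in H_e$.

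The third step is the coset description. For $a,b\in G_e$, left-multiplying $a\sim b$ by $a^{-1}$ gives $e\sim a^{-1}b$, that is $a^{-1}b\in H_e$; conversely, if $a^{-1}b\in H_e$ then left-multiplying $a^{-1}b\sim e$ by $a$ gives $b\sim a$. Hence $a\sim b\iff a^{-1}b\in H_e\iff aH_e=bH_e$, which is exactly the assertion.

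I do not expect a serious obstacle here; the one point that genuinely requires care is tracking the handedness of the correspondence. A left congruence ($a\sim b\Rightarrow ca\sim cb$) produces left cosets $aH_e$, and performing the multiplications on the wrong side would yield the wrong coset relation; one must also confirm that $e$ is the identity of $G_e$ so that left-multiplication by $a^{-1}$ stays inside $G_e$. An alternative, more computational route would bypass the congruence language and verify closure and inverses of $H_e$ directly from \eqref{PE2}, using that each $\theta_x(e)$ is idempotent by Lemma~\ref{lem:idempotent} and hence a right identity; I would keep this in reserve in case a fully self-contained argument is preferred, but the congruence-based proof seems cleanest.
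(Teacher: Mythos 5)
Your proof is correct, and it takes a slightly but genuinely different route from the paper's. Both arguments define the same subgroup $H_e=\{a\in G_e\mid a\sim e\}$ and both ultimately invoke Lemma~\ref{lem:ker} for the coset description, but they diverge in how the subgroup axioms are verified. You reduce everything to the fact that $\sim$ is a left congruence on $S$ (Lemma~\ref{lem:ker}(3)): since $G_e=eS$ is a subgroup of $S$ with identity $e$, the restriction of $\sim$ to $G_e$ is a left congruence on a group, and the class of the identity under such a congruence is automatically a subgroup with $a\sim b\iff aH_e=bH_e$. The paper instead verifies closure under products and inverses by direct computation from \eqref{PE2}, using Lemma~\ref{lem:idempotent} and the fact that every idempotent of a left group is a right identity — essentially the ``computational route'' you mention keeping in reserve. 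Your version is more economical and makes the group-theoretic content transparent (it is exactly the classical correspondence between left congruences on a group and left cosets of a subgroup), at the cost of leaning entirely on Lemma~\ref{lem:ker}(3); the paper's version re-derives the needed facts from the Pentagon identities and so is self-contained modulo Lemma~\ref{lem:idempotent}. Your attention to the handedness issue — that a left congruence $a\sim b\Rightarrow ca\sim cb$ yields the coset relation $aH_e=bH_e$ via $a^{-1}b\in H_e$, and that $e$ really is the identity of $G_e$ so that left multiplication by $a^{-1}$ stays inside $G_e$ — addresses the only point where such an argument could go wrong, and it is handled correctly.
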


We are now ready to derive a key structural result that will clearly indicate that group theoretical methods must naturally appear when dealing with finite bijective solutions to the PE.

\begin{pro} \label{bijective}
    If $(S,s)$ is a finite bijective solution to the PE, then
    $\theta_{x}$ is bijective for every $x\in S$. 
\end{pro}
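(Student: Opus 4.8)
The plan is to reduce the statement to a triviality assertion about the subgroups $H_e$ produced in Lemma~\ref{lem:subgroup}, and then to kill these subgroups by a counting argument based on the bijectivity of $s$. Since $S$ is finite, each $\theta_x$ is bijective if and only if it is injective, and by Lemma~\ref{lem:ker}(1) the relation $\sim_{\theta_x}$ equals one fixed relation $\sim$ for every $x$, so either all $\theta_x$ are injective or none are; injectivity is equivalent to $\sim$ being trivial. By Lemma~\ref{lem:subgroup}, on each group $G_e = eS$ the relation $\sim$ coincides with the partition into cosets of $H_e$, so triviality of $\sim$ is exactly the assertion that $H_e=\{e\}$ for every $e\in E(S)$. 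Thus the whole proposition reduces to showing that each $H_e$ is trivial.

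First I would record the block structure of the maps $\theta_x$. Applying Lemma~\ref{left zero} to the solution induced on $E(S)$ (which is a left zero semigroup, and is a solution by Lemma~\ref{lem:idempotent}), together with Lemma~\ref{lem:ker}(1), shows that every $\theta_x$ restricts to a permutation $\tau_x$ of $E(S)$. By Lemma~\ref{right}, $\theta_x$ maps $G_e=eS$ into $G_{\tau_x(e)}$; since $\tau_x$ is injective on idempotents, distinct blocks $G_e$ land in distinct blocks, so every $\sim$-class is contained in a single $G_e$. Consequently, for an idempotent $e^*=\tau_x(e)$ the fibre $\theta_x^{-1}(e^*)$ is precisely the $\sim$-class of $e$, which by Lemma~\ref{lem:subgroup} equals $H_e$ and has cardinality $|H_e|$.

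The main step---and the one I expect to be the real obstacle---is to prove $|H_e|=1$ for all $e$, and for this I would exploit bijectivity through a double count. By Lemma~\ref{lem:Kashaev}, bijectivity of $s$ means that $(u,v)\mapsto(uv,\theta_u(v))$ is a bijection of $S\times S$; taking the preimage of $S\times\{y\}$ gives $|\{(u,v)\in S\times S : \theta_u(v)=y\}|=|S|$ for every $y\in S$. I would specialize $y$ to an idempotent $e^*$, rewrite the left-hand side as $\sum_{u\in S}|\theta_u^{-1}(e^*)|=\sum_{u\in S}|H_{\tau_u^{-1}(e^*)}|$ using the previous paragraph, and then sum over all $e^*\in E(S)$. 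Swapping the order of summation and using that $e^*\mapsto\tau_u^{-1}(e^*)$ is a bijection of $E(S)$ for each fixed $u$, the total becomes $|S|\sum_{e\in E(S)}|H_e|$, while the other side is $|E(S)|\,|S|$. Hence $\sum_{e\in E(S)}|H_e|=|E(S)|$, and since each summand is at least $1$ and there are exactly $|E(S)|$ of them, every $|H_e|$ equals $1$. This forces $\sim$ to be trivial, so each $\theta_x$ is injective on every block $G_e$ and therefore bijective. The delicate points are obtaining the block decomposition cleanly, so that fibres over idempotents are honestly single cosets $H_e$, and the bookkeeping in the double count; everything else is formal.
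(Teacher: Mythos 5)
Your argument is correct, and it reaches the paper's own reduction (triviality of the subgroups $H_e$ from Lemma~\ref{lem:subgroup}) by a genuinely different mechanism. The shared part is the setup: the common congruence $\sim$, the coset description of its classes inside each $G_e$, and the fact that every $\theta_x$ permutes $E(S)$ and maps each block $G_e$ into $G_{\theta_x(e)}$ (both you and the paper lean on Lemma~\ref{left zero} for the restriction to $E(S)$ and on Lemma~\ref{right} for the block behaviour, so you are on the same footing there). Where you diverge is in how bijectivity of $s$ is exploited. The paper uses \eqref{PE3} together with the equal-cardinality statement of Lemma~\ref{lem:ker}(2) to show that $\theta_{\theta_x(y)}$ restricted to $\theta_{xy}(S)$ is a bijection, deduces $\theta_{xy}(S)\cap H_e=\{\theta_{xy}(f)\}$, and concludes that a non-identity element of $H_e$ would lie in no image $\theta_z(S)$, contradicting $S=\bigcup_z\theta_z(S)$; this is a qualitative, image-sided argument. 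You instead work with fibres: the identification $\theta_u^{-1}(e^*)=H_{\tau_u^{-1}(e^*)}$ plus the exact count $|s^{-1}(S\times\{e^*\})|=|S|$ yields $\sum_{e\in E(S)}|H_e|=|E(S)|$ and hence $|H_e|=1$. Your route buys two small things: it bypasses the ``bijective on images'' step entirely (so \eqref{PE3} is needed only through Lemma~\ref{lem:ker}), and the symmetrization over all idempotents means you never need the paper's implicit observation that all $H_e$ have the same size. The cost is the extra bookkeeping needed to confine each $\sim$-class to a single block so that the fibres over idempotents are honestly the subgroups $H_e$; you handle that correctly via injectivity of $\tau_u$ on $E(S)$.
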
 

\begin{proof}
    Since $(S,s)$ is a finite bijective solution to the PE, by Theorem~\ref{thm:leftgroup} $S$ is a left group, i.e.,  
    $S=E\times G$, where $E=E(S)$ and $G$ is a group. We write $G_e=eG$ for $e\in E$. By Lemma~\ref{lem:ker}(1), if $x,y\in S$ then $\theta_{\theta_{x}(y)}$, $\theta_{xy}$ and $ \theta_y$ determine the same congruence $\sim$. 
    Moreover by \eqref{PE3} we have the inclusion of images $ \theta_y(S) \subseteq \theta_{\theta_{x}(y)}(S)$. 
    By Lemma~\ref{lem:ker}(2), since $S$ is finite, we get $ \theta_y(S) = \theta_{\theta_{x}(y)}(S)$ and also $|\theta_{y}(S)| = |\theta_{xy}(S)|$. 
    Applying Equation~\eqref{PE3} yields that the restriction of $\theta_{\theta_{x}(y)}$ to $\theta_{xy}(S)$ is bijective.

    On the other hand, the relation $\sim\ =\ \sim_{\theta_x}$, for every $x\in S$, is a left congruence on $S$ by Lemma~\ref{lem:ker}(3). 
    And by Lemma~\ref{lem:subgroup}, for any $e\in E$ we can define a group $H_e=\left\{ a \in G_e \ |\  a\sim e \right\}$ such 
    that $a,b\in G_e$ and $a\sim b$ if and only if $aH_e=bH_e$. Moreover $fH_e = H_f$ for every $e,f\in E$.
    Now, since $\theta_{\theta_{x}(y)}$ is bijective when restricted to $\theta_{xy}(S)$, we get that for every $e\in E(S)$
    \begin{equation} \label{ineq}
        |\theta_{xy}(S)\cap H_e|\leq 1
    \end{equation}
But using Lemma~\ref{lem:idempotent_reduction} we have $\theta_{xy}(f)S=G_{\theta_{xy}(f)}$ and $e=\theta_{xy}(f)\in \theta_{xy}(G_f)$ is the identity of the group $G_{\theta_{xy}(f)}$. Therefore  
$\theta_{xy}(S)\cap H_e = \{ \theta_{xy}(f) \}$.
We know also that every $\theta_{e'}$, $e'\in E(S)$, permutes the set $E(S)$ (by Lemma~\ref{lem:idempotent_reduction}) and $\theta_{e'}$ maps every $G_f$ into some $G_{f'}$. 
Suppose that $\theta_z, z\in S$, are not bijections.
Then it follows that (every) $H_e$ must be a nontrivial subgroup in $G_e$. Therefore \eqref{ineq} implies that every nonidentity element $h$ of the group $H_{\theta_{xy}(f)}$ is not in the image of $\theta_{xy}$, for all $x,y\in S$. So $h\notin \bigcup_{z\in S}\theta_{z}(S)$. However,  bijectivity of the solution $s$ implies that $S= \bigcup_{z\in S}\theta_{z}(S)$. This contradiction proves that every $\theta_y$ must be bijective.
\end{proof}

Our final results in this section highlight the role of the maps $\theta_e$ determined by the idempotents $e$ of the semigroup $S$. 
\begin{pro}\label{pro:subgroupandonlyidempotent}
    Let $(S,s)$ be a finite bijective solution to the PE. Then, for every $x \in S$ there exists an $e \in E(S)$ such that $\theta_x = \theta_e$. Moreover, the set $\theta_E=\left\lbrace \theta_e \mid e \in E(S)\right\rbrace$ is a group with identity $\theta_{\theta_{e}(e)}$ independent of the choice of $e\in E(S)$.
\end{pro}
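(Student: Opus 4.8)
The plan is to prove the assertions in reverse order: first establish the \emph{moreover} part, namely that $\theta_E$ is a group, and then deduce from this that every $\theta_x$ already lies in $\theta_E$. Throughout I would use the structure already available: by Proposition~\ref{pro:leftgroup} $S=E\times G$ is a left group, so $E=E(S)$ is a left zero semigroup (hence $ef=e$ for all idempotents) and every idempotent is a right identity ($xe=x$); by Proposition~\ref{bijective} each $\theta_x$ is a bijection of $S$; and by Lemma~\ref{lem:idempotent} $\theta_x(E(S))\subseteq E(S)$.

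The first step is to extract from \eqref{PE3} the key identity, valid for all idempotents $e,f\in E(S)$,
\begin{equation}\label{eq:idemPE3}
    \theta_{\theta_e(f)}\circ \theta_e=\theta_f ,
\end{equation}
obtained by setting $x=e$, $y=f$ and using $ef=e$; here $\theta_e(f)\in E(S)$ by Lemma~\ref{lem:idempotent}. Since $\theta_e$ is a bijection of $S$ carrying $E(S)$ into $E(S)$, its restriction to the finite set $E(S)$ is a permutation. Hence for any two idempotents $p,q$ there is an idempotent $f$ with $\theta_q(f)=p$, and \eqref{eq:idemPE3} with $e=q$ gives $\theta_p\circ\theta_q=\theta_{\theta_q(f)}\circ\theta_q=\theta_f\in\theta_E$. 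This shows that $\theta_E$ is closed under composition.

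Consequently $\theta_E$ is a finite subsemigroup of the group $\Sym(S)$, and a finite subsemigroup of a group is a subgroup: for $\theta_e\in\theta_E$ the powers stay in $\theta_E$, so by finiteness $\theta_e^{\,m}=\theta_e^{\,n}$ for some $m<n$, and cancelling in $\Sym(S)$ places both the identity and $\theta_e^{-1}$ in $\theta_E$. To identify the unit, I would put $f=e$ in \eqref{eq:idemPE3}, obtaining $\theta_{\theta_e(e)}\circ\theta_e=\theta_e$; cancelling $\theta_e$ in the group $\theta_E$ shows that $\theta_{\theta_e(e)}$ is the identity of $\theta_E$, and in particular is independent of $e$. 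Finally, for the first assertion I would specialize \eqref{PE3} again, taking $y=e$ idempotent and $x\in S$ arbitrary: using $xe=x$ gives $\theta_{\theta_x(e)}\circ\theta_x=\theta_e$ with $\theta_x(e)\in E(S)$, so $\theta_x=(\theta_{\theta_x(e)})^{-1}\circ\theta_e\in\theta_E$, i.e.\ $\theta_x=\theta_{e'}$ for some $e'\in E(S)$.

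I expect the closure step to be the main obstacle: it is the one point where a nontrivial consequence of the Pentagon Equation must be combined with the bijectivity results, and in particular one must exploit that $\theta_e$ permutes the idempotents in order to realize an arbitrary composition $\theta_p\circ\theta_q$ as a single $\theta_f$. Once closure is in hand, the group structure follows from the finiteness argument with no further calculation, and both conclusions of the proposition then drop out from the two specializations of \eqref{PE3} above.
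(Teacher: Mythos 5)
Your proposal is correct and follows essentially the same route as the paper: both prove the \emph{moreover} part first by specializing \eqref{PE3} to pairs of idempotents (using $ef=e$ and that the bijection $\theta_e$ permutes the finite set $E(S)$) to get closure of $\theta_E$, conclude it is a group by finiteness, identify the unit via $\theta_{\theta_e(e)}\theta_e=\theta_e$, and then obtain the first claim from $\theta_{\theta_x(e)}\theta_{x}=\theta_e$ with $xe=x$. The only cosmetic difference is that the paper cancels the bijection $\theta_e$ in $\Sym(S)$ to get the slightly stronger conclusion $\theta_{\theta_e(e)}=\id_S$ outright, which your argument also yields.
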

\begin{proof}
We start by proving the latter claim. Let $e\in E(S)$. Then, by relation \eqref{PE3} $$ \theta_{\theta_e(e)}\theta_{ee} = \theta_e,$$ By Proposition~\ref{bijective}, $\theta_e$ is bijective, which shows that 
$\theta_{\theta_{e}(e)}=\id_S$. Let $e,f \in E(S)$. Then, by Lemma~\ref{lem:idempotent_reduction}), 
$E(S)$ is closed under $\theta_e$, thus there is a $f_2 \in E(S)$ with $\theta_e(f_2)=f$. Hence, as $ef_2=e$, we get $$ \theta_f\theta_e=\theta_{\theta_e(f_2)}\theta_{ef_2}
\overset{\eqref{PE3}}{=}
\theta_{f_2},$$ which shows that the set $\theta_E$ is a submonoid of $\Sym(S)$. As the latter is finite, $\theta_E$ is a subgroup.

Let $x \in S$ and let $e \in E(S)$. Then by \eqref{PE3}
$$ \theta_{\theta_x(e)}\theta_{xe} = \theta_e,$$ which is equivalent to $$ \theta_x = \theta_{\theta_{x}(e)}^{-1}\theta_e.$$ As $\theta_x(e) \in E(S)$, the latter can be rewritten as $\theta_f$ for some $f \in E(S)$. 
\end{proof}

From now on we fix $e_0\in E(S,\cdot)$ and denote by $1=\theta_{e_0}(e_0)$.

\begin{pro}\label{pro:thetagidentity}
    Let $(S,s)$ be a finite bijective solution to the PE. Denote $G= G_1=1S$. Then
    for any $g \in G$, we have $\theta_g=\id_S$.
\end{pro}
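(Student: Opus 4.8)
The plan is to extract the identity $\theta_g = \id_S$ directly from the associativity-type relation \eqref{PE3} by feeding it the distinguished idempotent $1 = \theta_{e_0}(e_0)$ in the left slot and an arbitrary $g \in G_1$ in the right slot. Two facts, both already available, make this substitution collapse: first, $\theta_1 = \id_S$, which is exactly the computation in Proposition~\ref{pro:subgroupandonlyidempotent} identifying the identity of the group $\theta_E$ as $\theta_{\theta_{e_0}(e_0)} = \id_S$; and second, $1$ acts as a left identity on $G_1$, so that $1g = g$ for every $g \in G_1$.

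For the second fact I would first recall that $1 = \theta_{e_0}(e_0)$ is an idempotent (Lemma~\ref{lem:idempotent}) and that $G_1 = 1S$ is the maximal subgroup attached to $1$, whose identity element is $1$ itself; since $g$ and $1$ both lie in this group, $1g = g$. With these in hand I would specialize \eqref{PE3}, namely $\theta_{\theta_x(y)}\theta_{xy}(z) = \theta_y(z)$, to $x = 1$ and $y = g \in G_1$. Using $\theta_1(g) = g$ and $1g = g$, the relation becomes
\[
\theta_g\,\theta_g(z) = \theta_g(z) \qquad \text{for all } z \in S,
\]
that is, $\theta_g^2 = \theta_g$ as maps on $S$. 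Since $\theta_g$ is bijective by Proposition~\ref{bijective}, cancelling one factor of $\theta_g$ yields $\theta_g = \id_S$, as desired. Conceptually, this upgrades the single identity $\theta_1 = \id_S$ to the statement that the entire group $G_1$ acts trivially via $\theta$.

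The argument is short, so I do not expect a serious obstacle; the only point requiring care is the verification that $1g = g$ for $g \in G_1$. This is precisely where the specific choice of $1$ (rather than an arbitrary idempotent) together with the left-group structure $S = E \times G$ is used: writing $1 = (e_1, 1_G)$ one checks $G_1 = \{e_1\}\times G$ and $1\cdot(e_1,h) = (e_1,h)$, or, coordinate-free, one invokes that $1$ is the identity of the maximal subgroup $G_1 = 1S$. Everything else is an immediate consequence of \eqref{PE3} and the bijectivity of $\theta_g$.
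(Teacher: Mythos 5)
Your proof is correct and follows essentially the same route as the paper: both substitute $x=1$, $y=g$ into \eqref{PE3}, use $\theta_1=\id_S$ and $1g=g$ to obtain $\theta_g^2=\theta_g$, and cancel using bijectivity of $\theta_g$. Your additional care in justifying $1g=g$ via the left-group structure is a welcome but minor elaboration of what the paper leaves implicit.
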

\begin{proof}
    By Proposition~\ref{pro:subgroupandonlyidempotent}, $\theta_1=\id_S$. Then, 
    $$ \theta_{g}\theta_g= \theta_{\theta_1(g)}\theta_{1g}
   \overset{\eqref{PE3}}{=} \theta_g, $$ 
   which shows that $\theta_g=\id_S$. 
\end{proof}

\section{The retract of a bijective solution}  \label{sect3}
Since we assume that the solution $(S,s)$ is bijective, the inverse map $s^{-1}:S\times S \rightarrow S\times S$ can be exploited. As shown below, this leads to a dual solution of the PE and the corresponding dual semigroup structure $(S,\circ)$. In this section we combine the structure of the left groups $(S,\cdot)$ and $(S,\circ)$. In particular, we show that the set of idempotent elements $e\in (S,\cdot)$ with $\theta_e=\id_S$ coincides with the intersection of the corresponding subsemigroups of idempotents. This allows us to define the retract of a bijective solution to the PE. In particular, it identifies this intersection into a single element and produces an irretractable solution, i.e. a solution with exactly one idempotent $e \in E(S,\cdot)$ such that $\theta_e=\id_S$. While the retract of $(S,s)$ has a substantially simpler structure than $(S,s)$, Theorem~\ref{thm:summary} shows that nevertheless it retains enough information to completely recover $(S,s)$ as a natural extension.
This is in sharp contrast with the Yang--Baxter setting, where the situation is
more intricate and the retraction introduced in \cite{ESS} can be applied to a solution multiple times, yielding the multipermutation level as an invariant measure of the complexity of the solution. Even though the retraction and the multipermutation level of a solution to the Yang--Baxter Equation have been widely studied and used,
see for instance \cite{MR3815290,CJO2010,GI2018,zbMATH06006468}, they do not provide a 
mechanism for reconstructing the original solution.

Let $(S,s)$ be a bijective solution to the PE. Then we denote $$s^{-1}(x,y)=(\psi_y(x),y \circ x).$$ Then, as $s$ is a solution to the PE, we have that $$ s^{-1}_{12}s^{-1}_{13}s^{-1}_{23}=s^{-1}_{23}s^{-1}_{12}.$$

As in Lemma~\ref{lem:Kashaev}, this translates into the following equalities.
\begin{align}
        \label{PE1Inverse}z\circ (y \circ x)&=(z \circ y)\circ x,\\
        \label{PE2Inverse}\psi_z(y)\circ\psi_{z\circ y}(x)&=\psi_z(y\circ x),\\
        \label{PE3Inverse}\psi_{\psi_z(y)}\psi_{z\circ y}&=\psi_y,
    \end{align}
which shows that $t(x,y)=(x\circ y, \psi_x(y))$ is a bijective solution to PE. Hence, one obtains the following proposition on the structure of $(S,\circ)$.

\begin{pro}\label{prop:Scirc}
    Let $(S,s)$ be a finite bijective solution to the PE. Then $(S,\circ)$ is a left group.
\end{pro}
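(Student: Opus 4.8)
The plan is to observe that the dual data $(\circ,\psi)$ already assembles into a genuine bijective solution to the PE on the very same finite set $S$, and then to invoke the structure theorem already established for such solutions. Indeed, the excerpt has just recorded that $t(x,y)=(x\circ y,\psi_x(y))$ is a bijective solution to the PE: bijectivity of $t$ follows since $t=\tau s^{-1}\tau$ (with $\tau$ the flip on $S\times S$) is a composite of bijections, and the PE for $t$ is exactly the content of the Kashaev-type equations \eqref{PE1Inverse}, \eqref{PE2Inverse}, \eqref{PE3Inverse} read off for the pair $(\circ,\psi)$ in place of $(\cdot,\theta)$ via Lemma~\ref{lem:Kashaev}.

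The crucial point is a bookkeeping identification: under the standing convention that a solution $t$ is written $t(x,y)=(x\cdot_t y,\theta^t_x(y))$, the semigroup structure attached to the solution $(S,t)$ is precisely $(S,\circ)$, with associated maps $\theta^t_x=\psi_x$. This is read off directly by comparing $t(x,y)=(x\circ y,\psi_x(y))$ with the defining convention, so that $\circ$ (and not, say, its opposite) is the product of the solution $t$. Hence $(S,\circ)$ is exactly the semigroup underlying a finite bijective solution to the PE.

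With this identification in hand, the conclusion follows immediately by applying Proposition~\ref{pro:leftgroup} to the solution $(S,t)$ rather than to $(S,s)$. That proposition asserts that the semigroup underlying \emph{any} finite bijective solution to the PE is a left group; since $S$ is finite and $(S,t)$ is such a solution, $(S,\circ)$ is a left group. I do not anticipate a genuine obstacle, as all the substantive work has been front-loaded into deriving the inverse relations and into Proposition~\ref{pro:leftgroup}; the only step demanding a moment of care is the identification of the product of $t$ with $\circ$, after which the proposition is a one-line corollary.
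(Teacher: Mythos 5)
Your proposal is correct and follows essentially the same route as the paper: the paper derives \eqref{PE1Inverse}--\eqref{PE3Inverse} from the inverted Pentagon relation to conclude that $t(x,y)=(x\circ y,\psi_x(y))$ is a finite bijective solution to the PE, and then states the proposition as an immediate consequence of Proposition~\ref{pro:leftgroup} applied to $(S,t)$. Your only addition is making explicit the identification $t=\tau s^{-1}\tau$ and the matching of $\circ$ with the product of $t$, which the paper leaves implicit.
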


Furthermore, we note consequences that 
follow from the equalities 
$$s \circ s^{-1}=\id_{S\times S} = s^{-1} \circ s.$$
Namely, 
\begin{align}
        \label{Interplay1}\psi_y(x) (y\circ x) &= x,\\
        \label{Interplay2}\theta_{\psi_y(x)}(y\circ x) &= y,\\
        \label{Interplay3}\psi_{\theta_x(y)}(xy)&=x,\\
        \label{Interplay4}\theta_x(y)\circ (xy)&=y.
    \end{align}
Define 
\begin{align*}
    \theta:S\to \Sym(S), x\mapsto \theta_x,\qquad \text{and}\qquad \psi: S\to \Sym(S), x\mapsto \psi_x.
\end{align*}
Hence, it follows easily that $\theta$, respectively $\psi$, is a homomorphism from $(S,\circ)$, respectively $(S,\cdot)$, to $\Sym(S)$.
Denote 
    $$\ker \theta = \left\lbrace s \in S \mid \theta_s = \id_S \right\rbrace,$$ and similarly $$ \ker \psi = \left\lbrace s \in S \mid \psi_s = \id_S \right\rbrace.$$
    
\begin{lem}\label{lem:EF}
    Denoting $E= E(S,\cdot)$ and $F=E(S,\circ)$, we get $$ \ker \theta \cap E = \left\lbrace \theta_e(e) \mid e \in E\right\rbrace = \ker \psi \cap F = \left\lbrace \psi_f(f) \mid f \in F \right\rbrace.$$
\end{lem}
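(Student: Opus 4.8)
The plan is to prove the stated chain of equalities by separating the two ``intrinsic'' identities from the single ``bridge'' identity that actually links the two semigroup structures. Write $A_1=\ker\theta\cap E$, $A_2=\{\theta_e(e)\mid e\in E\}$, $A_3=\ker\psi\cap F$ and $A_4=\{\psi_f(f)\mid f\in F\}$. The equalities $A_1=A_2$ and $A_3=A_4$ are mirror images: since $t(x,y)=(x\circ y,\psi_x(y))$ is again a finite bijective solution to the PE with $(S,\circ)$ a left group by Proposition~\ref{prop:Scirc} (so that $\circ,\psi,F$ play the roles of $\cdot,\theta,E$), it suffices to prove $A_1=A_2$ and then read off $A_3=A_4$ by duality. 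For $A_2\subseteq A_1$ I would note that for $e\in E$ one has $\theta_e(e)\in E$ by Lemma~\ref{lem:idempotent} and $\theta_{\theta_e(e)}=\id_S$ by Proposition~\ref{pro:subgroupandonlyidempotent}, so $\theta_e(e)\in\ker\theta\cap E$. For the reverse inclusion the observation is trivial but decisive: if $e\in\ker\theta\cap E$ then $\theta_e=\id_S$, hence $e=\theta_e(e)\in A_2$.

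It remains to establish the bridge $A_1=A_3$, which is the heart of the lemma and the main obstacle, being the only point where the structures $(S,\cdot)$ and $(S,\circ)$ must genuinely be reconciled. The decisive preliminary is a clean consequence of the homomorphism property: since $\psi$ is a homomorphism from $(S,\cdot)$ into the \emph{group} $\Sym(S)$ and each $\psi_x\in\Sym(S)$ is invertible, every $a\in E$ satisfies $\psi_a=\psi_{a\cdot a}=\psi_a\psi_a$, forcing $\psi_a=\id_S$. Thus $E\subseteq\ker\psi$, and dually $F\subseteq\ker\theta$.

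With this in hand the bridge is short. Take $a\in\ker\theta\cap E$, so $\theta_a=\id_S$ and $a\cdot a=a$. Specializing the interplay relation \eqref{Interplay4} at $x=y=a$ gives $a\circ a=\theta_a(a)\circ(a\cdot a)=a$, so $a\in F$; combined with $E\subseteq\ker\psi$ this yields $a\in\ker\psi\cap F$, whence $A_1\subseteq A_3$. The reverse inclusion is the symmetric statement, obtained by the same argument applied to $t$: for $a\in\ker\psi\cap F$ one uses $F\subseteq\ker\theta$ together with \eqref{Interplay1} at $x=y=a$ to get $a\cdot a=a$, i.e. $a\in\ker\theta\cap E$. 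Chaining $A_2=A_1=A_3=A_4$ completes the proof. The only point demanding care is that the duality be genuine — that $t$ really is a finite bijective solution to the PE on the same set $S$, so that every statement proved for $(S,s)$ transfers verbatim — after which nothing beyond the two single substitutions in \eqref{Interplay4} and \eqref{Interplay1} is needed.
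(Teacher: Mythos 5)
Your proof is correct, and its skeleton matches the paper's: both establish $\ker\theta\cap E=\{\theta_e(e)\mid e\in E\}$ exactly as you do (via Lemma~\ref{lem:idempotent} and Proposition~\ref{pro:subgroupandonlyidempotent}), both obtain the last equality by running the same argument on the dual solution $t$, and both transfer idempotency across the bridge using \eqref{Interplay4} (resp.\ \eqref{Interplay1}). The genuine difference is how kernel membership is obtained. The paper proves $\psi_e=\id_S$ for $e\in\ker\theta\cap E$ pointwise: for $x\in S$ it writes $x=\psi_e(x)(e\circ x)$ by \eqref{Interplay1}, shows $e\circ x=\theta^{-1}_{\psi_e(x)}(e)\in E$ by rewriting \eqref{Interplay2} and invoking Lemma~\ref{lem:idempotent}, and then uses that idempotents of the left group $(S,\cdot)$ are right identities to conclude $x=\psi_e(x)$. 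You instead prove the stronger structural fact $E\subseteq\ker\psi$ (and dually $F\subseteq\ker\theta$) in one line from the homomorphism property $\psi_{xy}=\psi_x\psi_y$: idempotency gives $\psi_a=\psi_a^2$ inside the group $\Sym(S)$, forcing $\psi_a=\id_S$. This is legitimate: the homomorphism property is asserted in the paper right after \eqref{Interplay1}--\eqref{Interplay4} (and indeed follows from \eqref{PE3Inverse} combined with \eqref{Interplay3} and \eqref{Interplay4}), and bijectivity of the maps $\psi_x$ comes from Proposition~\ref{bijective} applied to $t$ --- precisely the duality point you flag as needing care. Your route is shorter, avoids \eqref{Interplay2} and Lemma~\ref{lem:idempotent} at this step, and yields the extra information $\ker\theta\cap E=E\cap F=\ker\psi\cap F$, which the paper only extracts later (e.g.\ inside the proof of Proposition~\ref{pro:idempotentsubsemi} and again in Theorem~\ref{thm:sol2mathc}, where $\psi_{(a,1)}=\id_S$ is used); the price is reliance on the homomorphism claim that the paper states without proof, whereas the paper's own argument is self-contained in the interplay relations.
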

\begin{proof}
    We present the proof that the first three sets are equal; then the last equality is true by symmetry. As noted before, $\theta_{\theta_e(e)}=\id_S$ for all $e \in E$.  Moreover, if $e \in \ker \theta \cap E$, then $\theta_e(e)=e$, which shows the first equality.

    Let $e \in \ker \theta \cap E$. Then, by relation \eqref{Interplay4} $$ e\circ e = \theta_e(e) \circ (ee) = e,$$ which shows that $e$ is an idempotent in $(S,\circ)$. Hence, $e \in F$.
    
    Let $x \in S$. Then by \eqref{Interplay1} we find that $$ \psi_e(x) (e \circ x) = x.$$ Moreover, rewriting relation \eqref{Interplay2}, one finds, as $e \in E$, that $$e\circ x = \theta^{-1}_{\psi_e(x)}(e) \in E$$ 
    by Lemma~\ref{lem:idempotent_reduction}. Combining this information, we get that $$ x = \psi_e(x)(e \circ x) = \psi_e(x),$$ which shows that $\psi_e=\id_S$. Thus, $\ker \theta \cap E  \subseteq \ker \psi \cap F$, the reverse equality is analogous.
\end{proof}

The previous allows us to fully determine $F=E(S,\circ)$ and by symmetry $E=E(S,\cdot)$. Recall that $1=\theta_{e_{0}}(e_0)$, as fixed before Proposition~\ref{pro:thetagidentity}.

\begin{pro}\label{pro:idempotentsubsemi}
    Let $(S,s)$ be a finite bijective solution to the PE. Then, one has that $F = (\ker \theta \cap E) G_1$. In particular, $F$ is a subsemigroup of $(S,\cdot)$.
\end{pro}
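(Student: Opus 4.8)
Write $K=\ker\theta\cap E$. The plan is to prove the set equality $F=KG_1$ by two inclusions and then to read off closure under $\cdot$. First I record the elementary left-group arithmetic I will lean on: in $(S,\cdot)=E\times G$ every idempotent is a right identity, so for $u\in K\subseteq E$ one has $uG_1=u(1S)=uS=G_u$, whence $KG_1=\bigcup_{u\in K}G_u$ is exactly the union of the maximal subgroups sitting over $K$. This description makes both inclusions and the final assertion transparent.

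For the inclusion $F\supseteq KG_1$ I would take $z\in KG_1$ and write $z=ug$ with $u\in K$, $g\in G_1$. Because $u$ is $\cdot$-idempotent, $uz=u(ug)=(uu)g=ug=z$, and because $u\in\ker\theta$, $\theta_u=\id_S$. Substituting $x=u$, $y=z$ into the interplay relation \eqref{Interplay4}, $\theta_x(y)\circ(xy)=y$, the left-hand side collapses to $z\circ z$, giving $z\circ z=z$, i.e.\ $z\in F$. This is the easy direction.

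The reverse inclusion $F\subseteq KG_1$ is the step I expect to carry the weight, and its crux is recognising the idempotent hidden inside an arbitrary $f\in F$. Starting from $f\circ f=f$ and the definition $s^{-1}(x,y)=(\psi_y(x),y\circ x)$, I would compute $s^{-1}(f,f)=(\psi_f(f),f)$ and then apply $s$ to obtain $s(\psi_f(f),f)=(f,f)$; comparing first coordinates yields $\psi_f(f)\cdot f=f$. The key input is now Lemma~\ref{lem:EF}, which identifies $\{\psi_f(f)\mid f\in F\}$ with $K=\ker\theta\cap E$. Thus $u:=\psi_f(f)$ is an idempotent in $K$ satisfying $uf=f$, so $f=uf\in uS=G_u\subseteq KG_1$, as required.

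The two inclusions give $F=KG_1=(\ker\theta\cap E)G_1$. For the final claim I would verify closure directly: for $u,u'\in K$ and $g,g'\in G_1$, using that the idempotent $u'$ is a right identity, $(ug)(u'g')=u(gu')g'=ugg'=u(gg')$, and $gg'\in G_1$ since $G_1$ is a group, so the product again lies in $KG_1$. Hence $F$ is a subsemigroup of $(S,\cdot)$. The only genuine subtlety throughout is the $\subseteq$ direction: one must spot that the idempotent attached to $f$ is precisely $\psi_f(f)$ and invoke Lemma~\ref{lem:EF}; the remaining manipulations are routine left-group bookkeeping and direct substitution into the interplay relations.
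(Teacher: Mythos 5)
Your proof is correct, and the hard direction is argued along a genuinely different route from the paper's. The inclusion $(\ker\theta\cap E)G_1\subseteq F$ via \eqref{Interplay4} is the same in both arguments. For the reverse inclusion, the paper writes $f=eg$ with $e\in E$, $g\in G_1$, first proves that $F$ is closed under $\cdot$ (via \eqref{Interplay3} and Lemma~\ref{lem:idempotent} applied to the inverse solution $t$) in order to conclude $e=(eg)g^{-1}\in E\cap F$, and then uses \eqref{Interplay2} to show $\theta_e=\id_S$. You instead spot that the relevant idempotent attached to $f\in F$ is $u=\psi_f(f)$, which lies in $\ker\theta\cap E$ by Lemma~\ref{lem:EF}, and the identity $\psi_f(f)\cdot(f\circ f)=f$ from \eqref{Interplay1} immediately gives $f=uf\in uG_1$. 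This bypasses the need to establish closure of $F$ as an intermediate step; closure then falls out of the product formula $(ug)(u'g')=u(gg')$ rather than being proved independently. The trade-off is that your argument leans on the full strength of Lemma~\ref{lem:EF} (the identification of $\{\psi_f(f)\mid f\in F\}$ with $\ker\theta\cap E$), whereas the paper's closure argument via \eqref{Interplay3} is self-contained and reusable; but as Lemma~\ref{lem:EF} is already available at this point, your shortcut is legitimate and arguably cleaner.
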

\begin{proof}
    Let $e \in \ker \theta\cap E$ and $g \in G_1$. Then, by relation \eqref{Interplay4}, 
    $$ eg \circ eg = \theta_e(eg) \circ (e eg ) = eg,$$ 
    which shows that $eg \in F$. Hence, $(\ker \theta \cap E)G_1 \subseteq F.$ In particular, as $1=\theta_{\bar{e}}(\bar{e}) \in \ker \theta \cap E$ by Proposition~\ref{pro:subgroupandonlyidempotent}, one obtains that $G_1=1G_1 \subseteq F$. Vice versa, one notes that for any $f_1,f_2 \in F$ one has, by rewriting relation \eqref{Interplay3}, that 
    $$ f_1f_2 = \psi^{-1}_{\theta_{f_1}(f_2)}(f_1),$$ 
    where the latter is in $F$ by  Lemma~\ref{lem:idempotent_reduction}. This shows that $F$ is a subsemigroup 
    of $(S,\cdot)$.
    
    Let $f \in F$. Then there exist $e \in E$ and $g \in G_1$ such that $f=eg$. It remains to show that $e \in \ker \theta$. 
    As $G_1 \subseteq F$ and $F$ is a subsemigroup of $(S,\cdot)$, it follows that $e=(eg)(g^{-1}) \in E \cap F$, where $g^{-1}$ denotes the inverse of $g$ in $G_1$. 
    Let $y \in S$. Considering relation \eqref{Interplay2}, one finds that 
    $$ \theta_{e}(y \circ \psi_{y}^{-1}(e))=y.$$
    As $e \in F$, one has, by applying  Lemma~\ref{lem:idempotent_reduction} on the solution $t$ (introduced above Proposition~\ref{prop:Scirc}), that $ \psi_{y}^{-1}(e) \in F$. Thus, the above equality can be reduced to 
    $$ \theta_e(y)=y.$$  
    Hence, $e\in \ker \theta$, showing that $\theta_e=\id_S$ and proving the result.
\end{proof}

\begin{pro} \label{pro:retract}
    Let $(S,s)$ be a finite bijective solution to the PE. For any elements $e,f\in E=E(S,\cdot)$ and $g,h \in G_1$, define the following equivalence relation on $S$: $eg\approx fh$, if $\theta_e = \theta_f$ and $g=h$. Denote $\overline{S}=S/\approx$. Then $s$ induces a bijective solution $(\overline{S},\overline{s})$, which we call the retract of $(S,s)$.
\end{pro}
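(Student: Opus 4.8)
The plan is to show that the equivalence relation $\approx$ is a congruence for the operation $s$ (in the sense that both coordinates of $s$ descend to the quotient), that the induced map $\overline{s}$ inherits the Pentagon Equation, and finally that $\overline{s}$ is bijective by a finiteness argument. Throughout I use that, since $(S,\cdot)=E\times G$ is a left group and $G_1=1G$, every $x\in S$ has a \emph{unique} decomposition $x=e_xg_x$ with $e_x\in E$ and $g_x\in G_1$; here $e_{xy}=e_x$, $g_{xy}=g_xg_y$, and $xf=x$ for every $f\in E$ (idempotents are right identities in a left group). With this notation $x\approx x'$ means exactly $\theta_{e_x}=\theta_{e_{x'}}$ and $g_x=g_{x'}$. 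Writing $q\colon S\to\overline S$ for the quotient map, I must prove that $x\approx x'$ and $y\approx y'$ imply (A) $xy\approx x'y'$ and (B) $\theta_x(y)\approx\theta_{x'}(y')$; then $\overline{s}(q(x),q(y))=(q(xy),q(\theta_x(y)))$ is well defined.

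Claim (A) is immediate: $e_{xy}=e_x$ gives $\theta_{e_{xy}}=\theta_{e_x}=\theta_{e_{x'}}=\theta_{e_{x'y'}}$, while $g_{xy}=g_xg_y=g_{x'}g_{y'}=g_{x'y'}$. The heart of the matter is to control the map $\theta_x$ itself. Note first that $\theta_x\neq\theta_{e_x}$ in general, so one cannot simply reduce to idempotents; instead the key observation is that $\theta_x$ depends only on the pair $(\theta_{e_x},g_x)$, i.e. $x\approx x'\Rightarrow \theta_x=\theta_{x'}$. To see this, apply \eqref{PE3} to the pair $(e_x,g_x)$, whose product is $x$: since $g_x\in G_1$ we have $\theta_{g_x}=\id_S$ by Proposition~\ref{pro:thetagidentity}, so \eqref{PE3} reads $\theta_{\theta_{e_x}(g_x)}\,\theta_x=\id_S$, whence $\theta_x=\theta_{\theta_{e_x}(g_x)}^{-1}$ (using that every $\theta_z$ is bijective, Proposition~\ref{bijective}). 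Consequently, if $\theta_{e_x}=\theta_{e_{x'}}$ and $g_x=g_{x'}$, then $\theta_{e_x}(g_x)=\theta_{e_{x'}}(g_{x'})$ as elements of $S$, and hence $\theta_x=\theta_{x'}$. I expect this step to be the main obstacle, precisely because $\theta_x$ genuinely varies with the $G_1$-part $g_x$; it is exactly this dependence that the matched product in Theorem~\ref{main} will later encode.

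With the $\approx$-invariance of $\theta_x$ in hand, Claim (B) splits into its idempotent part and its $G_1$-part. For the idempotent part, Lemma~\ref{right} together with Lemma~\ref{lem:idempotent} gives $e_{\theta_x(y)}=\theta_x(e_y)\in E$, and applying \eqref{PE3} to the pair $(x,e_y)$ with $xe_y=x$ yields $\theta_{\theta_x(e_y)}=\theta_{e_y}\theta_x^{-1}$; this depends only on $\theta_{e_y}$ and $\theta_x$, both $\approx$-invariant, so $\theta_{e_{\theta_x(y)}}=\theta_{e_{\theta_{x'}(y')}}$. For the $G_1$-part, I split $y=e_yg_y$ and apply \eqref{PE2} to $(x,e_y,g_y)$: using $xe_y=x$ this gives $\theta_x(y)=\theta_x(e_y)\,\theta_x(g_y)$ with $\theta_x(e_y)\in E$, so that $g_{\theta_x(y)}=g_{\theta_x(g_y)}$; since $\theta_x$ and $g_y$ are $\approx$-invariant this equals $g_{\theta_{x'}(y')}$. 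This establishes (B), so $\overline{s}$ is well defined, and by construction $q$ is a surjective homomorphism of solutions; hence the identities \eqref{PE1}--\eqref{PE3} descend from $S$ to $\overline S$ and, by Lemma~\ref{lem:Kashaev}, $(\overline S,\overline s)$ is a solution to the PE.

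It remains to prove that $\overline{s}$ is bijective, and here finiteness does the work: the product map $q\times q\colon S\times S\to\overline S\times\overline S$ is surjective and intertwines $s$ with $\overline{s}$, while $s$ is surjective since it is bijective; hence $\overline{s}\circ(q\times q)=(q\times q)\circ s$ is surjective, and therefore $\overline{s}$ is a surjective self-map of the finite set $\overline S\times\overline S$, thus bijective. Alternatively, one may verify the bijectivity criterion of Lemma~\ref{lem:Kashaev} directly on $\overline S$. This completes the proof.
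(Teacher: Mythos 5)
Your proof is correct and follows essentially the same route as the paper: you show that $\theta_x$ depends only on the $\approx$-class of $x$ via \eqref{PE3} together with $\theta_g=\id_S$ for $g\in G_1$, and then control the idempotent and $G_1$-components of $\theta_x(y)$ separately using \eqref{PE2}, \eqref{PE3} and Lemma~\ref{right}, which is exactly the paper's argument. The only difference is that you spell out the bijectivity of $\overline{s}$ (a surjective self-map of a finite set), a point the paper leaves as ``follows easily.''
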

\begin{proof}
    Clearly, $\approx$ is a congruence on the semigroup $(S,\cdot)$. Let $eg\approx fh$ such that $e,f \in E$ and $g,h \in G_1$. Then, $\theta_e=\theta_f$ and $g=h$. Let $q\in S$. First, we claim that $\theta_q$ is independent of the choice of the representing element in the congruence class of $q$.  
    Indeed, using  relation \eqref{PE3} twice, we find that 
    $$ \theta_{eg} = \theta^{-1}_{\theta_e(g)}\theta_g= \theta^{-1}_{\theta_f(g)}\theta_g=\theta_{fg},$$ 
    which shows the claim. 
    Secondly, using relation \eqref{PE2} and that $e\in E$, one gets that $$ \theta_{q}(eg)=\theta_q(e)\theta_{q}(g).$$ 
    Similarly, 
    $$ \theta_q(fg)=\theta_q(f)\theta_q(g).$$  
    
    From relation \eqref{PE3} we get that $$ \theta_{\theta_q(e)} = \theta_{\theta_q(e)}\theta_{qe}\theta^{-1}_{qe}=\theta_e\theta_{q}^{-1} = \theta_f\theta_{q}^{-1}= \theta_{\theta_q(f)}.$$ 
    This means that that $\theta_q(e) \approx \theta_q(f)$. 
    So $\theta_{q}(eg)=\theta_q(e)\theta_{q}(g) \approx  \theta_q(f)\theta_{q}(g)=\theta_q(fg)$.
    Thus, indeed, $\theta_{eg}$ and $\theta_{fg}$ induce the same map on $\overline{S}$. 
   
   It now follows easily that $\overline{S}$ carries an induced semigroup structure and satisfies relations \eqref{PE1}, \eqref{PE2} and \eqref{PE3} for the maps induced by $\theta_q$ for all $q \in S$, proving the result.
\end{proof}

Let $(S,s)$ be a finite bijective solution to the PE and $(\overline{S}, \overline{s})$ its retract. We denote by 
\begin{align*}
    \pi: S \to \overline{S}, x\to \overline{x}
\end{align*}
the homomorphism from $(S,\cdot)$ to $\overline{S}$ with respect to the induced semigroup structure. 
Moreover we denote with $\overline{\theta}$ the induced $\theta$-map
for the retract. In other words 
\begin{align*}
    \bar{s}(\bar{x},\bar{y}) = (\overline{xy}, \bar{\theta}_{\bar{x}}(\bar{y}))
\end{align*}
for $x,y\in S$.

Note that from the definition of the retract it easily follows that if $e\in E(S)$ and $x\in S$ are such that $e\approx x$ then $x\in E(S)$. In particular, $\pi(E(S))=E(\overline{S})$ and we denote it by $\overline{E}$.

We will say that a solution $(S,s)$ is \emph{irretractable} if $\overline{S}=S$.

\begin{pro}    \label{pro:ret_irret}
    Let $(S,s)$ be a finite bijective solution to the PE and $(\overline{S},\overline{s})$ its retract. 
    Then, $|\ker \overline{\theta} \cap \overline{E}|=1$.  In particular, $(\overline{S},\overline{s})$ is irretractable.
\end{pro}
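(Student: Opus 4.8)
The plan is to prove the cardinality statement $|\ker\overline{\theta}\cap\overline{E}|=1$ directly, working from the single structural fact that the quotient map $\pi\colon S\to\overline{S}$ is a homomorphism of solutions, so that
$$\overline{\theta}_{\bar{x}}(\bar{y})=\overline{\theta_x(y)}\qquad\text{for all }x,y\in S.$$
Irretractability will then follow from the characterization of irretractable solutions as those possessing exactly one idempotent $e\in E$ with $\theta_e=\id_S$.

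First I would record that $\ker\overline{\theta}\cap\overline{E}$ is nonempty: since $\theta_1=\id_S$ by Proposition~\ref{pro:subgroupandonlyidempotent} (recall $1=\theta_{e_0}(e_0)$), the identity $\overline{\theta}_{\overline{1}}(\bar{x})=\overline{\theta_1(x)}=\bar{x}$ shows $\overline{\theta}_{\overline{1}}=\id_{\overline{S}}$, while $\overline{1}\in\overline{E}$ because $1\in E$. Next I would collect two facts about the ground solution. By Lemma~\ref{lem:idempotent} together with Proposition~\ref{pro:subgroupandonlyidempotent}, for every $e\in E$ one has $\theta_e(e)\in E$ and $\theta_{\theta_e(e)}=\id_S$, so $\theta_e(e)\in\ker\theta\cap E$. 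Moreover, every $a\in\ker\theta\cap E$ satisfies $a\approx 1$: being an idempotent, $a$ is written in the factorization defining $\approx$ as $a=a\cdot 1$, with $E$-part $a$ and $G_1$-part equal to the identity $1$ of the maximal subgroup $G_1$, and likewise $1=1\cdot 1$; since $\theta_a=\id_S=\theta_1$, the defining relation yields $a\approx 1$. Hence $\pi$ collapses all of $\ker\theta\cap E$ onto the single class $\overline{1}$.

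With these in hand the argument closes quickly. Let $\bar{e}\in\ker\overline{\theta}\cap\overline{E}$ and choose a representative $e\in E$, which is possible because $\pi(E)=\overline{E}$. Applying $\overline{\theta}_{\bar{e}}=\id_{\overline{S}}$ to $\bar{e}$ itself gives $\bar{e}=\overline{\theta}_{\bar{e}}(\bar{e})=\overline{\theta_e(e)}$, and since $\theta_e(e)\in\ker\theta\cap E$ we obtain $\overline{\theta_e(e)}=\overline{1}$; thus $\bar{e}=\overline{1}$. Combined with nonemptiness, this proves $\ker\overline{\theta}\cap\overline{E}=\{\overline{1}\}$. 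I expect the only genuinely delicate point to be the bookkeeping in the second fact above: one must confirm that each idempotent of $S$ is represented in the form $eg$ with $e\in E$ and $g\in G_1$ in a way compatible with $\approx$, namely that its $G_1$-component is exactly the identity $1$ rather than another element of $G_1$, so that $\approx$ reduces to equality of $\theta$-maps on idempotents. This uses the left-group decomposition $S=E\times G$ and the fact that $1$ is the identity of $G_1$; everything else is a direct substitution into $\overline{\theta}_{\bar{x}}(\bar{y})=\overline{\theta_x(y)}$.
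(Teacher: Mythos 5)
Your proof is correct, and it takes a genuinely different (and more economical) route through the key step than the paper does. The paper also picks a representative $e\in E$ of a class $\bar{e}\in\ker\overline{\theta}\cap\overline{E}$ and also exploits the element $f=\theta_e(e)\in\ker\theta$, but it applies $\overline{\theta}_{\bar{e}}=\id_{\overline{S}}$ to $\bar{f}$ rather than to $\bar{e}$: from $\overline{\theta_e(f)}=\bar{f}$ it deduces $\theta_{\theta_e(f)}=\theta_f=\id_S$, and then invokes relation \eqref{PE3} in the form $\theta_{\theta_e(f)}\theta_{ef}=\theta_f$ together with $ef=e$ to conclude the stronger intermediate fact $\theta_e=\id_S$, i.e.\ that every idempotent representative already lies in $\ker\theta\cap E$. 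You instead evaluate $\overline{\theta}_{\bar{e}}$ at $\bar{e}$ itself to get $\bar{e}=\overline{\theta_e(e)}$ and then use only that $\theta_e(e)\in\ker\theta\cap E$ (Lemma~\ref{lem:idempotent} and Proposition~\ref{pro:subgroupandonlyidempotent}) and that $\ker\theta\cap E$ is a single $\approx$-class containing $1$; this bypasses \eqref{PE3} entirely. The bookkeeping point you flag is genuine but unproblematic: in the left-group decomposition the idempotents are exactly the elements with trivial $G_1$-component, so $\approx$ restricted to $E$ reduces to equality of $\theta$-maps, which is what both arguments need. Both your write-up and the paper's treat the passage from $|\ker\overline{\theta}\cap\overline{E}|=1$ to irretractability as immediate from the characterization stated at the start of the section, so nothing is lost there; the only thing your shorter route does not deliver is the auxiliary conclusion $\theta_e=\id_S$ for such representatives, which is not part of the statement being proved.
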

\begin{proof}
    Let $x \in \overline{E}$ with $\overline{\theta}_{x}=\id_{\overline{S}}$. Then there exists an $e \in E$ such that $\pi(e)=x$. Denote $f= \theta_e(e) \in \ker \theta$. As $\overline{\theta_{e}(f)}=\overline{\theta}_{\overline{e}} (\overline{f}) = \overline{f}$, it follows that $\theta_{f} =\theta_{\theta_{e}(f)}$, so that $\theta_e(f)\in \ker \theta$. However, $$ \theta_{\theta_e(f)}\theta_e=\theta_f=\id_S,$$ which implies, using that $\theta_e(f)\in \ker\theta$ that $\theta_e=\theta^{-1}_{\theta_e(f)}=\id_S$. This shows that $e \in \ker \theta$. Hence, both assertions follow by the remarks made before.
\end{proof}

\section{From matched pairs of groups to irretractable solutions}\label{sect_matched}

Recall that a quadruple $(G, A, \sigma, \delta)$ is a \emph{matched pair of groups} if $A$ and $G$ are groups and there are a left action $\sigma: A\to \Sym(G)$ and a right action $\delta: G \to \Sym(A)$ such that 
\begin{align}  
    \label{eq:sigmaprod}\sigma_a(xy) = \sigma_a(x) \sigma_{\delta_{x}(a)}(y) \\
    \label{eq:deltaprod}\delta_x(ab) = \delta_{\sigma_b(x)}(a)\delta_x(b).
\end{align}
If the quadruple $(G, A, \sigma, \delta)$  forms a matched pair of groups, then $G \times A$ is a group with multiplication
\begin{align*}
    (x,a)(y,b) = (x\sigma_a(y), \delta_y(a)b) 
\end{align*}
where $a,b \in A$ and $x,y \in G$. The inverse of $(a,x)$ is given by
\begin{align}
    (x,a)^{-1} = (\sigma_{a^{-1}}(x^{-1}), \delta_{x^{-1}}(a^{-1})).
\end{align}
This group will be denoted by $G\bowtie A$ and it is known also as a \emph{biproduct} or \emph{Zappa--Sz\'ep product} of $G$ and $A$. We refer to \cite{MR2024436} for details. 
From equalities~\eqref{eq:sigmaprod} and \eqref{eq:deltaprod}, one gets
\begin{align}
    \label{eq:sigmainv}(\sigma_a(x))^{-1} = \sigma_{\delta_x(a)}(x^{-1}),\\
    \label{eq:deltainv}(\delta_x(a))^{-1} = \delta_{\sigma_a(x)}(a^{-1}).
\end{align}

Now we show that every matched pair of groups naturally leads to an irretractable bijective solution to the PE.

\begin{thm}\label{thm:matched}
    Let $(G, A, \sigma, \delta)$ be a matched pair of groups and let $S$ be the Cartesian product $A\times G$ of $A$ and $G$. Then the map $s:S\times S \to S \times S$ defined by 
    \begin{align}
        s((a, x), (b,y)) = ((a,xy), (b(\delta_x(a))^{-1}, \sigma_{\delta_x(a)}(y)))
    \end{align}
    is an irretractable bijective solution to the PE.
\end{thm}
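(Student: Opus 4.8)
The plan is to apply the criterion of Lemma~\ref{lem:Kashaev} together with the bijectivity clause, and then to check that the retract is trivial. Writing $s(u,v)=(u\cdot v,\theta_u(v))$ with $u=(a,x)$, $v=(b,y)$, $w=(c,z)$ in $S=A\times G$, one reads off the semigroup product $(a,x)\cdot(b,y)=(a,xy)$ and the maps $\theta_{(a,x)}(b,y)=(b(\delta_x(a))^{-1},\sigma_{\delta_x(a)}(y))$. Since $(a,x)\cdot(b,y)=(a,xy)$ is precisely the left-group multiplication on $A\times G$, associativity \eqref{PE1} is automatic, and the whole problem reduces to verifying \eqref{PE2}, \eqref{PE3}, bijectivity, and irretractability.

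For \eqref{PE2} both sides have first coordinate $b(\delta_x(a))^{-1}$ (the left-zero coordinate is untouched by multiplication), so the content is the $G$-coordinate identity $\sigma_{\delta_x(a)}(y)\,\sigma_{\delta_{xy}(a)}(z)=\sigma_{\delta_x(a)}(yz)$. This follows immediately from \eqref{eq:sigmaprod} applied with first argument $y$ and second argument $z$, once one uses that $\delta$ is a right action, so that $\delta_y(\delta_x(a))=\delta_{xy}(a)$. The identity \eqref{PE3} is the genuinely computational step, and I expect it to be the main obstacle. Here one must expand $\theta_{\theta_u(v)}(\theta_{uv}(w))$; setting $\alpha=\delta_x(a)$, the inner label is the pair $(b\alpha^{-1},\sigma_\alpha(y))$, and the crux is to simplify $\delta_{\sigma_\alpha(y)}(b\alpha^{-1})$. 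Using \eqref{eq:deltaprod}, the fact that $\sigma$ is a left action (so $\sigma_{\alpha^{-1}}\sigma_\alpha=\id$), and the inverse formula \eqref{eq:deltainv}, this collapses to $\delta_y(b)(\delta_{xy}(a))^{-1}$; feeding this back, the $A$-coordinate telescopes to $c(\delta_y(b))^{-1}$ and the $G$-coordinate, using once more that $\sigma$ is a left action, telescopes to $\sigma_{\delta_y(b)}(z)$, which is exactly $\theta_v(w)$.

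For bijectivity I would use the second part of Lemma~\ref{lem:Kashaev}: given $(p,r)$ and $(q,t)$ in $S$, solve $(a,x_1)\cdot(b,y_1)=(p,r)$ and $\theta_{(a,x_1)}(b,y_1)=(q,t)$. The first equation forces $a=p$ and $y_1=x_1^{-1}r$, while the second gives $b=q\,\delta_{x_1}(p)$ together with the single constraint $\sigma_{\delta_{x_1}(p)}(x_1^{-1}r)=t$. Using \eqref{eq:sigmaprod}, the right-action property of $\delta$ (which gives $\delta_{x_1^{-1}}(\delta_{x_1}(p))=p$), and the inverse formula \eqref{eq:sigmainv}, the left-hand side simplifies to $(\sigma_p(x_1))^{-1}\sigma_p(r)$, so the constraint becomes $\sigma_p(x_1)=\sigma_p(r)t^{-1}$. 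Since $\sigma_p\in\Sym(G)$, this has the unique solution $x_1=\sigma_{p^{-1}}(\sigma_p(r)t^{-1})$, which then determines $y_1$ and $b$ uniquely; hence the required pair $(u,v)$ exists and is unique, and $s$ is bijective.

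Finally, for irretractability I would make the ingredients of the defining congruence $\approx$ explicit for this solution. The idempotents are $E=A\times\{1_G\}$, and $\theta_{(a,1_G)}(b,y)=(ba^{-1},\sigma_a(y))$ depends faithfully on $a$ (taking $b=1_A$ shows $\theta_{(a,1_G)}=\theta_{(a',1_G)}$ forces $a=a'$); with $e_0=(1_A,1_G)$ one computes $1=\theta_{e_0}(e_0)=(1_A,1_G)$, so $G_1=\{1_A\}\times G$ and every element factors uniquely as $eg$ with $e\in E$, $g\in G_1$. Consequently $eg\approx fh$ forces $\theta_e=\theta_f$, hence $e=f$, together with $g=h$, so $\approx$ is trivial, $\overline{S}=S$, and the solution is irretractable.
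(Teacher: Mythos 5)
Your proposal is correct and follows essentially the same route as the paper's proof: reduce to the criterion of Lemma~\ref{lem:Kashaev}, verify \eqref{PE2} via \eqref{eq:sigmaprod} and the right-action property of $\delta$, verify \eqref{PE3} by first simplifying $\delta_{\sigma_{\delta_x(a)}(y)}\bigl(b(\delta_x(a))^{-1}\bigr)$ to $\delta_y(b)(\delta_{xy}(a))^{-1}$, establish bijectivity by solving uniquely for the preimage after applying $\sigma_a$, and get irretractability from the fact that $\theta_{(a,1)}$ determines $a$. The intermediate identities you state (including the telescoping in \eqref{PE3} and the unique solution $x_1=\sigma_{p^{-1}}(\sigma_p(r)t^{-1})$) all match the paper's computations.
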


\begin{proof}
    Let us define on $A\times G$ the following operation
    \begin{align}
        (a,x)(b,y)= (a,xy).
    \end{align}
    Clearly, the latter endows  $A\times G$ with a semigroup structure, even a left group. Hence, \eqref{PE1} is satisfied.
    Moreover, let us also define
    \begin{align*}
        \theta_{(a,x)}(b,y) =  (b(\delta_x(a))^{-1}, \sigma_{\delta_x(a)}(y))
    \end{align*}
    for all $a,b\in A$, $x,y \in G$. Then
    \begin{align*}
        \theta_{(a,x)}(b,y)\theta_{(a,x)(b,y)}(c,z)&= (b(\delta_x(a))^{-1}, \sigma_{\delta_x(a)}(y))\theta_{(a,xy)}(c,z)\\
        &= (b(\delta_x(a))^{-1}, \sigma_{\delta_x(a)}(y)) (c(\delta_{xy}(a))^{-1}, \sigma_{\delta_{xy}(a)}(z))\\
        &= (b(\delta_x(a))^{-1}, \sigma_{\delta_x(a)}(y)\sigma_{\delta_{xy}(a)}(z))\\
        &=(b(\delta_x(a))^{-1}, \sigma_{\delta_x(a)}(y)\sigma_{\delta_{y}\delta_x(a)}(z))\\
        &\overset{\eqref{eq:sigmaprod}}{=} (b(\delta_x(a))^{-1}, \sigma_{\delta_x(a)}(yz)\\
        &=\theta_{(a,x)}(b,yz)= \theta_{(a,x)}((b,y)(c,z)).
    \end{align*}
    Hence, \eqref{PE2} is satisfied. Now, let us first compute
    \begin{align*}
        \delta_{\sigma_{\delta_x(a)}(y)}(b(\delta_x(a))^{-1})
        &=\delta_{\sigma_{(\delta_x(a))^{-1}}\sigma_{\delta_x(a)}(y)}(b)\delta_{\sigma_{\delta_x(a)}(y)}((\delta_x(a))^{-1})\\
        &\overset{\eqref{eq:deltainv}}{=}\delta_{y}(b)\delta_{\sigma_{\delta_x(a)}(y)}\delta_{\sigma_a(x)}(a^{-1})\\
        &\overset{\eqref{eq:sigmainv}}{=}\delta_{y}(b)\delta_{\sigma_a(x)\sigma_{\delta_x(a)}(y)}(a^{-1})\\
        &=\delta_{y}(b)\delta_{\sigma_a(xy)}(a^{-1})\\
        &=\delta_{y}(b)(\delta_{xy}(a))^{-1}.
    \end{align*}
    It follows that
    \begin{align*}
        &\theta_{\theta_{(a,x)}(b,y)}\theta_{(a,x)(b,y)}(c,z)
        =\theta_{(b(\delta_x(a))^{-1}, \sigma_{\delta_x(a)}(y))}
        (c(\delta_{xy}(a))^{-1}, \sigma_{\delta_{xy}(a)}(z))\\
        &=(c(\delta_{xy}(a))^{-1}(\delta_{\sigma_{\delta_x(a)}(y)}(b(\delta_x(a))^{-1}))^{-1}, \sigma_{\delta_{\sigma_{\delta_x(a)}(y)}(b(\delta_x(a))^{-1})}\sigma_{\delta_{xy}(a)}(z))\\
        &=(c(\delta_{xy}(a))^{-1}(\delta_{y}(b)(\delta_{xy}(a))^{-1})^{-1}, \sigma_{\delta_{y}(b)(\delta_{xy}(a))^{-1}}\sigma_{\delta_{xy}(a)}(z))\\
        &=(c(\delta_{y}(b)(\delta_{xy}(a))^{-1}\delta_{xy}(a))^{-1}, \sigma_{\delta_{y}(b)(\delta_{xy}(a))^{-1}\delta_{xy}(a)}(z))\\
        &=(c(\delta_{y}(b))^{-1}, \sigma_{\delta_{y}(b)}(z))\\
        &=\theta_{(b,y)}(c,z).
    \end{align*}
    Hence, \eqref{PE3} is satisfied.

    We show  that $s$ is bijective. Let $(a,x),(b,y) \in S$. Then we need to show that there exist unique $(c,u),(d,v) \in S$ such that $$s( (c,u), (d,v)) = ((a,x),(b,y)).$$ Clearly, it is necessary by the definition of $s$ that $c=a$, $uv=x$ and $\sigma_{\delta_u(a)}(v)=y$. Next, the equality $x= uv$ is equivalent via application of $\sigma_a$ in both terms, by equality \eqref{eq:sigmaprod}, to  $$\sigma_a(x) = \sigma_a(uv) = \sigma_a(u) \sigma_{\delta_u(a)}(v)= \sigma_a(u)y.$$ Hence, $$ u = \sigma_{a}^{-1}(\sigma_a(x)y^{-1}).$$ Thus, $v$ is uniquely determined as $ \sigma_{\delta_u(a)}^{-1}(y).$ Finally, $d = b \delta_u(a)$, which shows that such $(c,u),(d,v) \in S$ exist and are unique.
    
    Now we are left to prove that $(A\times G, s)$ is irretractable. 
    First note that $A\times \{1\}$ coincides with the set of idempotents of $A\times G$. Indeed 
    \begin{align*}
        (a,x)(a,x)= (a,x) \iff x=x^2 \iff x =1.
    \end{align*}
    Moreover note that if $(a,1)$ and $(b,1)$ are such that 
    \begin{align*}
        \theta_{(a,1)}(c,z) = \theta_{(b,1)}(c,z)
    \end{align*}
    for every $(c,z)$. Then it follows that 
    \begin{align*}
        &(c(\delta_1(a))^{-1}, \sigma_{\delta_1(a)}(z)) = (c(\delta_1(b))^{-1}, \sigma_{\delta_1(b)}(z))\\ 
        &\iff c(\delta_1(a))^{-1} = c(\delta_1(b))^{-1} \text{ and } \sigma_{\delta_1(a)}(z)=\sigma_{\delta_1(b)}(z) \\
        &\iff ca^{-1} = cb^{-1} \text{ and } \sigma_{a}(z)=\sigma_{b}(z)\\
        &\iff a = b.
    \end{align*}
    Hence, 
    \begin{align*}
        (a,x)\approx(b,y) \iff \theta_{(a,1)}=\theta_{(b,1)} \text{ and } x=y \iff a=b \text{ and } x=y,
    \end{align*}
    which means that $(A\times G, s)$ is irretractable.
\end{proof}

\section{From irretractable solutions to matched pairs of groups}

In this section, we prove that finite irretractable bijective solutions to the PE define matched pairs of groups. We will use again the element $1\in S$, introduced before Proposition~\ref{pro:thetagidentity} and used in the previous section and the notation $t(x,y)=(x\circ y, \psi_x(y))$ for the map $t=\tau s^{-1}\tau$ introduced in Section~\ref{sect3}. We start with a few technical lemmas. 

\begin{lem}\label{lem:AandG}
    Let $(S,s)$ be a finite irretractable bijective solution to the PE. Then, $G=1\cdot S$ is a subgroup of $(S,\cdot)$ and coincides with $E(S,\circ)$. Similarly, $A=1\circ S$ is a subgroup of $(S,\circ)$ and coincides with $E(S,\cdot)$.
\end{lem}

\begin{proof}
     By Proposition~\ref{pro:idempotentsubsemi} we have $E(S,\circ)=F= (\ker\theta \cap E(S,\cdot))G_1$. Since $(S,s)$ is irretractable, necessarily $\ker\theta\cap E(S,\cdot)=\{1\}$, hence $E(S,\circ)=G_1 =G$. By symmetry, applying the same argument to the solution $t=\tau s^{-1}\tau$, one also obtains $A=E(S,\cdot)$.
\end{proof}

\begin{lem}\label{lem:actions}
    Let $(S,s)$ be a finite irretractable bijective solution to the PE and let $A=1\circ S$ and $G=1\cdot S$. Then 
    \begin{itemize}
        \item the map $\sigma: A\to \Sym(G)$ defined by $\sigma_a(x) = 1\cdot \theta_{(a,1)}(1,x)$ is a left action of $(A,\circ)$ on the set $G$,
       \item the map $\delta: G \to \Sym(A)$ defined by $\delta_x(a) = (1 \circ \psi_{(1,x^{-1})}(a^{-1},1))^{-1}$ is a right action of $(G,\cdot)$ on the set $A$.
    \end{itemize}
\end{lem}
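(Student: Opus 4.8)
The plan is to handle both assertions with essentially one argument, exploiting the $\cdot\leftrightarrow\circ$ symmetry. Throughout I identify $S$ with $A\times G$ by writing $s=a\cdot x$ for the unique factorisation with $a\in A=E(S,\cdot)$ and $x\in G=1\cdot S$; this exists and is unique because $(S,\cdot)$ is a left group and $G=1\cdot S$ is a transversal of $E$. Under this identification $(a,1)=a$, $(1,x)=x$, and left multiplication by $1$ is the projection $S\to G$, $s\mapsto 1\cdot s$. I will freely use that, since $(S,s)$ is irretractable, one has $\theta_1=\psi_1=\id_S$ (Lemma~\ref{lem:EF} and Lemma~\ref{lem:AandG}); that $\theta$ restricts to a homomorphism $(A,\circ)\to\Sym(S)$, so $\theta_{a\circ b}=\theta_a\theta_b$; that each $\theta_a$ is bijective (Proposition~\ref{bijective}); that $\theta_a(E)\subseteq E$ (Lemma~\ref{lem:idempotent}); and that $E$ is a left-zero semigroup, so $1\cdot p=1$ for all $p\in E$, while $1\cdot x=x$ for all $x\in G$.

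The heart of the matter is the following projection identity: for every $a\in A$ and every $y\in S$,
\[
1\cdot\theta_a(y)=1\cdot\theta_a(1\cdot y).
\]
To prove it I factor $y=f\cdot w$ with $f\in E$ and $w=1\cdot y\in G$, and use \eqref{PE2} together with $a\cdot f=a$ (idempotents are right identities) to obtain $\theta_a(y)=\theta_a(f)\,\theta_a(w)$. Since $\theta_a(f)\in E$ by Lemma~\ref{lem:idempotent} and $1\cdot p=1$ for $p\in E$, left multiplication by $1$ annihilates the first factor and leaves $1\cdot\theta_a(y)=1\cdot\theta_a(w)=1\cdot\theta_a(1\cdot y)$. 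Granting this, the left-action property of $\sigma$ follows at once: for $x\in G$,
\[
\sigma_a\sigma_b(x)=1\cdot\theta_a\bigl(1\cdot\theta_b(x)\bigr)=1\cdot\theta_a\bigl(\theta_b(x)\bigr)=1\cdot\theta_{a\circ b}(x)=\sigma_{a\circ b}(x),
\]
using the identity with $y=\theta_b(x)$ and $\theta_a\theta_b=\theta_{a\circ b}$. Moreover $\sigma_1(x)=1\cdot\theta_1(x)=1\cdot x=x$, so $\sigma_1=\id_G$; hence $\sigma_a\sigma_{a^{-1}}=\sigma_{a^{-1}}\sigma_a=\id_G$, each $\sigma_a$ is a bijection of $G$, and $\sigma$ is a left action of $(A,\circ)$ on $G$.

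For $\delta$ I would not repeat the computation but invoke symmetry. The map $t(x,y)=(x\circ y,\psi_x(y))$ is again a finite bijective solution to the PE (introduced before Proposition~\ref{prop:Scirc}), and it is irretractable because $\ker\psi\cap E(S,\circ)=\ker\theta\cap E(S,\cdot)$ is a singleton by Lemma~\ref{lem:EF}. Applying the already proved assertion about $\sigma$ to $t$ — which interchanges $\cdot\leftrightarrow\circ$, $\theta\leftrightarrow\psi$ and $A\leftrightarrow G$ — yields that $\sigma'_x(a):=1\circ\psi_x(a)$ is a left action of $(G,\cdot)$ on $A$. It then remains to relate $\delta$ to $\sigma'$: by definition $\delta_x(a)=\bigl(\sigma'_{x^{-1}}(a^{-1})\bigr)^{-1}$, that is $\delta_x=\iota\,\sigma'_{x^{-1}}\,\iota$, where $\iota$ denotes inversion in the group $(A,\circ)$. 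Since $\iota^2=\id_A$ and $\sigma'$ is a left action, $(xy)^{-1}=y^{-1}x^{-1}$ gives
\[
\delta_{xy}=\iota\,\sigma'_{y^{-1}}\sigma'_{x^{-1}}\,\iota=(\iota\,\sigma'_{y^{-1}}\,\iota)(\iota\,\sigma'_{x^{-1}}\,\iota)=\delta_y\delta_x,
\]
while $\delta_1=\iota\,\sigma'_1\,\iota=\id_A$; thus $\delta$ is a right action of $(G,\cdot)$ on $A$.

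I expect the main obstacle to be organisational rather than computational: pinning down the identifications $(a,x)=a\cdot x$ and the two projections $1\cdot(-)$ and $1\circ(-)$, and above all making the symmetry step watertight, namely verifying that passing to $t$ genuinely interchanges the roles of $A$ and $G$ and preserves irretractability, so that the $\sigma$-argument can legitimately be reused for $\delta$. The projection identity is the one genuinely new ingredient, but it is short once the factorisation $y=f\cdot w$ is available; the inversion twist for $\delta$ is then purely formal.
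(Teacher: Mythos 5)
Your proposal is correct and follows essentially the same route as the paper: the left-action property of $\sigma$ via \eqref{PE2}, the homomorphism property of $\theta$ on $(S,\circ)$ and the idempotent absorption $1\cdot e=1$ (which the paper inlines rather than isolating as your ``projection identity''), followed by the auxiliary left action $\sigma'_x(a)=1\circ\psi_{(1,x)}(a,1)$ and the conjugation-by-inversion computation $\delta_{xy}=\delta_y\delta_x$. The only cosmetic difference is that you obtain $\sigma'$ by applying the first bullet to the flipped inverse solution $t$ (correctly checking that $t$ is irretractable and swaps $A$ with $G$), whereas the paper simply repeats the symmetric computation for $\sigma'$ explicitly.
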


\begin{proof}
    Let $\sigma: A\to \Sym(G)$ be defined by $\sigma_a(x) = 1\cdot \theta_{(a,1)}(1,x)$ and let the map $\delta: G \to \Sym(A)$ be defined by $\delta_x(a) = (1 \circ \psi_{(1,x^{-1})}(a^{-1},1))^{-1}$.
    Let $a,b\in A$ and $x,y \in G$.  Since $\theta$ is a homomorphism from $(S,\circ)$ to $\Sym(S)$, it follows that 
    \begin{align*}
        \sigma_a\sigma_b(x) &= 1\cdot \theta_{(a,1)}(1,1\cdot \theta_{(b,1)}(1,x)) \\
        &= 1 \cdot \theta_{(a,1)}((1,1)\cdot\theta_{(b,1)}(1,x))\\
        &\overset{\eqref{PE2}}{=} 1 \cdot \theta_{(a,1)}(1,1)\cdot\theta_{(a,1)\cdot (1,1)}\theta_{(b,1)}(1,x)\\
        &=1 \cdot \theta_{(a,1)\circ (b,1)}(1,x)\\
        &= 1 \cdot \theta_{(a\circ b, 1)}(1,x)\\
        &=\sigma_{a\circ b}(x).
    \end{align*}
    Let us define $\sigma': G \to \Sym(A)$ by $\sigma'_x(a) = 1 \circ \psi_{(1,x)}(a,1)$ and prove that it is a left action of $(G,\cdot)$ on the set $A$. 
    Indeed,
    \begin{align*}
        \sigma'_x\sigma'_y(a) &= 1\circ \psi_{(1,x)}(1,1\circ \psi_{(1,y)}(a,1)) \\
        &= 1 \cdot \psi_{(1,x)}((1,1)\circ\psi_{(1,y)}(a,1))\\
        &\overset{\eqref{PE2Inverse}}{=} 1 \circ \psi_{(1,x)}(1,1)\circ\psi_{(1,x)\circ (1,1)}\psi_{(1,y)}(a,1)\\
        &=1 \circ \psi_{(1,x)\dot (1,y)}(a,1)\\
        &= 1 \cdot \psi_{(1,xy)}(a,1)\\
        &=\sigma'_{xy}(a).
    \end{align*}
    Finally, as $\sigma'$ is a left action and $\delta_x(a)=(\sigma'_{x^{-1}}(a^{-1}))^{-1}$, we have
    \begin{align*}
        \delta_x\delta_y(a)&=\delta_x(\sigma'_{y^{-1}}(a^{-1})^{-1})\\
        &=\sigma'_{x^{-1}}(\sigma'_{y^{-1}}(a^{-1}))^{-1}\\
        &= \sigma'_{x^{-1}y^{-1}}(a^{-1})^{-1}\\
        &=\delta_{yx}(a).
    \end{align*}
    Hence $\delta$ is a right action of $(G,\cdot)$ on the set $A$.
\end{proof}
\begin{lem}\label{lem:linkingformula}
    Let $(S,s)$ be a finite irretractable bijective solution to the PE. With the notation of Lemma~\ref{lem:actions} one has for any $a \in A$ and $x\in G$ that \begin{align}
        \label{linking1}\theta_{(a,1)}(1,x) &= (1,x) \circ \psi_{(1,x)}^{-1}((a^{-1},1))\\
        \label{linking2}\theta_{(a,1)(1,x)} &=\theta^{-1}_{\theta_{(a,1)}(1,x)}\\
        \label{linking3}\psi_{(1,x)}(a,1)&= (a,1)\cdot \theta^{-1}_{(a,1)}(1,x^{-1})\\
        \label{linking4} \psi_{(1,x)\circ (a,1)} &= \psi^{-1}_{\psi_{(1,x)}(a,1)}
    \end{align}
\end{lem}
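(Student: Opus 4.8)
The plan is to handle the four identities in two dual pairs, exploiting the symmetry between $(S,s)$ and the solution $t(x,y)=(x\circ y,\psi_x(y))$, which interchanges $\theta\leftrightarrow\psi$, $\cdot\leftrightarrow\circ$ and, in the irretractable setting, $A=1\circ S\leftrightarrow G=1\cdot S$. Throughout I would lean on three structural facts already available. First, by Lemma~\ref{lem:AandG}, $A$ is a group under $\circ$ and $G$ is a group under $\cdot$, and both share the common identity $1$ (which is simultaneously a $\circ$- and a $\cdot$-idempotent); I write $a^{-1}$, $x^{-1}$ for the inverses in these respective groups. Second, by Proposition~\ref{pro:thetagidentity} one has $\theta_{(1,x)}=\id_S$ for $(1,x)\in G=1\cdot S$, and by the dual of that proposition applied to $t$ one has $\psi_{(a,1)}=\id_S$ for $(a,1)\in A=1\circ S$. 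Third, $\theta$ and $\psi$ are homomorphisms from $(S,\circ)$ and $(S,\cdot)$ into $\Sym(S)$, so that $\theta_{(a,1)}\theta_{(a^{-1},1)}=\theta_1=\id_S$ and $\psi_{(1,x)}\psi_{(1,x^{-1})}=\psi_1=\id_S$; in particular $\theta_{(a^{-1},1)}^{-1}=\theta_{(a,1)}$ and $\psi_{(1,x^{-1})}^{-1}=\psi_{(1,x)}$.

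For \eqref{linking2} I would apply \eqref{PE3} with first argument $(a,1)$ and second argument $(1,x)$. Because $(1,x)\in G$, the right-hand side becomes $\theta_{(1,x)}=\id_S$, so $\theta_{\theta_{(a,1)}(1,x)}\,\theta_{(a,1)(1,x)}=\id_S$, which rearranges at once to \eqref{linking2}. Identity \eqref{linking4} is the exact mirror image: applying \eqref{PE3Inverse} with first argument $(1,x)$ and second argument $(a,1)$ and using $\psi_{(a,1)}=\id_S$ gives $\psi_{\psi_{(1,x)}(a,1)}\,\psi_{(1,x)\circ(a,1)}=\id_S$, which is \eqref{linking4} after composing with an inverse.

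The two remaining identities need the interplay relations. For \eqref{linking1} the decisive move is to introduce $q=\psi_{(1,x)}^{-1}((a^{-1},1))$, so that $\psi_{(1,x)}(q)=(a^{-1},1)$, and substitute this into \eqref{Interplay2} with $y=(1,x)$ and $z=q$; this produces $\theta_{(a^{-1},1)}\bigl((1,x)\circ q\bigr)=(1,x)$. Since $(a,1)$ and $(a^{-1},1)$ are mutually inverse in $(A,\circ)$, the inverse of $\theta_{(a^{-1},1)}$ is $\theta_{(a,1)}$, and applying it to both sides identifies $(1,x)\circ q$ with $\theta_{(a,1)}(1,x)$, which is exactly \eqref{linking1}. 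Identity \eqref{linking3} follows by running the same argument through the $\cdot/\circ$ duality: set $r=\theta_{(a,1)}^{-1}((1,x^{-1}))$ and feed it into \eqref{Interplay3} with first argument $(a,1)$ and second argument $r$, obtaining $\psi_{(1,x^{-1})}\bigl((a,1)\cdot r\bigr)=(a,1)$; composing with $\psi_{(1,x^{-1})}^{-1}=\psi_{(1,x)}$ then yields $\psi_{(1,x)}(a,1)=(a,1)\cdot r$, which is \eqref{linking3}.

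The only genuine subtlety I anticipate is the bookkeeping of the two group structures carried by the single set $S$: one must remember that $(a,1),(a^{-1},1)$ are inverse in $(A,\circ)$ whereas $(1,x),(1,x^{-1})$ are inverse in $(G,\cdot)$, that both groups meet in the common identity $1$, and that $\theta$ is multiplicative for $\circ$ while $\psi$ is multiplicative for $\cdot$. Once these conventions are pinned down, the choices of the auxiliary elements $q$ and $r$ and the two inversions $\theta_{(a^{-1},1)}^{-1}=\theta_{(a,1)}$, $\psi_{(1,x^{-1})}^{-1}=\psi_{(1,x)}$ are the only steps requiring any thought; the rest is a direct substitution into \eqref{PE3}, \eqref{PE3Inverse}, \eqref{Interplay2} and \eqref{Interplay3}.
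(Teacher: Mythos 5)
Your proof is correct and takes essentially the same route as the paper: \eqref{linking2} and \eqref{linking4} come from \eqref{PE3} and \eqref{PE3Inverse} together with $\theta_{(1,x)}=\id_S$ and $\psi_{(a,1)}=\id_S$, while \eqref{linking1} and \eqref{linking3} come from the compatibility of $s$ with $s^{-1}$. Your use of \eqref{Interplay2} and \eqref{Interplay3} with the auxiliary elements $q$ and $r$ is just the componentwise form of the paper's direct evaluation of $s$ and $s^{-1}$ on the corresponding pairs, followed by the same inversion of $\theta_{(a^{-1},1)}$ (resp.\ $\psi_{(1,x^{-1})}$) via the homomorphism property.
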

\begin{proof}
    First, by definition of $s$ and $s^{-1}$, one obtains that $$s^{-1}(\psi^{-1}_{(1,x)}(a^{-1},1), (1,x))=((a^{-1},1),(1,x) \circ \psi_{(1,x)}^{-1}(a^{-1},1)).$$ This implies that $$s((a^{-1},1),(1,x) \circ \psi_{(1,x)}^{-1}(a^{-1},1))=(\psi^{-1}_{(1,x)}(a^{-1},1), (1,x)).$$ In particular, one finds that $\theta_{(a^{-1},1)}((1,x) \circ \psi_{(1,x)}^{-1}(a^{-1},1))=(1,x)$. As $\theta$ is an action of $(A,\circ)$ on the set $S$, this can be rewritten to $$\theta_{(a,1)}(1,x) = (1,x)\circ \psi_{(1,x)}^{-1}(a^{-1},1),$$ which shows the first equality.

    Second, by relation (\ref{PE3}), one finds that $$ \theta_{\theta_{(a,1)}(1,x)} \theta_{(a,1)(1,x)} = \theta_{(1,x)}.$$
    As $\theta_{(1,x)}=\id_S$, due to Proposition~\ref{pro:thetagidentity}, the previous can be rewritten to $$ \theta_{(a,1)(1,x)}=\theta^{-1}_{\theta_{(a,1)}(1,x)},$$ which shows the second equality.

    Third, by definition of $s$, one finds that $$ s((a,1),\theta_{(a,1)}^{-1}(1,x^{-1}))= ((a,1)\cdot \theta_{(a,1)}^{-1}(1,x^{-1}), (1,x^{-1})). $$ Applying $s^{-1}$ on both sides of the equality, one finds in particular that $$ \psi_{(1,x)^{-1}}((a,1)\cdot \theta_{(a,1)}^{-1}(1,x^{-1})). $$ As $\psi$ is a left action of $(G,\cdot)$ on $S$, this can be rewritten to $$ \psi_{(1,x)}(a,1)= (a,1) \cdot \theta_{(a,1)}^{-1}(1,x),$$ which shows the third equality.

    Finally, by relation (\ref{PE3Inverse}), it follows that $$ \psi_{\psi_{(1,x)}(a,1)}\psi_{(1,x)\circ (a,1)} = \psi_{(a,1)}.$$ As the latter, by a symmetric result to Proposition~\ref{pro:thetagidentity}, is the identity on $S$, one finds that $$ \psi_{(1,x)\circ (a,1)} = \psi^{-1}_{\psi_{(1,x)}(a,1)}, $$ which shows the result.
\end{proof}

We are now ready for the main result of this section.

\begin{thm}\label{thm:sol2mathc}
    Let $(S,s)$ be a finite irretractable bijective solution to the PE. Then $A$ and $G$ form a matched pair of groups with respect to $\sigma$ and $\delta$ defined in Lemma~\ref{lem:actions} and $(S,s)$ is isomorphic to the solution associated to $(A,G,\sigma, \delta)$ as in Theorem~\ref{thm:matched}.
\end{thm}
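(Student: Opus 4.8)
The plan is to make explicit the identification of $S$ with $A\times G$ that is already implicit in Lemmas~\ref{lem:actions} and~\ref{lem:linkingformula}, and then to check two things: that $(A,G,\sigma,\delta)$ satisfies the matched‑pair axioms \eqref{eq:sigmaprod} and \eqref{eq:deltaprod}, and that under this identification $s$ is precisely the map of Theorem~\ref{thm:matched}. By Lemma~\ref{lem:AandG}, $A=1\circ S=E(S,\cdot)$ while $G=1\cdot S$ is the maximal subgroup of $(S,\cdot)$ containing $1$. Since $(S,\cdot)$ is a left group with idempotents $A$ and structure group $G$, every $w\in S$ factors uniquely as $w=a\cdot x$ with $a\in A$ and $x=1\cdot w\in G$, where $a=w\cdot(1\cdot w)^{-1}$, the inverse being taken in $G$. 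Writing $(a,x):=a\cdot x$ realises the bijection $A\times G\to S$, and a direct computation in the left group gives $(a,x)(b,y)=(a,xy)$; this yields \eqref{PE1} and already matches the first component of the solution in Theorem~\ref{thm:matched}. All the content is therefore concentrated in the formula for $\theta$.

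The central step (call it $(\dagger)$) is the identity
\[
\theta_{(a,x)}(b,y)=\bigl(b(\delta_x(a))^{-1},\ \sigma_{\delta_x(a)}(y)\bigr),
\]
with $b(\delta_x(a))^{-1}$ computed in the group $(A,\circ)$. I would reduce $(\dagger)$ to two special cases. Because $(b,1)\in E(S,\cdot)$ is a right identity we have $(a,x)(b,1)=(a,x)$, so \eqref{PE2}, applied with last two arguments $(b,1)$ and $(1,y)$, gives
\[
\theta_{(a,x)}(b,y)=\theta_{(a,x)}(b,1)\cdot\theta_{(a,x)}(1,y).
\]
It thus suffices to prove $\theta_{(a,x)}(1,y)=\bigl((\delta_x(a))^{-1},\sigma_{\delta_x(a)}(y)\bigr)$ and $\theta_{(a,x)}(b,1)=b(\delta_x(a))^{-1}\in A$, since the left‑group product of the idempotent $b(\delta_x(a))^{-1}$ with $\bigl((\delta_x(a))^{-1},\sigma_{\delta_x(a)}(y)\bigr)$ reproduces the right‑hand side of $(\dagger)$. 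For the two special cases I would rewrite $\theta_{(a,x)}=\theta_{(a,1)(1,x)}=\theta^{-1}_{\theta_{(a,1)}(1,x)}$ using \eqref{linking2}, reducing everything to the element $\theta_{(a,1)}(1,x)$, which by \eqref{linking1} equals $(1,x)\circ\psi_{(1,x)}^{-1}(a^{-1},1)$; its $G$‑component is $\sigma_a(x)$ by definition of $\sigma$, and its $A$‑component I would identify with the $\circ$‑inverse $a^{-1}$. The remaining manipulations use that $\theta$ and $\psi$ are homomorphisms from $(S,\circ)$ and $(S,\cdot)$ into $\Sym(S)$, that $\theta_{(1,y)}=\id_S$ by Proposition~\ref{pro:thetagidentity} and $\psi_{(a,1)}=\id_S$ by its symmetric analogue, the definitions of $\sigma$ and $\delta$ from Lemma~\ref{lem:actions}, and the interplay relations \eqref{Interplay1}--\eqref{Interplay4}.

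Once $(\dagger)$ is established, the matched‑pair axioms follow from the relations that $(S,s)$ already satisfies, so the two halves of the theorem are proved together. Substituting $(\dagger)$ into \eqref{PE2} for the triple $(a,x),(1,y),(1,z)$ and comparing $G$‑components gives $\sigma_{\delta_x(a)}(yz)=\sigma_{\delta_x(a)}(y)\,\sigma_{\delta_{xy}(a)}(z)$; writing $c=\delta_x(a)$ and using $\delta_{xy}=\delta_y\delta_x$ (as $\delta$ is a right action) this is exactly \eqref{eq:sigmaprod}, and $c$ runs over all of $A$ already when $x=1$. Substituting $(\dagger)$ into \eqref{PE3} and comparing $A$‑components gives, with $c=\delta_x(a)$, the identity $\delta_{\sigma_{c}(y)}(bc^{-1})\,\delta_y(c)=\delta_y(b)$, which is precisely \eqref{eq:deltaprod} applied to the factorisation $b=(bc^{-1})\cdot c$ in $A$; as $b$ and $c$ range over $A$ this yields \eqref{eq:deltaprod} in full. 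Hence $(A,G,\sigma,\delta)$ is a matched pair, $\bar s$ is the solution of Theorem~\ref{thm:matched}, and $(\dagger)$ together with $(a,x)(b,y)=(a,xy)$ says exactly that $(a,x)\mapsto a\cdot x$ is an isomorphism of solutions.

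The hard part will be $(\dagger)$, and inside it the determination of the $A$‑component (the idempotent part) of $\theta_{(a,1)}(1,x)$. The difficulty is structural: $\theta$ is a homomorphism for $\circ$ whereas the decomposition $w=a\cdot x$ lives in $\cdot$, so each step forces a passage between the two semigroup operations through the interplay relations \eqref{Interplay1}--\eqref{Interplay4} and the linking formulas \eqref{linking1}--\eqref{linking4}. Tracking which factor of each intermediate product lands in $A=E(S,\cdot)$ and which in $G=E(S,\circ)$, and matching correctly the $\circ$‑inverses that appear inside $\delta$, is the delicate bookkeeping that carries the weight of the argument.
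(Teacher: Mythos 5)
Your proposal is correct, and it reorganizes the paper's argument in a genuinely different way. The paper proceeds in the opposite order: it first proves the matched-pair axioms \eqref{eq:sigmaprod} and \eqref{eq:deltaprod} by direct computation from the definitions of $\sigma$ and $\delta$, the linking formulas of Lemma~\ref{lem:linkingformula}, and relations \eqref{PE2}, \eqref{PE2Inverse} (this, including a long chain of manipulations for $\delta_x(a\circ b)$, occupies most of its proof), and only afterwards verifies that $\theta$ agrees with the map $\theta'$ coming from Theorem~\ref{thm:matched} --- using exactly the reduction you propose, namely evaluating on $(b,1)$ and $(1,y)$ via \eqref{PE2}. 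You instead establish the explicit formula $(\dagger)$ first and then read off both axioms by substituting it into \eqref{PE2} and \eqref{PE3}; I checked these substitutions, and with $c=\delta_x(a)$, $\delta_{xy}=\delta_y\delta_x$ and $\delta_1=\id$ they do yield \eqref{eq:sigmaprod} and \eqref{eq:deltaprod} in full generality. So your second step replaces the paper's heaviest computations by a transparent coordinate comparison: the matched-pair axioms are exhibited as nothing other than \eqref{PE2} and \eqref{PE3} written in the coordinates of $(\dagger)$. The price is that $(\dagger)$ must be proved without the scaffolding of Theorem~\ref{thm:matched}; but $(\dagger)$ is a meaningful identity about $\theta,\sigma,\delta$ on its own, so there is no circularity.

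The one place where your text is only a plan --- the identification of the $A$-components --- does go through with the tools you cite, so it is not a gap. Concretely: applying \eqref{Interplay4} to two idempotents gives $\theta_{(c,1)}(b,1)\circ\bigl((c,1)(b,1)\bigr)=(b,1)$; since $(c,1)(b,1)=(c,1)$ in the left-zero semigroup $E(S,\cdot)$ and $\theta_{(c,1)}(b,1)\in A$ by Lemma~\ref{lem:idempotent}, the group structure of $(A,\circ)$ forces $\theta_{(c,1)}(b,1)=(b\circ c^{-1},1)$; Lemma~\ref{right} then pins the idempotent component of $\theta_{(a,x)}(1,y)$ as $\theta_{(a,x)}(1,1)$. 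For the passage to $\delta$, note that $\theta_w$ depends only on $1\circ w$, because $\theta$ is a homomorphism on $(S,\circ)$ killing $G=E(S,\circ)$ and $w=f\circ(1\circ w)$ in the left group $(S,\circ)$; hence $\theta_{(a,x)}\overset{\eqref{linking2}}{=}\theta^{-1}_{\theta_{(a,1)}(1,x)}\overset{\eqref{linking1}}{=}\theta^{-1}_{\psi_{(1,x^{-1})}(a^{-1},1)}=\theta^{-1}_{1\circ\psi_{(1,x^{-1})}(a^{-1},1)}=\theta_{(\delta_x(a),1)}$ by the definition of $\delta$ in Lemma~\ref{lem:actions}. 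Combining these two facts with your \eqref{PE2}-splitting gives $(\dagger)$, and the rest of your argument is complete.
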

\begin{proof}
    By Lemma~\ref{lem:actions} one has a left action $\sigma: A \rightarrow \Sym(G)$ and a right action $ \delta: G \rightarrow \Sym(A)$. To show that the quadruple $(G,A,\sigma,\delta)$ is a matched pair of groups, it remains to show that for any $x,y \in G$ and $a,b \in A$ one has that 
    \begin{align*}
    \sigma_a(xy) = \sigma_a(x) \sigma_{\delta_{x}(a)}(y)\\
    \delta_x(ab) = \delta_{\sigma_b(x)}(a)\delta_x(b).
\end{align*}
We start with the former. Let $a\in A$ and $x,y \in G$.
\begin{align*}
\sigma_a(xy)&= 1\cdot \theta_{(a,1)}(1,xy)\\ 
&= 1\theta_{(a,1)}((1,x)(1,y))\\
&\overset{\eqref{PE2}}{=} 1\theta_{(a,1)}(1,x)\theta_{(a,1)(1,x)}(1,y)\\
&\overset{\eqref{linking1}}{=}\sigma_a(x)\cdot \theta^{-1}_{\theta_{(a,1)}(1,x)}(1,y)\\
&\overset{\eqref{linking2}}{=}\sigma_a(x) \cdot \theta^{-1}_{(1,x)\circ \psi^{-1}_{(1,x)}(a^{-1},1)}(1,y).
\end{align*}
As $\theta$ is a left action of $(S,\circ)$ on the set $S$ and $\theta_{(1,y)}=\id_S$ for all $y \in G$, one obtains that \begin{equation}\label{eq:recalibrate} \theta^{-1}_{(1,x)\circ \psi^{-1}_{(1,x)}(a^{-1},1)} = \theta^{-1}_{(1,1)\circ \psi^{-1}_{(1,x)}(a^{-1},1)}. \end{equation} Hence, reprising the earlier calculation, \begin{align*}
    \sigma_a(xy)&=\sigma_a(x) \cdot \theta^{-1}_{(1,x)\circ \psi^{-1}_{(1,x)}(a^{-1},1)}(1,y)\\
    &= \sigma_a(x) \cdot \theta^{-1}_{(1,1)\circ \psi^{-1}_{(1,x)}(a^{-1},1)}(1,y)\\
    &= \sigma_a(x) \cdot \theta_{(\delta_x(a),1)}(1,y),
    \end{align*}
where we used the fact that $\psi$ is an action of $G$ on $S$ and $\theta$ is an action of $A$ on $S$. As $(1,1)$ is a right identity of $(S,\cdot)$, one can continue the chain of equalities as follows,
\begin{align*}
   \sigma_a(xy) &= \sigma_a(x) \cdot \theta_{(\delta_x(a),1)}(1,y)\\
    &= \sigma_a(x) \cdot (1,1) \cdot \theta_{(\delta_x(a),1)}(1,y)\\
    &= \sigma_a(x) \cdot \sigma_{\delta_x(a)}(y).
\end{align*}
Finally, it remains to show that $$ \delta_x(ab) = \delta_{\sigma_b(x)}(a)\delta_x(b).$$
We have 
\begin{align*}
    \delta_x(a\circ b) &= \sigma'_{x^{-1}}(b^{-1}\circ a^{-1})^{-1}\\
    &=\left((1,1)\circ\psi_{(1,x^{-1})}(b^{-1} \circ a^{-1},1) \right)^{-1}\\
    &\overset{\eqref{PE2Inverse}}{=}\left( (1,1) \circ \psi_{(1,x^{-1})}(b^{-1},1) \circ \psi_{(1,x^{-1}) \circ (b^{-1},1)}(a^{-1},1)\right)^{-1}\\
    &=\left( (1,1) \circ \psi_{(1,x^{-1})}(b^{-1},1) \circ (1,1)\circ \psi_{((1,x^{-1}) \circ (b^{-1},1)}(a^{-1},1)\right)^{-1}\\
    &= \left( (1,1)\circ \psi_{((1,x^{-1}) \circ (b^{-1},1)}(a^{-1},1)\right)^{-1} \circ \delta_x(b)\\
    &\overset{\eqref{linking4}}{=} \left( (1,1) \circ \psi^{-1}_{ \psi_{(1,x^{-1})}(b^{-1},1)}(a^{-1},1) \right)^{-1}\circ \delta_x(b)\\
    &\overset{\eqref{linking3}}{=} \left( (1,1) \circ \psi^{-1}_{(b^{-1},1)\cdot \theta^{-1}_{(b^{-1},1)}(1,x)}(a^{-1},1)\right)^{-1} \circ \delta_x(b) .
\end{align*}
As $\psi$ is a left action of $(S,\cdot)$ on the set $S$ and $\psi_{(a,1)}=\id_S$ for all $a \in A$, one obtains that $$ \psi^{-1}_{(b^{-1},1)\cdot\theta^{-1}_{(b^{-1},1)}(1,x)} = \psi^{-1}_{(1,1)\cdot \theta_{(b,1)}(1,x)}, $$ where we also applied that $\theta$ is a left action of $(S,\circ)$ on $S$ to cancel the inverses of $\theta$ and $b$. Hence, reprising the previous calculation,
\begin{align*}
    \delta_x(a\circ b) &=\left( (1,1) \circ \psi^{-1}_{(b^{-1},1)\cdot \theta^{-1}_{(b^{-1},1)}(1,x)}(a^{-1},1)\right)^{-1} \circ \delta_x(b)\\
    &=\left( (1,1) \circ \psi^{-1}_{(1,1)\cdot \theta_{(b,1)}(1,x)}(a^{-1},1)\right)^{-1} \circ \delta_x(b)\\
    &= \left( (1,1) \circ \psi^{-1}_{(1,\sigma_b(x))}(a^{-1},1)\right)^{-1} \circ \delta_x(b),
\end{align*}
where one applies the definition of $\sigma$. As $\psi$ is an action of $G$ on $S$, one finds from the previous that \begin{align*}
    \delta_x(a\circ b)  &= \left( (1,1) \circ \psi_{(1,\sigma_b(x)^{-1})}(a^{-1},1)\right)^{-1} \circ \delta_x(b)\\
    &= \delta_{\sigma_b(x)}(a) \circ \delta_x(b).
    \end{align*}
    Hence, the quadruple $(G,A,\sigma,\delta)$ is a matched pair of groups, as claimed. 

    Let $(S,s')$ be the solution associated to the matched pair of groups $(G,A,\sigma,\delta)$ constructed in Theorem~\ref{thm:matched} with corresponding map $\theta'$ and semigroup $(S,\cdot')$. 
    It is clear that the semigroups $(S,\cdot)$ and $(S,\cdot')$ coincide. Hence, it remains to show that for any $(a,x)=(a,1)(1,x) \in S$, one has that $\theta'_{(a,x)}=\theta_{(a,x)}$.
    Let $b \in A$ and $y \in G$. Then by relation (\ref{PE2}), it is sufficient to show that $\theta'_{(a,x)}(b,1) = \theta_{(a,x)}(b,1)$ and $(1,1)\cdot \theta'_{(a,x)}(1,y)=(1,1) \cdot \theta_{(a,x)}(1,y)$. We start with the former. By definition of $\theta'$, we have that \begin{align*}
        \theta'_{(a,x)}(b,1) &= (b \circ (\delta_x(a))^{-1},1)\\
        &= b \circ (1,1) \circ \psi_{(1,x^{-1})}(a^{-1},1) .
    \end{align*}
    As $(1,x)$ and $(1,1)$ are left identities for $(S,\circ)$ one can rewrite the latter into \begin{align*}
        \theta'_{(a,x)}(b,1) &=(b,1) \circ (1,x) \circ \psi_{(1,x^{-1})}(a^{-1},1) \\
        &\overset{\eqref{linking1}}{=} (b,1) \circ \theta_{(a,1)}(1,x)\\
        &= \theta^{-1}_{\theta_{(a,1)}(1,x)}(b,1)\\
        &\overset{\eqref{linking2}}{=} \theta_{(a,x)}(b,1),
    \end{align*}
    which shows the first required equality.  Finally, let $y \in G$. Then, \begin{align*}
        (1,1)\cdot\theta'_{(a,x)}(1,y) &= (1, \sigma_{(\delta_x(a),1)}(y))\\
        &= (1,1)\cdot \theta^{-1}_{(1,1) \circ \psi_{(1,x^{-1})}(a^{-1},1)}(1,y),
        \end{align*}
        where the latter equality holds by equality~(\ref{eq:recalibrate}). Continuing the chain of equalities,
        \begin{align*}
        (1,1)\cdot\theta'_{(a,x)}(1,y) &= (1,1)\cdot \theta^{-1}_{(1,1) \circ \psi_{(1,x^{-1})}(a^{-1},1)}(1,y)\\        
        &\overset{\eqref{linking1}}{=} (1,1)\cdot \theta^{-1}_{(1,x) \circ \psi_{(1,x^{-1})}(a^{-1},1)}(1,y)\\
        &\overset{\eqref{linking2}}{=}(1,1)\cdot \theta^{-1}_{\theta_{(a,1)}(1,x)}(1,y)\\
        &=(1,1) \cdot \theta_{(a,x)}(1,y),
    \end{align*}
    which shows that $\theta'=\theta$. Hence, the result follows.
\end{proof}

Let us now discuss isomorphisms of irretractable solutions. We will see that
isomorphic irretractable solutions correspond to isomorphic matched pairs of
groups.

Here (and throughout), the notion of isomorphism for matched pairs is the natural one:
we say that $(A,G,\sigma,\delta)$ and $(A',G',\sigma',\delta')$ are \emph{isomorphic}
matched pairs if there exist group isomorphisms $f:A\to A'$ and $g:G\to G'$ such that
\[
g(\sigma_a(x))=\sigma'_{f(a)}(g(x))
\quad\text{and}\quad
f(\delta_x(a))=\delta'_{g(x)}(f(a))
\]
for all $a\in A$ and $x\in G$.

\begin{pro}
    Let $(A,G,\sigma,\delta)$ and $(A',G',\sigma', \delta')$ be isomorphic matched pairs of groups. Then $(A\times G, s)$ and $(A'\times G',s')$, the irretractable bijective solutions to the PE as defined in Theorem~\ref{thm:matched}, are isomorphic.
\end{pro}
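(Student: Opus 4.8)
The plan is to show that the isomorphism of matched pairs $f = f_1 \times f_2$ is, without modification, an isomorphism of the associated solutions. Concretely, I would take as candidate the map
\[
f \colon A \times G \to A' \times G', \qquad f(a,x) = (f_1(a), f_2(x)),
\]
which is a bijection because $f_1$ and $f_2$ are. Since the underlying sets of both solutions are Cartesian products and $f$ is already bijective, the whole task reduces to checking that $f$ is a homomorphism of solutions, i.e. that $(f\times f)\circ s = s'\circ(f\times f)$ on $(A\times G)\times(A\times G)$.

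The core step is a direct expansion of both sides on an arbitrary pair $((a,x),(b,y))$ using the explicit formula of Theorem~\ref{thm:matched}. This yields
\begin{align*}
(f\times f)\bigl(s((a,x),(b,y))\bigr)
&= \bigl((f_1(a),\, f_2(xy)),\ (f_1(b(\delta_x(a))^{-1}),\, f_2(\sigma_{\delta_x(a)}(y)))\bigr),\\
s'\bigl((f_1(a),f_2(x)),(f_1(b),f_2(y))\bigr)
&= \bigl((f_1(a),\, f_2(x)f_2(y)),\ (f_1(b)(\delta'_{f_2(x)}(f_1(a)))^{-1},\, \sigma'_{\delta'_{f_2(x)}(f_1(a))}(f_2(y)))\bigr).
\end{align*}
I would then match the four coordinates one at a time. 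The first coordinate is trivially $f_1(a)$ on both sides; the second follows since $f_2$ is a group homomorphism, so $f_2(xy)=f_2(x)f_2(y)$. For the third coordinate, applying that $f_1$ is a homomorphism gives $f_1(b(\delta_x(a))^{-1}) = f_1(b)\,(f_1(\delta_x(a)))^{-1}$, and the compatibility condition $f_1\delta_x(a)=\delta'_{f_2(x)}f_1(a)$ rewrites this as $f_1(b)(\delta'_{f_2(x)}(f_1(a)))^{-1}$, as required. For the fourth coordinate, the other compatibility condition $f_2\sigma_a(x)=\sigma'_{f_1(a)}f_2(x)$, used with $a$ replaced by $\delta_x(a)$ and $x$ by $y$, gives $f_2(\sigma_{\delta_x(a)}(y)) = \sigma'_{f_1(\delta_x(a))}(f_2(y))$, and then $f_1(\delta_x(a)) = \delta'_{f_2(x)}(f_1(a))$ identifies the subscripts. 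This closes the verification.

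There is no genuine obstacle here: the two compatibility conditions defining an isomorphism of matched pairs are designed to be exactly what is needed to transport the $\delta$- and $\sigma$-data through $f$, and the remaining coordinates are governed only by the homomorphism property of $f_1$ and $f_2$. The only point demanding care is bookkeeping, namely checking that the arguments and subscripts of $\sigma,\delta$ on one side correspond under $f_1,f_2$ to those of $\sigma',\delta'$ on the other, with the correct placement of inverses; beyond this the argument is a routine coordinatewise comparison. Having established that $f$ intertwines $s$ and $s'$, bijectivity of $f$ then yields that $(A\times G,s)$ and $(A'\times G',s')$ are isomorphic solutions to the PE.
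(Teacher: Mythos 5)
Your proposal is correct and follows essentially the same route as the paper: both take the candidate map $f=f_1\times f_2$ and verify by direct coordinatewise computation, using the two compatibility conditions $f_1\delta_x(a)=\delta'_{f_2(x)}f_1(a)$ and $f_2\sigma_a(x)=\sigma'_{f_1(a)}f_2(x)$ together with the homomorphism property of $f_1$ and $f_2$, that $(f\times f)\circ s=s'\circ(f\times f)$. The only cosmetic difference is that the paper organizes the check by first verifying compatibility with the semigroup multiplication and then with the $\theta$-maps, whereas you compare all four coordinates at once.
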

\begin{proof}
    Let $a,b\in A$ and $x,y\in G$. By definition of isomorphism of matched pairs we have 
    \begin{align*}
        f(a,x)f(b,y) &= (f_1(a),f_2(x))(f_1(b),f_2(y)) = (f_1(a), f_2(x)f_2(y))\\ &
        = (f_1(a), f_2(xy))
        = f(a,xy) = f((a,x)(b,y)).
    \end{align*}
    Moreover, 
    \begin{align*}
        f\theta_{(a,x)}(b,y) &= f(b(\delta_x(a))^{-1},\sigma_{\delta_x(a)}(y))
        = (f_1(b(\delta_x(a))^{-1}), f_2(\sigma_{\delta_x(a)}(y)))\\
        &= (f_1(b)f_1(\delta_x(a))^{-1}, \sigma'_{f_1\delta_x(a)}f_2(y))\\
        &= (f_1(b)(\delta'_{f_2(x)}f_1(a))^{-1}, \sigma'_{\delta'_{f_2(x)}f_1(a)}f_2(y))\\
        &= \theta'_{(f_1(a),f_2(x))}(f_1(b),f_2(y)) = \theta'_{f(a,x)}f(b,y).
    \end{align*}
    Hence,
    \begin{align*}
        (f\times f)s((a,x),(b,y)) 
        &= (f\times f)((a,x)(b,y),\theta_{(a,x)}(b,y))\\
        &= (f(a,x)f(b,y), \theta'_{f(a,x)}f(b,y))
        = s'(f(a,x),f(b,y)) \\
        &= s'(f\times f)((a,x),(b,y)).
    \end{align*}
    Hence $(S,s)$ and $(S',s')$ are isomorphic solutions.
\end{proof}

Next, we will prove the converse. First, note that if $(S,s)$ and $(S',s')$ are isomorphic solutions and $(S,s)$ is bijective and irretractable then so is $(S',s')$.
\begin{pro}\label{pro:isoirr}
    Let $(S,s)$ and $(S',s')$ be finite irretractable solutions to the PE. Then there exists an isomorphism of the underlying matched pairs.
\end{pro}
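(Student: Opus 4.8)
The plan is to show that a single isomorphism of solutions $f\colon S\to S'$ automatically factors as the product $f_1\times f_2$ of the two group isomorphisms witnessing an isomorphism of matched pairs. So let $f\colon S\to S'$ be an isomorphism of solutions. First I would extract from $f$ all the structure it preserves. Writing $s(x,y)=(xy,\theta_x(y))$, the defining relation $(f\times f)s=s'(f\times f)$ gives at once $f(xy)=f(x)f(y)$ and $f(\theta_x(y))=\theta'_{f(x)}(f(y))$, so that $f$ is an isomorphism $(S,\cdot)\to(S',\cdot')$ intertwining $\theta$ with $\theta'$. Moreover $f\times f$ conjugates $s$ to $s'$, hence also $s^{-1}$ to $s'^{-1}$; reading off the components of $s^{-1}(x,y)=(\psi_y(x),y\circ x)$ then yields $f(y\circ x)=f(y)\circ' f(x)$ and $f(\psi_y(x))=\psi'_{f(y)}(f(x))$. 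Thus $f$ is simultaneously an isomorphism $(S,\circ)\to(S',\circ')$ intertwining $\psi$ with $\psi'$.

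Next I would locate the distinguished element and the two subgroups. Since $f$ is a $\cdot$-isomorphism it maps $E(S,\cdot)$ onto $E(S',\cdot')$; combining $\theta_1=\id_S$ with the intertwining of $\theta$ gives $\theta'_{f(1)}=\id_{S'}$, so $f(1)\in \ker\theta'\cap E(S',\cdot')$, which equals $\{1'\}$ by irretractability, whence $f(1)=1'$. Using Lemma~\ref{lem:AandG} and that $f$ respects both operations, $f(A)=f(1\circ S)=1'\circ' S'=A'$ and $f(G)=f(1\cdot S)=1'\cdot' S'=G'$. Hence $f_1:=f|_A\colon(A,\circ)\to(A',\circ')$ and $f_2:=f|_G\colon(G,\cdot)\to(G',\cdot')$ are group isomorphisms. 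Under the identifications $S=A\times G$ and $S'=A'\times G'$ given by $s=a\cdot x\mapsto(a,x)$, multiplicativity of $f$ gives $f(a\cdot x)=f_1(a)\cdot' f_2(x)$, that is $f=f_1\times f_2$ as a map $A\times G\to A'\times G'$.

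It then remains to check the action-compatibilities. For $\sigma$, I would use the formula $\sigma_a(x)=1\cdot\theta_{(a,1)}(1,x)\in G$ from Lemma~\ref{lem:actions} and compute, via $f(1)=1'$, the relation $f=f_1\times f_2$ and the intertwining of $\theta$, that $f_2(\sigma_a(x))=f(1\cdot\theta_{(a,1)}(1,x))=1'\cdot'\theta'_{(f_1(a),1)}(1,f_2(x))=\sigma'_{f_1(a)}(f_2(x))$, where the first equality uses $\sigma_a(x)\in G$. Symmetrically, starting from $\delta_x(a)=(1\circ\psi_{(1,x^{-1})}(a^{-1},1))^{-1}\in A$ and using that $f_1$ is a group isomorphism (so it commutes with $\circ$-inverses) together with the intertwining of $\psi$, I obtain $f_1(\delta_x(a))=\delta'_{f_2(x)}(f_1(a))$. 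These are precisely the relations $f_2\sigma_a=\sigma'_{f_1(a)}f_2$ and $f_1\delta_x=\delta'_{f_2(x)}f_1$ defining an isomorphism of matched pairs, so $f_1\times f_2$ is such an isomorphism.

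I expect the only real subtlety to be bookkeeping rather than a genuine obstruction. The set $S$ carries the two intertwined left-group structures $\cdot$ and $\circ$, and one must consistently track which identity, which inverse and which projection (the map $s\mapsto 1\cdot s$ onto $G$, respectively $s\mapsto 1\circ s$ onto $A$) is being used at each stage, and verify that $f$ respects the single decomposition $S=A\times G$ common to both structures. Once $f(1)=1'$ and $f=f_1\times f_2$ are in place, the verification of the two compatibility conditions is a direct substitution into the defining formulas of $\sigma,\delta$ and $\sigma',\delta'$, so no further difficulty arises.
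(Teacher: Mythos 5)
Your proposal is correct and follows essentially the same route as the paper: pin down $f(1)=1'$ via irretractability, restrict $f$ to get $f_1$ on $A$ and $f_2$ on $G$, deduce $f=f_1\times f_2$, and verify the two compatibility conditions. The only cosmetic difference is that you obtain the $\circ$-structure and $\psi$-intertwining by conjugating $s^{-1}$ and then use the defining formulas of $\sigma,\delta$ from Lemma~\ref{lem:actions}, whereas the paper evaluates the explicit formula for $s$ on special pairs such as $((a^{-1},1),(b,1))$ and $((a,x),(1,1))$.
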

\begin{proof} Without loss of generality (see Theorem~\ref{thm:sol2mathc}) let us assume that $(S,s)$ and $(S,s')$ are, respectively, the solutions associated to the matched pairs $(A,G,\sigma,\delta)$ and $(A',G',\sigma',\delta')$.
    Let $f$ be the isomorphism between the solutions $(S,s)$ and $(S',s)$. Clearly, $f$ defines an isomorphism of the left groups $(S,\cdot)$ and $(S',\cdot)$. Since $f$ maps $A\times \left\lbrace 1 \right\rbrace = E(S,\cdot)$ onto $E(S',\cdot)=A' \times \left \lbrace 1 \right\rbrace$, $f$ induces a bijective map from $A$ to $A'$ which we denote $f_1$. Note that as $(1,1)$ is the unique idempotent in $S$ with $\theta_{(1,1)}=\id_S$, one finds that $f(1,1)=(1,1)$. Hence, $f$ restricts to an isomorphism between $(1,1)S=\left\lbrace 1 \right\rbrace \times  G$ and $\left\lbrace 1 \right\rbrace \times G'$. Thus, $f$ induces an isomorphism $f_2$ from $G$ to $G'$. To summarize, for $a\in A$ and $x \in G$ one has $f(a,x)= (f_1(a),f_2(x))$. Next we show that $f_1$ is an isomorphism between $(A,\circ)$ and $(A',\circ)$. Let $a,b \in A$. Then, $$ (f \times f)s((a^{-1},1),(b,1)) = (f\times f) ((a,1),(b\circ a,1)) = ((f_1(a),1),(f_1(b\circ a),1)).$$ On the other hand, we obtain that $$ s' \circ (f \times f)((a^{-1},1),(b,1)) = ((f_1(a),1),(f_1(b)\circ f_1(a),1)).$$ Hence, $f_1$ is an isomorphism between $A$ and $A'$. It remains to show that $f$ respects the actions $\sigma$ and $\delta$. Let $a \in A$ and $x\in G$. Then $$ (f\times f) s((a,x),(1,1)) = (f\times f)( (a,x), ( \delta_x(a)^{-1},1))= (f(a,x),(f_1(\delta_x(a)^{-1}),1)).$$ Similarly, $$ s' (f\times f) ((a,x),(1,1)) = ((f_1(a),f_2(x)),(\delta_{f_2(x)}'(f_1(a))^{-1},1)).$$ As $f_1$ is a group morphism, this shows that $f_1 \delta_x(a) = \delta_{f_2(x)}'(f_1(a))$. Applying a similar reasoning on the tuple $((a,1),(1,x))$ one finds that $f_2 \sigma_a(x)= \sigma_{f_1(a)}'(f_2(x))$. Hence, $f$ is an isomorphism of the matched pairs $(A,G,\sigma,\delta)$ and $(A',G',\sigma',\delta')$. 
\end{proof}

\section{A complete description of finite bijective solutions}

In this section, we first describe extensions of a finite irretractable bijective solution and then we provide a full classification of all finite bijective solutions (up to isomorphism).

\begin{lem}\label{lem:extviakern}
    Let $(S,s)$ be a finite bijective solution to the PE. Let $e \in E(S,\cdot)$. Then, the equivalence class of $e$ under the retract $\approx$ has exactly $|\ker \theta \cap E(S,\cdot) |$ elements. In particular, denoting by $\overline{E}$ the set of idempotents of the retract $\overline{S}$, one can identify $E(S,\cdot)$ with the Cartesian product of $\ker \theta \cap E(S,\cdot)$ and $\overline{E}$.
\end{lem}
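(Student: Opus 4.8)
The plan is to reduce the statement to a counting result about the fibers of the map $f\mapsto\theta_f$ on the set of idempotents. Write $E=E(S,\cdot)$ and recall from Proposition~\ref{pro:leftgroup} that $S=E\times G$ is a left group, so every $x\in S$ is uniquely of the form $x=fh$ with $f\in E$, $h\in G_1$, and $ef=e$ for all $e,f\in E$. By the definition of the congruence $\approx$, the class of an idempotent $e=e\cdot 1$ is exactly $\{fh : \theta_f=\theta_e,\ h=1\}=\{f\in E : \theta_f=\theta_e\}$, which lies inside $E$. Thus the first assertion is equivalent to showing that each fiber $\{f\in E : \theta_f=\pi\}$, for $\pi\in\theta_E$, has cardinality $|\ker\theta\cap E|$, and the ``in particular'' clause will follow once these fibers are put into coherent bijection with the distinguished fiber $\ker\theta\cap E=\{f\in E:\theta_f=\id_S\}$.

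The key device I would set up is a transport formula for these fibers. Applying \eqref{PE3} with $x=e$, $y=f$ for $e,f\in E$ and using $ef=e$ gives $\theta_{\theta_e(f)}\theta_e=\theta_f$; since $\theta_e$ is bijective by Proposition~\ref{bijective}, this yields
\[
\theta_{\theta_e(f)}=\theta_f\,\theta_e^{-1}\qquad(e,f\in E).
\]
By Lemma~\ref{lem:idempotent} the permutation $\theta_e$ maps $E$ into itself, hence, $E$ being finite and $\theta_e$ injective, it restricts to a permutation of $E$. The displayed formula then shows that this permutation sends $\{f\in E:\theta_f=\pi\}$ into $\{f'\in E:\theta_{f'}=\pi\theta_e^{-1}\}$ for every $\pi\in\theta_E$. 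Since $E$ is the disjoint union of its fibers and $\theta_e$ is a bijection of $E$, all these inclusions are forced to be bijections; in particular $|\{f\in E:\theta_f=\pi\}|=|\{f'\in E:\theta_{f'}=\pi\theta_e^{-1}\}|$ for every $\pi$.

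Taking $\pi=\id_S$ and letting $e$ vary, the element $\theta_e^{-1}$ ranges over all of the group $\theta_E$ (a group on which $\theta$ is surjective, by Proposition~\ref{pro:subgroupandonlyidempotent}); hence every fiber has the common cardinality $|\ker\theta\cap E|$, which proves the first claim since $\{f\in E:\theta_f=\theta_e\}$ is precisely the fiber over $\theta_e$. For the final identification I would observe that $f\mapsto\theta_f$ induces a bijection $\overline{E}\to\theta_E$, so, choosing for each $\bar e\in\overline{E}$ an element $e'\in E$ with $\theta_{e'}=\theta_e^{-1}$, the assignment $(k,\bar e)\mapsto\theta_{e'}(k)$ maps $\ker\theta\cap E$ bijectively onto the class $\bar e$ (indeed $\theta_{\theta_{e'}(k)}=\theta_k\theta_{e'}^{-1}=\theta_e$), and these classes partition $E$; hence the assignment is a bijection $(\ker\theta\cap E)\times\overline{E}\to E(S,\cdot)$, realizing the desired Cartesian product. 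The step requiring the most care is the fiber-transport argument: one must check that $\theta_e$ genuinely permutes $E$ and that the fiberwise inclusions it induces are compelled to be bijections, where finiteness and the group structure of $\theta_E$ are both essential.
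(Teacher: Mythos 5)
Your proof is correct, but it takes a genuinely different route from the paper's. The paper argues through the dual operation $\circ$ coming from $s^{-1}$: since $\theta$ is a homomorphism from $(S,\circ)$ to $\Sym(S)$ and $(S,\circ)$ is a left group, the $\approx$-class of an idempotent $a$ is identified as $\{x\circ a : x\in \ker\theta\cap E(S,\cdot)\}$ (with $x\in E(S,\cdot)$ supplied by Proposition~\ref{pro:idempotentsubsemi}), and injectivity of right $\circ$-translation in a left group gives the count immediately. You instead stay entirely inside $(S,\cdot)$: the transport formula $\theta_{\theta_e(f)}=\theta_f\theta_e^{-1}$ extracted from \eqref{PE3}, combined with the facts that $\theta_e$ permutes $E(S,\cdot)$ (Lemma~\ref{lem:idempotent} plus Proposition~\ref{bijective}) and that $\theta_E$ is a group acted on transitively by right translation (Proposition~\ref{pro:subgroupandonlyidempotent}), forces the fibers of $f\mapsto\theta_f$ on $E(S,\cdot)$ to be equinumerous. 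All the steps check out, including the disjoint-union argument that upgrades the fiberwise inclusions to bijections. What your version buys is independence from the $\circ$-structure and from Proposition~\ref{pro:idempotentsubsemi}, at the cost of a somewhat longer counting argument; it also exhibits the bijection $(\ker\theta\cap E(S,\cdot))\times\overline{E}\to E(S,\cdot)$ explicitly, which the paper leaves implicit in its ``in particular'' clause.
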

\begin{proof}
    Let $a \in E(S,\cdot)$. Let $b \in S$ be in the equivalence class of $a$ under the retract $\approx$, i.e. $\theta_a=\theta_b$. Then, by definition of $\approx$, one has that $b \in E(S,\cdot)$. Recall that $(S,\circ)$ is a left group. Hence, there exists an $x\in S$ such that $x \circ a = b$. As $\theta$ is an morphism of $(S,\circ)$ to $\Sym(S)$, we find that  $$ \theta_a=\theta_b = \theta_{x \circ a}=\theta_x \theta_a.$$ Thus, $\theta_x \in \ker \theta$. Moreover, by Proposition~\ref{pro:idempotentsubsemi}, $E(S,\cdot)$ is a sub left group of $(S,\circ)$, which shows that $x \in E(S,\cdot)\cap \ker \theta$. This shows that there are at most $|\ker \theta \cap E(S,\cdot)|$ elements in the equivalence class of $a$. Vice versa, one sees easily that for any $x \in \ker \theta \cap E(S,\cdot)$ one has that $\theta_{x \circ a}=\theta_a$ and $x \circ a \in E(S,\cdot)$ for any $a \in E(S,\cdot)$. Hence, $x \circ a$ is in the equivalence class of $e$ under $\approx$. As $(S,\circ)$ is a left group, one obtains at least $|\ker \theta \cap E(S,\cdot)|$ elements in the equivalence class of $a$, which proves the result.
\end{proof}

Recall that, for a finite irretractable bijective solution $(S,s)$, the set $S$ can be identified with the Cartesian product of $A$ and $G$ as in Lemma~\ref{lem:AandG}. 

\begin{pro}\label{pro:extension}
    Let $(S,s)$ be a finite irretractable bijective solution to the PE, let $X$ be a finite set, and let $\varphi_a$ be a permutation of $X$, for every $a \in A$. Then $s': (X\times S)\times (X\times S) \to (X\times S)\times (X\times S)$ defined by
    \begin{align*}
        s'((\alpha, a, x), (\beta, b, y)) = ((\alpha, a , xy), (\varphi^{-1}_{b(\delta_x(a))^{-1}}\varphi_b(\beta), \theta_{(a,x)}(b,y)))
    \end{align*}
    is a solution to the PE.  
\end{pro}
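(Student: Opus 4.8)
The plan is to apply the criterion of Lemma~\ref{lem:Kashaev}: writing $s'(u,v) = (u\cdot'v,\ \theta'_u(v))$, it suffices to verify the three identities \eqref{PE1}, \eqref{PE2}, \eqref{PE3} for the multiplication and the maps read off from the defining formula. Here the multiplication is $(\alpha,a,x)\cdot'(\beta,b,y) = (\alpha,a,xy)$, a left group in which only the $G$-coordinate multiplies while the $X$- and $A$-coordinates are inherited from the left factor, and
$$\theta'_{(\alpha,a,x)}(\beta,b,y) = \bigl(\varphi^{-1}_{b(\delta_x(a))^{-1}}\varphi_b(\beta),\ b(\delta_x(a))^{-1},\ \sigma_{\delta_x(a)}(y)\bigr),$$
so that forgetting the first coordinate recovers exactly the irretractable solution of Theorem~\ref{thm:matched}. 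Identity \eqref{PE1} is immediate from associativity in $G$.

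The guiding observation is that the verification splits into an ``$S=A\times G$ part'' and an ``$X$ part''. The projection $X\times S\to S$ dropping the first coordinate is compatible with $\cdot'$ and carries $\theta'$ to the map $\theta$ of the solution $(S,s)$ of Theorem~\ref{thm:matched}; hence the $A$- and $G$-coordinates of \eqref{PE2} and \eqref{PE3} for $s'$ are nothing but \eqref{PE2} and \eqref{PE3} for $s$, which were already checked in the proof of Theorem~\ref{thm:matched}. The step that makes the two layers decouple is the remark that the $X$-coordinate of $\theta'_u(v)$ is independent of the $X$-coordinate of the subscript $u$.

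What remains, and is the only genuinely new content, is the $X$-coordinate of \eqref{PE2} and \eqref{PE3}. For \eqref{PE2} it is trivial: under $\cdot'$ the $X$-coordinate of $\theta'_u(v)\cdot'\theta'_{uv}(w)$ is simply that of the left factor $\theta'_u(v)$, which already equals the $X$-coordinate of $\theta'_u(vw)$. The heart of the argument is the $X$-coordinate of \eqref{PE3}. Putting $u=(\alpha,a,x)$, $v=(\beta,b,y)$, $w=(\gamma,c,z)$, abbreviate $b'=b(\delta_x(a))^{-1}$, $y'=\sigma_{\delta_x(a)}(y)$ and $c''=c(\delta_{xy}(a))^{-1}$, so that $\theta'_{uv}(w)$ has $X$-coordinate $\gamma''=\varphi^{-1}_{c''}\varphi_c(\gamma)$. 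Then the outgoing $X$-coordinate of $\theta'_{\theta'_u(v)}\theta'_{uv}(w)$ equals $\varphi^{-1}_{c''(\delta_{y'}(b'))^{-1}}\varphi_{c''}(\gamma'')$, whereas the target $\theta'_v(w)$ has $X$-coordinate $\varphi^{-1}_{c(\delta_y(b))^{-1}}\varphi_c(\gamma)$.

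The key point is that the two subscripts $c''(\delta_{y'}(b'))^{-1}$ and $c(\delta_y(b))^{-1}$ coincide, since this is precisely the $A$-coordinate equality furnished by \eqref{PE3} for $s$. With that identification the claim reduces to $\varphi_{c''}(\gamma'')=\varphi_c(\gamma)$, which is immediate because $\gamma''=\varphi^{-1}_{c''}\varphi_c(\gamma)$ and the two copies of $\varphi_{c''}$ cancel. Thus the only obstacle is notational bookkeeping, and it dissolves once one notices that the exponent $b(\delta_x(a))^{-1}$ built into the lift is designed exactly so that this telescoping cancellation occurs; beyond the fact that each $\varphi_a$ is a permutation (so that $\varphi_{c''}\varphi^{-1}_{c''}=\id_X$), no further property of the maps $\varphi_a$ is used.
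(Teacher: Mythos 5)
Your argument is correct and follows essentially the same route as the paper: verify \eqref{PE1}--\eqref{PE3} via Lemma~\ref{lem:Kashaev}, note that the $A$- and $G$-coordinates reduce to the identities already established for the solution of Theorem~\ref{thm:matched}, and settle the $X$-coordinate of \eqref{PE3} by the telescoping cancellation of $\varphi_{c''}$ with $\varphi_{c''}^{-1}$. The only cosmetic difference is that you obtain the subscript identity $c''(\delta_{y'}(b'))^{-1}=c(\delta_y(b))^{-1}$ by citing the $A$-coordinate of \eqref{PE3} for $s$, where the paper rederives it directly from the matched-pair axiom \eqref{eq:deltaprod}; note also that the paper's proof additionally checks bijectivity of $s'$ (used later for the extension construction), which is not claimed in the statement and which you omit.
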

\begin{proof}
    By Theorem~\ref{thm:sol2mathc}, $s'$ can be written as follows
    \begin{align*}
         s'((\alpha, a, x), (\beta, b, y)) = ((\alpha, a , xy), (\varphi^{-1}_{b(\delta_x(a))^{-1}}\varphi_b(\beta), b(\delta_x(a))^{-1}, \sigma_{\delta_x(a)}(y))).
    \end{align*}
   Clearly, $X\times S$ has a structure of a left group with respect to the multiplication given by $(\alpha, a, x)(\beta, b, y)=(\alpha, a , xy)$ and equality~\eqref{PE1} is satisfied. 
    Hence, it remains to show equalities~\eqref{PE2} and \eqref{PE3}. We start with equality~\eqref{PE2}. Let $\alpha, \beta, \gamma \in X, a,b,c \in A$ and $x,y,z \in G$. Then, \begin{align*}
        \theta'_{(\alpha,a,x)}((\beta,b,y)(\gamma,c,z)) &= \theta'_{(\alpha,a,x)}(\beta,b,yz)\\ &=(\varphi_{b\delta_x(a)^{-1}}^{-1}\varphi_b(\beta),\theta_{(a,x)}(b,yz))\\
        &=(\varphi_{b\delta_x(a)^{-1}}^{-1}\varphi_b(\beta),\theta_{(a,x)}(b,y)\theta_{(a,xy)}(c,z)).
    \end{align*}
    On the other hand, one has that \begin{align*} &\theta'_{(\alpha,a,x)}(\beta,b,y) \theta'_{(\alpha,a,xy)}(\gamma,c,z)\\ =& (\varphi_{b\delta_x(a)^{-1}}^{-1} \varphi_b(\beta), \theta_{(a,x)}(b,y)) (\varphi_{c\delta_{xy}(a)^{-1}}^{-1}\varphi_c(\gamma),\theta_{(a,xy)}(c,z)) \\ =& (\varphi_{b\delta_x(a)^{-1}}^{-1}\varphi_b(\beta),\theta_{(a,x)}(b,y)\theta_{(a,xy)}(c,z)).
    \end{align*}
    As equality~\eqref{PE2} holds for $(S,s)$, it follows indeed that equality~\eqref{PE2} holds for $(X \times S,s')$. 
    It remains to prove equality~\eqref{PE3}. Note that, by the definition of $\theta'$, $$ \theta'_{\theta'_{(\alpha,a,x)}(\beta,b,y)}\theta'_{(\alpha,a,xy)}(\gamma,c,z) = \theta'_{(\zeta, \theta_{(a,x)}(b,y))}(\varphi_{c \delta_{xy}(a)^{-1}}^{-1}\varphi_c(\gamma), \theta_{(a,xy)}(c,z)),$$ 
    for some $\zeta \in X$. This in turn is equal to $$ (\varphi_{c\delta_{xy}(a)^{-1}\delta_{\sigma_{\delta_x(a)}(y)}(b\delta_x(a)^{-1})^{-1}}^{-1}\varphi_{c\delta_{xy}(a)^{-1}}\varphi_{c\delta_{xy}(a)^{-1}}^{-1}\varphi_c(\gamma),\theta_{\theta_{(a,x)}(b,y)}\theta_{(a,xy)}(c,z)).$$
    This can be simplified to $$ (\varphi_{c\delta_{xy}(a)^{-1}\delta_{\sigma_{\delta_x(a)}(y)}(b\delta_x(a)^{-1})^{-1}}^{-1}\varphi_c(\gamma),\theta_{\theta_{(a,x)}(b,y)}\theta_{(a,xy)}(c,z)).$$ Note that by equality~\eqref{eq:deltaprod}, one has that $$ \delta_y(b) = \delta_{\sigma_{\delta_x(a)}(y)}(b\delta_x(a)^{-1}) \delta_y\delta_x(a) = \delta_{\sigma_{\delta_x(a)}(y)}(b\delta_x(a)^{-1})\delta_{xy}(a). $$
    Applying this formula on the expression, one simplifies it to $$ \varphi_{c \delta_y(b)^{-1}}^{-1}\varphi_c(\gamma), \theta_{\theta_{(a,x)}(b,y)}\theta_{(a,xy)}(c,z)). $$ As $$\theta'_{(\beta,b,y)}(\gamma,c,z) = ( \varphi_{c\delta_y(b)^{-1}}^{-1}\varphi_c(\gamma),\theta_{(b,y)}(c,z)),$$ one finds that equality~\eqref{PE3} holds for $(X\times S,s')$ as it holds for $(S,s)$. Hence, $(X\times S,s')$ is a solution to the PE. Moreover, $(X\times S,s')$ is bijective. Indeed, let $(\alpha,a,x),(\beta,b,y)\in X \times S$ be given. Then we need to show there exist unique $(\gamma,c,z), (\zeta,d,w)\in X \times S$ such that $$s'((\gamma,c,z),(\zeta,d,w))=((\alpha,a,x),(\beta,b,y)).$$ One finds that this implies that $$ s((c,z),(d,w))=((a,x),(b,y)).$$ As $(S,s)$ is bijective, it is known that there exist such unique pairs $(c,z),(d,w) \in S$. By the definition of $s'$ it is clear that $\gamma=\alpha$. Finally, as the maps $\varphi$ are permutations and depend only on the pairs $(c,z),(d,w)$, it follows that there indeed exists a unique $\zeta \in X$ such that $$ s((\gamma,c,z),(\zeta,d,w)) = ((\alpha,a,x),(\beta,b,y)),$$ proving the result.
\end{proof}

The solution $(S',s')$ constructed in Proposition~\ref{pro:extension} will be called the \emph{extension of $(S, s)$} by $X$ and $\varphi=\{\varphi_a\colon a \in A\}$ and denoted by
$\operatorname{Ext}_X^{\varphi}(S,s)$.

\begin{pro}\label{pro:isoextensions}
    Let $(S,s)$ be a finite irretractable bijective solution to the PE corresponding to the matched pair $(A,G,\sigma,\delta)$. Let $X$ be a non-empty set. 
    Then, $\operatorname{Ext}_X^{\varphi}(S,s)$ is isomorphic to $\operatorname{Ext}_X^{\rho}(S,s)$, for any sets of bijections $\varphi$ and $\rho$.
\end{pro}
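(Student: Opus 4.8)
The plan is to exhibit an explicit isomorphism $f\colon X\times S\to X\times S$ between the two extensions that leaves the $A\times G$-coordinates untouched and only twists the $X$-coordinate by a permutation depending on the $A$-component. Concretely, I would set
\[
f(\alpha,a,x)=(\rho_a^{-1}\varphi_a(\alpha),a,x)
\]
for $\alpha\in X$, $a\in A$, $x\in G$. Since each $\rho_a^{-1}\varphi_a$ lies in $\Sym(X)$, the map $f$ is a bijection with inverse $(\beta,a,x)\mapsto(\varphi_a^{-1}\rho_a(\beta),a,x)$, so it remains only to verify that $f$ is a homomorphism of solutions, i.e. that $(f\times f)\,s'_\varphi=s'_\rho\,(f\times f)$, where $s'_\varphi=\operatorname{Ext}_X^{\varphi}(S,s)$ and $s'_\rho=\operatorname{Ext}_X^{\rho}(S,s)$ in the sense of Proposition~\ref{pro:extension}.

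For the verification I would abbreviate $\theta_{(a,x)}(b,y)=(b',y')$ with $b'=b(\delta_x(a))^{-1}$ and $y'=\sigma_{\delta_x(a)}(y)$, as these components are identical in both extensions. Because $f$ fixes the $A\times G$-part, both sides of the homomorphism identity agree on the $(a,xy)$- and $(b',y')$-coordinates automatically, and the first output slot matches immediately since its $X$-component $\alpha$ is merely carried along. The only nontrivial point lies in the $X$-component of the second output slot, where one must check
\[
\rho_{b'}^{-1}\varphi_{b'}\bigl(\varphi_{b'}^{-1}\varphi_b(\beta)\bigr)
=\rho_{b'}^{-1}\rho_b\bigl(\rho_b^{-1}\varphi_b(\beta)\bigr).
\]
Both sides telescope to $\rho_{b'}^{-1}\varphi_b(\beta)$, so the identity holds for every $\beta\in X$ and every admissible $a,b,x,y$, establishing the homomorphism condition.

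The only genuinely thoughtful step is guessing the correct form of $f$: the twist on $X$ must be allowed to depend on $a\in A$, since a single permutation independent of $a$ cannot cancel the $\varphi$-conjugation built into the second output slot of the extension. Once one observes that the composite $\rho_a^{-1}\varphi_a$ is precisely what converts the $\varphi$-twist appearing in $s'_\varphi$ into the $\rho$-twist appearing in $s'_\rho$ — the cancellations above being exactly this conversion — the remainder is the routine slot-by-slot comparison sketched here. This yields $\operatorname{Ext}_X^{\varphi}(S,s)\cong\operatorname{Ext}_X^{\rho}(S,s)$ for any families of bijections $\varphi$ and $\rho$, as claimed.
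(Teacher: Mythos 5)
Your proposal is correct and is essentially the paper's own proof: the paper defines the very same map $\xi(\alpha,a,x)=(\rho_a^{-1}\varphi_a(\alpha),a,x)$ and verifies the homomorphism identity by the identical telescoping computation $\rho_{b'}^{-1}\varphi_{b'}\varphi_{b'}^{-1}\varphi_b=\rho_{b'}^{-1}\varphi_b=\rho_{b'}^{-1}\rho_b\rho_b^{-1}\varphi_b$ with $b'=b(\delta_x(a))^{-1}$. Nothing further is needed.
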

\begin{proof}
    Define the map $\xi: \operatorname{Ext}_X^{\varphi}(S,s) \rightarrow \operatorname{Ext}_X^{\rho}(S,s)$ by $$\xi(\alpha,x,a)=(\rho_x^{-1}\varphi_x(\alpha),x,a).$$ Clearly, $\xi$ is a bijective  map. 
    Denote $\operatorname{Ext}_X^{\varphi}(S,s)=(X\times S,s_\varphi)$ and $\operatorname{Ext}_X^{\rho}(S,s)=(X\times S, s_{\rho})$. Then for any $(\alpha,a,x),(\beta,b,y) \in X \times S$, by the definition of $s_{\varphi},s_{\rho}$ and $\xi$, one has that
    \begin{align*}
        &(\xi \times \xi)s_{\varphi}((\alpha,a,x),(\beta,b,y))\\ 
        &= (\xi \times \xi) ((\alpha,a,xy),(\varphi_{b \delta_{x}(a)^{-1}}^{-1}\varphi_b(\beta),b(\delta_x(a))^{-1}, \sigma_{\delta_x(a)}(y)))\\
        &= ( (\rho_a^{-1}\varphi_a(\alpha),a,xy), (\rho_{b\delta_x(a)^{-1}}^{-1}\varphi_{b\delta_x(a)^{-1}}\varphi_{b\delta_x(a)^{-1}}^{-1}\varphi_b(\beta),b(\delta_x(a))^{-1}, \sigma_{\delta_x(a)}(y)))\\
        &= ( (\rho_a^{-1}\varphi_a(\alpha),a,xy), (\rho_{b\delta_x(a)^{-1}}^{-1}\varphi_b(\beta),b(\delta_x(a))^{-1}, \sigma_{\delta_x(a)}(y)))\\
        &= ( (\rho_a^{-1}\varphi_a(\alpha),a,xy), (\rho_{b\delta_x(a)^{-1}}^{-1}\rho_b \rho_b^{-1} \varphi_b(\beta),b(\delta_x(a))^{-1}, \sigma_{\delta_x(a)}(y)))\\
        &= s_{\rho}((\rho_a^{-1}\varphi_a(\alpha),a,x), (\rho_b^{-1} \varphi_b(\beta), b, y))\\
        &= s_{\rho}(\xi \times \xi) ((\alpha,a,x), (\beta,b,y))
    \end{align*}
    This shows that $\xi$ is an isomorphism of solutions.
\end{proof}

Hence, up to isomorphism there is only one extension for a given finite irretractable bijective solution $(S,s)$ and a non-empty set $X$.

Now we show that every finite bijective solution to the PE can be obtained as the extension of its retract. In view of Proposition~\ref{pro:isoextensions}, this is in a big contrast to the case of the retract operation on set-theoretic solutions to the Yang--Baxter Equation, see the discussion at the beginning of Section~\ref{sect3}. 

\begin{thm}  \label{thm:summary}
    Let $(S,s)$ be a finite bijective solution to the PE and let $(\overline{S},\overline{s})$ be its retract. Then $(S,s)$ is an extension of $(\overline{S}, \overline{s})$ by $X=\ker \theta \cap E(S,\cdot)$.
\end{thm}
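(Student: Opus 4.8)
The plan is to exhibit $(S,s)$ explicitly as $\operatorname{Ext}_X^{\varphi}(\overline S,\overline s)$ for a suitable family $\varphi=\{\varphi_a\colon a\in A\}$, where $\overline S$ is identified with $A\times G$ through Theorem~\ref{thm:sol2mathc}. First I would fix, for every class $a\in\overline E=A$, an idempotent representative $e_a\in E(S,\cdot)$ with $\pi(e_a)=(a,1)$ and $e_{1}=1$, and for $x\in G$ write $\gamma_x$ for the corresponding element of $G_1=1G$. By Lemma~\ref{lem:extviakern} every idempotent of $S$ is uniquely $\alpha\circ e_a$ with $\alpha\in X=\ker\theta\cap E(S,\cdot)$ and $a\in A$, and since $(S,\cdot)$ is a left group every element of $S$ is then uniquely $(\alpha\circ e_a)\gamma_x$. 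This yields a bijection $f\colon S\to X\times A\times G$, $(\alpha\circ e_a)\gamma_x\mapsto(\alpha,a,x)$, with $\mathrm{pr}_{A\times G}\circ f=\pi$.

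Since $\pi$ is a homomorphism of solutions onto the matched-pair retract, the first component of $s$ and the $A$- and $G$-components of its second component are, read through $f$, forced to coincide with those of the extension map: the product is $(\alpha,a,xy)$ and the $(A,G)$-part of the second coordinate is $\overline\theta_{(a,x)}(b,y)=(b(\delta_x(a))^{-1},\sigma_{\delta_x(a)}(y))$. Hence the whole problem is to compute the $X$-component of the second coordinate and identify it with $\varphi^{-1}_{b(\delta_x(a))^{-1}}\varphi_b(\beta)$.

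To this end I would first reduce to idempotents. As $\theta$ factors through $\pi$ one has $\theta_s=\theta_{e_a\gamma_x}$, and writing $t=(\beta\circ e_b)\gamma_y$, relation~\eqref{PE2} gives $\theta_s(t)=\theta_{e_a\gamma_x}(\beta\circ e_b)\cdot\theta_{(e_a\gamma_x)(\beta\circ e_b)}(\gamma_y)$. The first factor is an idempotent by Lemma~\ref{lem:idempotent}, and in the left group $(S,\cdot)$ a product $pq$ has the same $X$- and $A$-coordinates as $p$, so the $X$-component of $\theta_s(t)$ equals that of $\theta_{e_a\gamma_x}(\beta\circ e_b)$. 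The decisive point is the identity $\theta_{e_a\gamma_x}=\theta_{e_{\delta_x(a)}}$. Applying~\eqref{PE3} with $u=e_a\gamma_x$ and $v$ an idempotent (so that $uv=u$) yields $\Theta_{b(\delta_x(a))^{-1}}\,\theta_{e_a\gamma_x}=\Theta_b$, where $\Theta_c:=\theta_{e_c}$; the same argument applied to two idempotents gives $\Theta_{ba^{-1}}=\Theta_b\Theta_a^{-1}$, so that $c\mapsto\Theta_c$ is a homomorphism $A\to\Sym(S)$ and the previous relation collapses to $\theta_{e_a\gamma_x}=\Theta_{\delta_x(a)}$. Conceptually this records the fact that $\overline\theta_{(a,x)}$ depends on $(a,x)$ only through $\delta_x(a)$.

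It remains to understand the $X$-action of $\Theta_a=\theta_{e_a}$ on idempotents. Let $\lambda_{a,b}\in\Sym(X)$ denote the permutation induced by $\Theta_a$ from the fibre of idempotents over $b$ to the fibre over $ba^{-1}$. Reading $\Theta_{ba^{-1}}=\Theta_b\Theta_a^{-1}$ on $X$-coordinates gives the cocycle identity $\lambda_{ba^{-1},d}=\lambda_{b,da}\lambda_{a,da}^{-1}$; specialising to $b=da$ and reparametrising shows $\lambda_{a,b}=\varphi^{-1}_{ba^{-1}}\varphi_b$ once we set $\varphi_a:=\lambda_{a,a}$ (so that $\varphi_{1}=\id$, since $\theta_{e_{1}}=\theta_1=\id$). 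Substituting $a\mapsto\delta_x(a)$ gives $\lambda_{\delta_x(a),b}=\varphi^{-1}_{b(\delta_x(a))^{-1}}\varphi_b$, and therefore the $X$-component of $\theta_s(t)$ is $\varphi^{-1}_{b(\delta_x(a))^{-1}}\varphi_b(\beta)$, exactly as required. Thus $f$ intertwines $s$ with the extension map attached to this $\varphi$, proving $(S,s)\cong\operatorname{Ext}_X^{\varphi}(\overline S,\overline s)$; by Proposition~\ref{pro:isoextensions} the particular choice of $\varphi$ is irrelevant up to isomorphism. I expect the main obstacle to be precisely the decoupling of the $X$- and $G$-directions, that is, the identity $\theta_{e_a\gamma_x}=\theta_{e_{\delta_x(a)}}$; once the homomorphism property of $\Theta$ and the resulting coboundary form of $\lambda$ are established, everything else is routine verification.
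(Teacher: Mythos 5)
Your proof is correct and follows essentially the same route as the paper's: identify $S$ with $X\times A\times G$ via Lemma~\ref{lem:extviakern}, use the fact that passing to the retract is a morphism of solutions to pin down the $A\times G$ components, use \eqref{PE2} and \eqref{PE3} to show the $X$-component is governed by permutations $\lambda_{\delta_x(a),b}$ depending only on $\delta_x(a)$ and $b$ (the paper's $\eta_{\delta_x(a),b}$), and solve the resulting functional equation as a coboundary $\varphi^{-1}_{b(\delta_x(a))^{-1}}\varphi_b$. The only cosmetic difference is that you package the decoupling step as the homomorphism property of $\Theta\colon A\to\Sym(S)$ and read the cocycle identity off it, whereas the paper derives the identity $\eta_{ba^{-1},ca^{-1}}\eta_{a^{-1},c}=\eta_{b,c}$ by a direct \eqref{PE3} computation and solves it with a slightly different specialization; the substance is identical.
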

\begin{proof}
    Note that, by Lemma~\ref{lem:extviakern}, one can identify the set $S$ with $X \times A \times G$, where the decomposition $\overline{S}=A\times G$ with matched pair of groups $(A,G,\sigma,\delta)$ is such as in Theorem~\ref{thm:sol2mathc} and $X$ is the set $\ker\theta \cap E(S,\cdot)$. Let $\alpha, \beta \in X, a,b \in A$ and $x,y \in G$. Then there exist $\gamma \in X, c \in A$ and $z \in G$ such that $$ \theta_{(\alpha, a, x)}(\beta,b,y)=(\gamma,c,z).$$ As taking the retract of a solution is a morphism of solutions, one finds that $(c,z) = \theta_{(a,x)}(b,y)$. Hence, $$ \theta_{(\alpha,a,x)}(\beta,b,y)=(\gamma,\theta_{(a,x)}(b,y)).$$ As $\theta_{(\alpha,a,x)} = \theta_{(\alpha',a,x)}$ for all $\alpha' \in X$, the element $\gamma$ is independent of $\alpha$. Moreover, by equality~\eqref{PE2}, one has that $$ \theta_{(\alpha,a,x)}(\beta,b,y) = \theta_{(\alpha,a,x)}(\beta,b,1)\theta_{(\alpha,a,x)}(\beta,b,y),$$ which shows that $\gamma$ is independent of $y$. Finally, notice that $$\theta_{(\alpha,a,x)}=\theta_{\theta_{(\alpha, a,x)}(\alpha,1,1)^{-1}}= \theta_{(\alpha',\delta_{x}(a),1)}.$$ Thus, $\gamma$ depends on $\delta_x(a)$ instead of on $a$ and $x$ separately. Hence, denote $$ \theta_{(\alpha,a,x)}(\beta,b,y)=(\eta_{\delta_x(a),b}(\beta),\theta_{(a,x)}(b,y)).$$ Note that the maps $\eta$ are bijective, as $s$ is bijective. We examine equality~\eqref{PE3} to obtain conditions on the maps $\eta$. Let $\alpha,\beta,\gamma \in X, a,b,c \in A$ and $x,y,z \in G$. Then for some $\zeta \in X$ one has \begin{align*}
    &\theta_{\theta_{(\alpha,a,x)}(\beta,b,y)} \theta_{(\alpha,a,xy)}(\gamma,c,z)\\ =& \theta_{(\zeta,b\delta_x(a)^{-1},\sigma_{\delta_x(a)}(y))}(\eta_{\delta_{xy}(a),c}(\gamma),c\delta_{xy}(a)^{-1},\sigma_{\delta_{xy}(a)}(z))\\
    =& (\eta_{\delta_{\sigma_{\delta_x(a)}(y)}(b\delta_x(a)^{-1}),c\delta_{xy}(a)^{-1}}\eta_{c\delta_{xy}(a)^{-1}}(\gamma), d,w)\\
    =&(\eta_{\delta_y(b)\delta_{xy}(a)^{-1}, c\delta_{xy}(a)^{-1}}\eta_{\delta_{xy}(a),c}(\gamma), d,w),
    \end{align*}
    with $d \in A$ and $w\in G$. Calculating the right hand side of equality~\eqref{PE3} and equating its first component with the first component in the previous calculation, one finds the equality $$ \eta_{\delta_y(b)\delta_{xy}(a)^{-1}, c\delta_{xy}(a)^{-1}}\eta_{\delta_{xy}(a)^{-1},c} = \eta_{\delta_y(b),c}.$$ Note that as $\delta_{xy}$ and $\delta_y$ are bijective maps, this can be rewritten such that for any $a,b,c \in A$ one has that  $$  \eta_{ba^{-1},ca^{-1}}\eta_{a^{-1},c} = \eta_{b,c}. $$ Note that $\eta_{1,c}(\gamma) = \gamma$ as this corresponds to $ \theta_{(\alpha,1,1)}(\gamma,c,1)=(\gamma,c,1)$. Moreover, if we let $a=c$, then we see that 
    $$\eta_{bc^{-1},1}\eta_{c,c}=\eta_{b,c}.$$ Applying this equality for $b=1$, we find that $$\eta_{c^{-1},1}\eta_{c,c}=\eta_{1,c}=\id_X.$$ 
    Hence, $$\eta_{b,c}=\eta_{bc^{-1},1}\eta_{c,c}=\eta_{bc^{-1},1}\eta_{c^{-1},1}^{-1}.$$ 
    Denote $\varphi_t=\eta_{t^{-1},1}^{-1}$. Then, one reformulates the equality to $$ \eta_{b,c}= \varphi_{cb^{-1}}^{-1} \varphi_{c}.$$ Applying this in the formula for $\theta$, we obtain that $$\theta_{(\alpha,a,x)}(\beta,b,y) = ((\varphi_{b\delta_x(a)^{-1}}^{-1}\varphi_b(\beta),\theta_{(a,x)}(b,y)).$$ Thus, we have shown that every finite bijective solution can be written as an extension of its retract as in Proposition~\ref{pro:extension}.
\end{proof}

In summary, one obtains our main result which is the following complete characterization of finite bijective solutions to the PE.  
\begin{thm}\label{thm:iso}
    Let $(A,G,\sigma,\delta)$ be a matched pair of groups. Then, for any non-empty set $X$ and set of permutations $\left\lbrace \varphi_a \mid a \in A\right\rbrace$, the following defines a bijective solution to the PE on $X\times A \times G$, $$ s((\alpha,a,x),(\beta,b,y))= ((\alpha, a , xy), (\varphi^{-1}_{b(\delta_x(a))^{-1}}\varphi_b(\beta), b(\delta_x(a))^{-1}, \sigma_{\delta_x(a)}(y))).$$ Vice versa, every finite bijective solution $(S,s)$ to the PE is such a solution, where the corresponding matched pair $(A,G,\sigma,\delta)$ is determined by the retract of $(S,s)$, as in Theorem~\ref{thm:sol2mathc}.
\end{thm}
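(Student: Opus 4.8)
The plan is to assemble the statement from the machinery already developed, the only genuinely new ingredient being the isomorphism classification. For the existence part, given a matched pair $(A,G,\sigma,\delta)$, Theorem~\ref{thm:matched} produces the irretractable bijective solution on $A\times G$ whose $\theta$-map is $\theta_{(a,x)}(b,y)=(b(\delta_x(a))^{-1},\sigma_{\delta_x(a)}(y))$; applying the extension construction of Proposition~\ref{pro:extension} to this solution with the set $X$ and the permutations $\{\varphi_a\}$ yields precisely the displayed formula, which is therefore a bijective solution to the PE. For the converse, let $(S,s)$ be any finite bijective solution and pass to its retract $(\overline S,\overline s)$. The retract is irretractable, so by Theorem~\ref{thm:sol2mathc} it is isomorphic to the solution attached to a matched pair $(A,G,\sigma,\delta)$, and by Theorem~\ref{thm:summary} the solution $(S,s)$ is the extension $\operatorname{Ext}_X^{\varphi}(\overline S,\overline s)$ with $X=\ker\theta\cap E(S,\cdot)$. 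This exhibits $(S,s)$ in the required form.

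For the isomorphism statement I would argue both implications. First suppose the matched pairs $(A,G,\sigma,\delta)$ and $(A',G',\sigma',\delta')$ are isomorphic and $|X|=|X'|$. The proposition showing that isomorphic matched pairs yield isomorphic irretractable solutions gives that the two associated irretractable solutions are isomorphic, and by Proposition~\ref{pro:isoextensions} the isomorphism type of an extension is independent of the chosen families $\varphi,\rho$; it then remains to check that an isomorphism $g$ of the retracts together with any bijection $X\to X'$ assembles into an isomorphism of the trivial extensions (all $\varphi_a=\id$), which is immediate since there the $X$-coordinate is merely carried along. Combining these three facts yields the isomorphism of the two solutions.

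Conversely, suppose $(S,s)\cong(S',s')$ via $f$. The key step is that the retract is functorial. An isomorphism of solutions is in particular a semigroup isomorphism satisfying $f(\theta_x(y))=\theta'_{f(x)}(f(y))$, so $\theta_x=\theta_y$ forces $\theta'_{f(x)}=\theta'_{f(y)}$ by bijectivity of $f$; moreover any isomorphism of the underlying left groups $E\times G\to E'\times G'$ splits as the product of a bijection $E\to E'$ and a group isomorphism $G\to G'$, whence $f$ preserves the $G$-component. Thus $f$ respects the defining conditions of the congruence $\approx$ and descends to an isomorphism $\overline f\colon\overline S\to\overline{S'}$ of retracts; by the proposition showing that isomorphic irretractable solutions have isomorphic matched pairs, the matched pairs attached to the two retracts are then isomorphic. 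Finally $|X|$ is an invariant: from Lemma~\ref{lem:extviakern} one has $|S|=|X|\,|\overline S|$, and the equalities $|S|=|S'|$ together with $|\overline S|=|\overline{S'}|$ give $|X|=|X'|$. I expect the functoriality of the retract — confirming that an arbitrary solution isomorphism respects both the equality of $\theta$-maps and the $G$-component defining $\approx$ — to be the main point requiring care, the remaining assertions being direct consequences of the cited results.
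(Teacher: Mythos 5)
Your proposal is correct and follows the same route the paper intends: the theorem is stated there as a summary, obtained by combining Theorem~\ref{thm:matched}, Proposition~\ref{pro:extension}, Theorem~\ref{thm:sol2mathc}, Theorem~\ref{thm:summary}, Proposition~\ref{pro:isoextensions} and the two propositions on isomorphisms of matched pairs, exactly as you do. Your extra care about functoriality of the retract (that a solution isomorphism splits as $f_1\times f_2$ on $E\times G$ and hence preserves both the $G$-component and equality of $\theta$-maps, so it descends to the quotient by $\approx$) is the one step the paper leaves implicit, and your argument for it is sound.
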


We conclude this section with an explicit description of isomorphism classes of finite bijective solutions to the PE. 

\begin{cor}\label{cor:classif-iso}
    Let $(S,s)$ and $(S',s')$ be finite bijective solutions to the PE.
    Let $(A,G,\sigma,\delta)$ and $(A',G',\sigma',\delta')$ be the matched pairs determined
    by the retracts of $(S,s)$ and $(S',s')$, respectively, and let $X,X'$ be the corresponding extension sets in Theorem~\ref{thm:iso}.
    Then the following are equivalent:
    \begin{enumerate}
        \item $(S,s)\cong (S',s')$ as set-theoretic solutions to the PE,
        \item $|X|=|X'|$ and $(A,G,\sigma,\delta)\cong (A',G',\sigma',\delta')$ as matched pairs.
    \end{enumerate}
    In particular, the isomorphism classes of finite bijective solutions to the PE are in a bijection with pairs consisting of an isomorphism class of matched
    pairs of groups and a positive integer.
\end{cor}
\begin{proof}
    By Theorem~\ref{thm:iso}, any finite bijective solution is isomorphic to one of the form $X\times A\times G$ with $X$ a set and $(A,G)$ a matched pair of groups determined by its retract. Its isomorphism class is determined uniquely by the isomorphism class of the matched product, Proposition~\ref{pro:isoirr}, and the cardinality of $X$, Proposition~\ref{pro:isoextensions}.
\end{proof}

\section{Concluding remarks and examples}   \label{comments}

We first show that, as a direct consequence of Proposition~\ref{pro:isoextensions} and Theorem~\ref{thm:summary}, one recovers the classification of involutive solutions obtained in \cite{MR4170296}, in the finite case.

Let $(S,s)$ be a finite bijective solution to the PE such that $s^{2}=\id$. Then, by Theorem~\ref{thm:summary}, $(S,s)$ is an extension of its retract $(\overline{S},\overline{s})$. Since $s^{2}=\id$ then also $\overline{s}^{2}=\id$. 
By Theorem~\ref{thm:sol2mathc} we have
$$\overline{s}^2 ((a,x),(b,y)) = ((a,xy\sigma_{\delta_{x}(a)}(y)),
(b(\delta_{x}(a))^{-1} (\delta_{xy}(a))^{-1},
\sigma _{\delta_{xy}(a)}\sigma_{\delta_{x}(a)}(y))).$$ 
Hence, $y\sigma_{\delta_{x}(a)}(y)=1$. Since each $\delta_x$ is bijective,  it follows that $\sigma_{c}(y) = y^{-1}$, for every $y\in G$ and $c\in A$. Moreover, we obtain $(\delta_{x}(a))^{-1} (\delta_{xy}(a))^{-1}=1$, for all $x,y\in G$ and $a\in A$. Therefore $\delta_{x} = \delta_z$ for all $x,z\in G$ and $ (\delta_{x}(a))^{2} =1$. Consequently, $A$ is an elementary abelian $2$-group. 
Since $\sigma$ is a left action of $A$ on $G$, we have $\sigma_{ab} = \sigma_a \sigma_b$ and hence $y=y^{-1}$ for every $y\in G$ and $\sigma_a =\id $ for every $a\in A$. 
In particular, $G$ is an elementary abelian $2$-group. 
Since $\delta$ is a right action of $G$ on $A$, we get that $\delta_{xy} = \delta_y \delta_x$, 
so that $\delta_x =\id $ for every $x\in G$. 
Therefore  
$G\times A$ is an ordinary direct product of groups.
Conversely, it is easy to see that if $G$ and $A$ are elementary abelian $2$-groups, $\sigma_a=\id$ for every $a\in A$, and $\delta_x=\id$  for every $x\in G$, then conditions  \eqref{eq:sigmaprod} and \eqref{eq:deltaprod} are satisfied and the solution defined in Theorem~\ref{thm:matched} is involutive. So, in view of Proposition~\ref{pro:isoextensions} and Theorem~\ref{thm:summary}, we recover the main result of \cite{MR4170296}.

We next describe several sources of matched products of groups, and hence of explicit irretractable
bijective solutions to the PE via Theorem~\ref{thm:matched}. We start with a basic construction (the semidirect product) and
then discuss examples arising from skew braces via holomorph/permutation-group methods.

\begin{exa}
    Let $G$ and $A$ be groups and assume $A$ acts on $G$ by automorphisms, equivalently we are given a group homomorphism $\sigma:A \to \operatorname{Aut}(G)$. Define a right action $\delta \colon G \to \Sym(A)$ to be trivial, i.e.\ $\delta_x=\id_A$ for all $x\in G$. Then $(G,A,\sigma,\delta)$ is a matched pair of groups: condition~\eqref{eq:sigmaprod} reduces to $\sigma_a(xy)=\sigma_a(x)\sigma_a(y)$, i.e. $\sigma_a \in \operatorname{Aut}(G)$ and condition~\eqref{eq:deltaprod} is trivially satisfied. The corresponding irretractable solution according to Theorem~\ref{thm:iso} is $s:(A\times G)^2\to (A\times G)^2$ defined by
    \begin{align*}
        s((a,x),(b,y)) = ((a,xy),(ba^{-1}, \sigma_{a}(y))).
    \end{align*}
    In this case, our matched product of groups can be identified with the inner semidirect product $G\rtimes A$ of its normal subgroup $G$ and a subgroup $A$ and $\sigma_a (x) = axa^{-1}$ for $a\in A, x\in G$.
\end{exa}

More generally, exact factorizations provide one of the most concrete sources of matched pairs.
Recall that a group $S$ \emph{factorizes} through subgroups $G$ and $A$ if $S=GA$,
and the factorization is \emph{exact} if moreover $G\cap A=\{1\}$ (equivalently:
every $s\in S$ has a unique decomposition $s=xa$ with $x\in G$, $a\in A$).
In this situation one defines mutual actions by the unique decomposition
\begin{align*}
    a x = (a \triangleright x)\,(a \triangleleft x),
\qquad a\in A,\ x\in G,
\end{align*}
with $a\triangleright x\in G$ and $a\triangleleft x\in A$. Then $(G, A, \triangleleft, \triangleright)$ defines a matched pair of groups and the resulting matched product
$G \bowtie A$ recovers $S$.

Clearly, there are many matched pairs of groups that do not come from (semi)direct
products, in the sense that neither factor is normal.  For example, the alternating group $A_5$ admits an exact factorization $A_5 = AG$ with $G\cong C_5$ and $A\cong A_4$. In this case, both actions $\sigma$ and $\delta$ are nontrivial.

A particularly rich and well-studied source of such structures comes from skew braces, which are
central in the theory of set-theoretic solutions to the Yang--Baxter Equation: given a skew brace one
obtains a (non-degenerate) set-theoretic solution, and conversely every non-degenerate such solution
yields a skew brace \cite{GuVe2017} (see also the earlier group-with-braiding approach of
Lu--Yan--Zhu~\cite{Lu2000}).
Braces, and later skew braces, were introduced by Rump~\cite{zbMATH05118810}
and by Guarnieri--Vendramin~\cite{GuVe2017}, and they have played a central role in the subject; see,
for instance, \cite{zbMATH07408036,CJO2014, GI2018, zbMATH07075038} and the
recent survey~\cite{zbMATH07919707} and references therein.
Moreover, a skew brace canonically determines a matched pair of groups in which the multiplicative
group acts on itself from the left and from the right \cite[Lemma~5.9]{MR3763907}. By
Theorem~\ref{thm:iso}, this matched pair also produces an irretractable set-theoretic solution to the PE.
This highlights an interesting connection between
set-theoretic solutions to the Yang--Baxter Equation and to the PE; some connections between the two
equations in their general form have already been pointed out in \cite{MR3957155,MR1273649}. 

\begin{exa}\label{ex:skewbraces}
    Let $(B,+,\circ)$ be a skew brace \cite{GuVe2017}. 
    Recall that we can define a left action $\lambda: (B,\circ) \to \Aut(B,+)$ by $\lambda_a(b)=-a+a\circ b$ and a right action $\rho: (B,\circ)\to \Sym(B)$ by $\rho_b(a)=(\lambda_a(b))'\circ a \circ b$.
    It is known that $(B,\circ)$ with respect to the actions 
    $\lambda$ and $\rho$ forms a matched pair of groups, see for instance \cite[Lemma 5.9]{MR3763907}.
    Therefore, any skew brace yields an irretractable bijective solution $(B\times B, s)$ to the PE such that
    \begin{align*}
        s((a,x),(b,y)) &= ((a, x\circ y),(b\circ (\rho_x(a))', \lambda_{\rho_x(a)}(y)))\\
        &=((a, x\circ y),(b\circ (x'\circ a- x'),(a'+x)'\circ(a'+y))).
    \end{align*}
\end{exa}

Skew braces can also be described in terms of regular subgroups of holomorphs. Fix an additive group
$(B,+)$. Then skew brace structures on $B$ correspond to regular subgroups of the holomorph
$\Hol(B,+)=(B,+)\rtimes \Aut(B,+)$ up to conjugacy; see, for instance,
\cite{GuVe2017,zbMATH07224504}. Each such regular subgroup determines a skew brace, and
Example~\ref{ex:skewbraces} then yields an explicit irretractable solution to the PE.

This holomorph perspective extends further to the notion of skew bracoids. Let $H$ be a group and let
$G\le \Hol(H)$ act transitively on the underlying set of $H$ via the natural holomorph action. Following
\cite{221207361}, the pair $(G,H)$ is called a \emph{skew bracoid} if this action is transitive; it is
said to \emph{contain a brace} if $G$ contains a regular subgroup of $\Hol(H)$. If $(G,H)$ is a skew
bracoid containing a brace, then $G$ admits an exact factorization $G=HS$, where $S=\Stab_G(1)$, which
yields a matched pair between $H$ and $S$ \cite[Proposition~2.2]{221207361}. Consequently, skew bracoids
containing a brace provide further examples of matched pairs and, via Theorem~\ref{thm:iso}, of
irretractable bijective solutions to the PE.

This also creates a bridge to Hopf--Galois theory. By the Greither--Pareigis theorem, Hopf--Galois
structures on a finite separable field extension correspond to transitive subgroups of a suitable
permutation group. Byott's reformulation \cite{zbMATH01014452} expresses this in terms of transitive
subgroups inside suitable holomorphs, reducing classification to the study of transitive holomorph
subgroups. In this context, algorithms that enumerate transitive subgroups have been developed; see for
instance \cite{Da25x,zbMATH06607874}. These computational outputs therefore also provide effective input
for constructing matched pairs and hence irretractable solutions to the PE via Theorem~\ref{thm:iso}.

We conclude by exploring the relation between set-theoretic solutions to the PE and Hopf algebras with
positive bases. Recently, the first author and Janssens \cite{CJ26x} investigated the relationship
between bijective set-theoretic solutions to the PE and Hopf algebras, making explicit at the
set-theoretic level a connection originating in work of Militaru \cite{Mi04} and Davydov \cite{Dav}.
In particular, \cite{CJ26x} proves that any finite set-theoretic solution gives rise to a Hopf algebra
with the so called positive basis property. Even if this is shown without using the full strength of
the classification in Theorem~\ref{main}, the argument relies on Theorem~\ref{thm:leftgroup} where we
prove that the semigroup $S$ associated to a set-theoretic solution to the PE is a left group. In
addition, \cite{CJ26x} extends the Lu--Yan--Zhu theory of Hopf algebras with positive bases
\cite{zbMATH01616308} to arbitrary fields of characteristic~$0$ (see \cite[Corollary~B]{CJ26x}).

It is then natural to ask whether the Lu--Yan--Zhu classification \cite{zbMATH01616308} could provide
an alternative proof of Theorem~\ref{main}. As explained in the remark following
\cite[Corollary~B]{CJ26x} this approach does not directly recover the set-theoretic classification. The
obstruction is twofold: first, the Hopf algebra decomposition obtained via the Fundamental Theorem of
Hopf modules does not preserve the canonical bases; second, the groups arising in the resulting
factorization appear only after a non-explicit rescaling procedure. Overcoming these obstacles would
require an explicit description of all set-theoretic bases of the relevant bicrossed product Hopf
algebra.

\section*{Acknowledgments}

Colazzo acknowledges support from the Engineering and Physical Sciences Research Council [grant number EP/V005995/1] during the early stages of this work.
The authors wish to thank Leandro Vendramin for stimulating discussion and computer calculations that started during the ``Algebra of the Yang--Baxter equation" conference in Bedlewo in 2022, from which this project originated.

\printbibliography

\end{document}